\documentclass[11pt]{amsart}
\usepackage{amsmath,amsthm,amsfonts,amssymb,
graphicx,amscd,float,hyperref,enumitem,setspace, marginnote}
\usepackage[margin=1.45in]{geometry}
\usepackage{comment}
\usepackage{graphicx,xypic}

\newtheorem{thm}{Theorem}[section]
\newtheorem{theorem}{Theorem}[section]

\newtheorem{lem}[thm]{Lemma}
\newtheorem{lemma}[thm]{Lemma}

\newtheorem{cor}[thm]{Corollary}
\newtheorem{corollary}[thm]{Corollary}

\newtheorem{proposition}[thm]{Proposition}

\theoremstyle{definition}
\newtheorem{rk}[thm]{Remark}
\newtheorem{conv}[thm]{Convention}
\newtheorem{df}[thm]{Definition}

  \newcommand{\R}{\mathbb{R}}

  \newcommand{\Z}{\mathbb{Z}}

\newcommand{\vphi}{\varphi}

\newcommand{\wh}{\widehat}

\newcommand{\out}{\textup{Out}(F_r)}

\newcommand{\norm}[1]{|#1|}
\DeclareMathOperator{\Out}{Out}

\newcommand{\os}{\rm{CV}}
\newcommand{\cv}{\os}

\newcommand{\FF}{\mathcal{FF}}
\newcommand{\val}{{\rm{val}}}
\newcommand{\vol}{\mathrm{vol}}

\newcommand{\ds}{d_{\mathrm{sym}}}

\newcommand{\dL}{d_{\rm{CV}}}


\newcommand{\ind}{\mbox{ind}}

\newcommand{\supp}{\mathrm{Supp}}
\newcommand{\sm}{\langle \supp(\mu)\rangle_+}

\makeatletter
\@namedef{subjclassname@2020}{%
	\textup{2020} Mathematics Subject Classification}
\makeatother

\begin{document}

\title[Random trees are trivalent]{Random trees in the boundary of Outer space}

\author{Ilya Kapovich, Joseph Maher, Catherine Pfaff, and Samuel J. Taylor}

\address{Department of Mathematics and Statistics, Hunter College of CUNY\newline
  \indent 695 Park Ave, New York, NY 10065
  \newline \indent  {\url{http://math.hunter.cuny.edu/ilyakapo/}}, }
  \email{\tt ik535@hunter.cuny.edu}

\address{ CUNY College of Staten Island and CUNY Graduate Center \newline
  \indent
2800 Victory Boulevard, Staten Island, NY 10314
  \newline
  \indent  {\url{http://www.math.csi.cuny.edu/~maher/}}, }
  \email{\tt joseph.maher@csi.cuny.edu}

\address{Department of Math \& Stats, Queen's University \newline
  \indent
Jeffery Hall, 48 University Ave.,
Kingston, ON Canada, K7L 3N6
  \newline
  \indent  {\url{https://mast.queensu.ca/~cpfaff/}}, }
  \email{\tt c.pfaff@queensu.ca}

\address{Department of Mathematics, Temple University \newline
  \indent
1805 Broad St, Philadelphia, PA 19122
  \newline
  \indent  {\url{https://math.temple.edu/~samuel.taylor/}}, }
  \email{\tt samuel.taylor@temple.edu}

\subjclass[2020]{Primary 20F65, Secondary 57M, 37B, 37D}

\begin{abstract}
We prove that for the harmonic measure associated to a random
walk on $\out$ satisfying some mild conditions, a  typical tree
in the boundary of Outer space is trivalent and
nongeometric.
This result answers a question of Mladen Bestvina.
\end{abstract}

\keywords{Free group, random walk, Outer space, free group automorphisms, train track maps}

\maketitle

\section{Introduction} \label{sec:introduction}
As a means to study the outer automorphism group $\out$,  Culler and Vogtmann \cite{cv86} introduced Outer space $\cv_r$ as the deformation space of marked metric $F_r$-graphs.
Outer space is naturally equipped with a boundary $\partial \cv_r$ whose points are represented by actions of $F_r$ on the class of `very small' $\mathbb{R}$-trees  \cite{cl95, bf94}. Since its introduction, $\partial \cv_r$ has attracted much of its own attention and plays a role similar to that of Thurston's boundary of Teichm\"uller space.

Since a point of $\partial \cv_r$ is the homothety class $[T]$ of an $\mathbb{R}$-tree $T$, one can study its basic properties as such. For example, each $p \in T$ separates $T$, and the number of its complementary components is the \emph{valency} of $p$. We call $T$ \emph{trivalent} if each of its branch-points (i.e. points of valency at least $3$) is $3$-valent. Similarly, one can also consider the manner in which $T$ arises as an $F_r$-tree; $T$ is called \emph{geometric} if it is dual to a measured foliation on a $2$-complex whose fundamental group is $F_r$. As a point of reference, \emph{all} of the $\mathbb{R}$-trees that arise in Thurston's boundary of the Teichm\"uller space are geometric since they are dual to singular measured foliations on the underlying surface. Moreover, in that setting, the valencies of the branch-points correspond to the degrees of the singularities on the surface.

In this paper we develop a complete understanding of these two properties for a ``random'' tree in $\partial \cv_r$.
As a significant point of contrast to the surface case, we find that such a random tree of $\partial \cv_r$ is \emph{not} geometric.

For this, let $(w_n)_{n\ge 1}$ be the random walk on $\out$ determined by a nonelementary measure $\mu$ on $\out$. By combining work of Horbez \cite{horbez2016poisson}  and Namazi--Pettet--Reynolds \cite{namazi2014ergodic},
we recall that the random walk
induces a naturally associated \emph{hitting} or \emph{exit} measure $\nu$ on $\partial \cv_r$ and that $\nu$ is the unique $\mu$-stationary probability measure on $\partial \cv_r$.  Moreover, $\nu$ gives full measure to the subspace of trees in $\partial \cv_r$ which are free, arational, and uniquely ergodic.
We refer the reader to Section \ref{sec:background} for the relevant background. Our main theorem is the following:

\begin{theorem} \label{th:main}
Let $r\ge 3$ and let $\mu$ be a nonelementary probability measure on $\out$ with finite support such that
the semigroup generated by the support of $\mu$ contains
$\varphi^{-1} $ for some
principal fully irreducible $\varphi \in \out$.

Then for $\nu$- almost every $[T]\in \partial \cv_r$, the tree $T$ is trivalent and nongeometric.
\end{theorem}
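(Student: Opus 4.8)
The plan is to leverage the train-track theory of fully irreducible automorphisms together with the structure of the stationary measure $\nu$. The key starting point is that $\nu$ is the unique $\mu$-stationary measure on $\partial\cv_r$ and is supported on free, arational, uniquely ergodic trees; moreover the hypothesis that the semigroup generated by $\supp(\mu)$ contains $\vphi^{-1}$ for a principal fully irreducible $\vphi$ should let us relate $\nu$ to the attracting fixed tree $T_{\vphi^{-1}} = T_{\vphi}^-$ in $\partial\cv_r$ (the repelling tree of $\vphi$). Recall that ``principal'' means $\vphi$ admits a train track representative whose Whitehead graphs are connected with no cut vertices, equivalently the index list of $\vphi$ (or of its inverse) realizes the rank in the sharpest possible way and every branch point of the limit tree is trivalent with trivial stabilizer; in particular $T_{\vphi}^{\pm}$ is a free, arational, trivalent, \emph{nongeometric} tree (the nongeometricity is detected by the geometric index being strictly less than the combinatorial bound $2r-2$, while trivalence forces the combinatorial index to be $2r-2$ minus the number of branch points, an incompatibility with geometricity exactly as in the Gaboriai–Levitt index inequality). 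So the model tree $T := T_{\vphi}^-$ has the desired properties, and the task is to propagate them to $\nu$-a.e.\ tree.

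Next I would set up the two relevant Borel subsets of $\partial\cv_r$:
\[
\mathcal{T}_{\mathrm{triv}} = \{[T] : T \text{ trivalent}\}, \qquad \mathcal{T}_{\mathrm{ng}} = \{[T] : T \text{ nongeometric}\}.
\]
One must check these are Borel (valency and the geometric index are upper/lower semicontinuous in appropriate senses on the space of very small trees, so this is routine but needs to be said) and, crucially, that both are $\out$-invariant: valency of branch points and geometricity are preserved under the $\out$-action by precomposition with a homeomorphism of markings. Since $\nu$ is $\mu$-stationary and $\mu$ is nonelementary, the action of the semigroup $\sm = \langle\supp(\mu)\rangle_+$ on $(\partial\cv_r,\nu)$ is ergodic in the sense that any $\sm$-invariant Borel set has $\nu$-measure $0$ or $1$ (this follows from stationarity plus the fact that $\nu$ is the unique stationary measure — a $\sm$-invariant set of positive measure would yield a distinct stationary measure by conditioning). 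Hence it suffices to show $\nu(\mathcal{T}_{\mathrm{triv}}) > 0$ and $\nu(\mathcal{T}_{\mathrm{ng}}) > 0$.

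To get positivity, I would use the north-south dynamics of $\vphi$ (equivalently $\vphi^{-1}$) on $\overline{\cv}_r$: since $\vphi^{-1} \in \sm$, iterating $\vphi^{-1}$ pushes $\nu$ toward the attracting tree $T_{\vphi^{-1}} = T_{\vphi}^-$ on a large portion of $\partial\cv_r$. More precisely, by stationarity $\nu = \sum_n \mu^{*n}(\{g\})\, g_*\nu$-type averages put positive mass near any tree of the form $g\cdot T_{\vphi}^-$ with $g \in \sm$, and because $T_{\vphi}^-$ lies in the orbit closure of $\nu$-typical trees under $\sm$, one can argue that $\nu$ charges every open neighborhood of $T_{\vphi}^-$; then invoke semicontinuity to conclude that a positive-measure set of trees shares the trivalent and nongeometric properties with $T_{\vphi}^-$. \textbf{The main obstacle} I anticipate is precisely this transfer-of-properties step: valency is not continuous on $\overline{\cv}_r$ (branch points can appear or merge in a limit), so being trivalent is not obviously an open condition, and one cannot simply say ``close to a trivalent tree implies trivalent.'' I expect the resolution is to work not with the trees themselves but with the associated \emph{legal laminations} / dual algebraic laminations (à la Coulbois–Hilion–Lustig) and their index data, which \emph{do} behave well under the relevant limits and under the $\out$-action; one shows that the index-theoretic invariants that characterize trivalence and nongeometricity (the discrepancy between $\gind$ and the combinatorial index $2r-2$) are constant $\nu$-a.e.\ by an ergodicity argument applied to these invariants directly, and then reads off both conclusions from the value realized by the principal automorphism $\vphi$. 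Packaging the ``index is a.e.\ constant and equals the extremal value'' statement cleanly is where the real work lies.
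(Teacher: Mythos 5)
Your proposal correctly reduces to showing that the two relevant Borel sets have positive $\nu$-measure and correctly locates the obstruction --- ``close to a trivalent tree'' does not imply trivalent, since branch-points can merge or new directions can appear in a limit of very small trees --- but it does not actually overcome this obstruction, and the sketch you offer for doing so does not work. The geometric index $\ind_{\rm geom}$ is \emph{not} semicontinuous in a way that would let you conclude ``a positive-measure set of trees shares the trivalent and nongeometric properties with $T_{\varphi}^-$''; the whole difficulty is that a sequence of trees with $\ind_{\rm geom}=2r-3$ and all branch-points trivalent can converge to a tree with strictly larger index or with a higher-valence branch-point (this is exactly what happens when a Nielsen path is ``created in the limit'' along a folding path). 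Invoking Coulbois--Hilion--Lustig lamination theory by itself does not repair this: the dual lamination $L(T)$ determines the projective class of $T$ in the uniquely ergodic case, but it does not hand you the branch-point valencies of $T$ through a continuity argument. In other words, your ``main obstacle'' paragraph names the gap but then asserts it is filled by machinery that does not fill it.

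The paper's route is quite different and is designed precisely to sidestep the lack of semicontinuity. Rather than proving a zero-one law for an a priori invariant Borel set, the paper constructs for $\nu$-a.e.\ $[T]$ a greedy geodesic folding ray $(G_t)$ converging to $T$ (Proposition \ref{prop:folds_to}), shows via an ergodic-theoretic recurrence argument on the bi-infinite path space (Lemma \ref{lem:mach_hyp}, Proposition \ref{prop:random_recurrent}) that $(G_t)$ fellow-travels translates of the lone axis of a principal $\varphi$ on arbitrarily long subsegments, and then proves the deterministic implication that such $\varphi$-recurrent folding rays are ``eventually legalizing'' (Proposition \ref{recurrent_star}). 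The eventually legalizing condition is exactly the quantitative control that lets one read off the branch-point valencies of $T$ from the gate structures along the ray without losing directions (Proposition \ref{gates_star}, Theorem \ref{th:trivalent}); it replaces the semicontinuity you would need. Your proposal contains none of this machinery, so as written it has a genuine gap at the step you flag, and the ergodicity/zero-one packaging you propose does not supply a way around it. A smaller point: your gloss of ``principal'' (connected Whitehead graphs with no cut vertices) describes a weaker condition; in this paper principal means $\ind_{\rm geom}(T_+^\varphi)=2r-3$, every branch-point of $T_+^\varphi$ is trivalent, and every nondegenerate turn at a branch-point is taken by $\Lambda_\varphi$, which forces the lone-axis and single-illegal-turn structure used throughout.
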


This answers a question of Mladen Bestvina, who asked us whether almost every tree in $\partial \cv_r$ is trivalent.

\vspace{2mm}

An important component of our argument for Theorem \ref{th:main} is the existence of a \emph{principal} outer automorphism in the semigroup generated by the support of $\mu$.
Such outer automorphisms were originally introduced in \cite{stablestrata} and are discussed further in Section \ref{sec:fellow_folding}.
Let us remark here that principal outer automorphisms are analogous to pseudo-Anosov mapping classes whose Teichm\"uller axes live in the top dimensional stratum over Teichm\"uller space.

As a simple example, we note that the hypotheses of Theorem \ref{th:main} are satisfied when the support of $\mu$ is a finite symmetric generating set of $\out$ -- see Corollary~\ref{cor:main} below.

\subsection*{Connections to previous work}
In our previous work \cite{KMPT}, we proved that with probability approaching $1$ as $n \to \infty$, the random outer automorphism $w_n$ is fully irreducible and its attracting/repelling trees $T^{w_n}_{\pm}$ are trivalent and nongeometric.
However, since such trees form a countable, and hence $\nu$-measure zero, subset of $\partial \cv_r$, this provides no information about a $\nu$-typical tree in $\partial \cv_r$. Indeed, the machinery previously employed, that of ideal Whitehead graphs associated to fully irreducible outer automorphisms, is no longer available in the general setting studied in this paper. Instead, we rely on new results that connect the structure of folding paths to properties of their limiting trees in order to study branching and index properties of the latter.

Our main theorem (Theorem \ref{th:main}) in some sense parallels, and is inspired by, the main theorem of \cite{gm17} in the mapping class group setting. There, Gadre--Maher show that with respect to the hitting measure, a typical lamination in Thurston's boundary of Teichm\"uller space has complementary regions that are triangles and once-punctured disks.

However, our setting differs from theirs in a few key ways. First, their arguments ultimately rely on the openness of the top dimensional stratum in the unit cotangent bundle of Teichm\"uller space. Of course there is no similar structure for $\cv_r$ and
so entirely different techniques must be developed. For this, we introduce the concepts of \emph{eventually legalizing} folding rays (Section \ref{sec:condition*}) and \emph{principal recurrence} (Section \ref{sec:random_rays}) which we hope will additionally be useful in future work.
Second, as previously mentioned, in the mapping class group setting every limit point of the random walk is geometric (essentially by definition), and so the fact that a typical tree in $\partial \cv_r$ is nongeometric is a truly novel feature of the $\out$-setting. Our argument for this
uses the index theory of   Gaboriau and Levitt \cite{gl95}. Informally, this states that being nongeometric is equivalent to the failure of a `Poincar\'e--Hopf index formula' for branch-points of the tree. Using our specialized folding rays, we show that such a formula typically fails.

\subsection*{Outline of paper}
Section \ref{sec:background} provides background on some geometric tools used to study $\out$ and concludes by discussing a few properties of the hitting measure on the boundary of Outer space associated to a random walk on $\out$. In Section \ref{sec:fellow_folding}, we discuss the needed properties of principal outer automorphisms. These are fully irreducible outer automorphisms whose axes in Outer space have particularly rigid and saturated structure. The main result there (Proposition \ref{prop:same_fold}) says that an arbitrary folding path which closely fellow travels such an axis inherits much of the same structure.

Section \ref{sec:condition*} presents our main (nonrandom) criteria (Theorem \ref{th:trivalent}) ensuring that a folding ray determines a limiting tree that is trivalent and nongeometric. We call such folding paths \emph{eventually legalizing}. Informally, these are folding rays for which every path is, after flowing forward and pulling tight, eventually legal, i.e. no longer folded.  If the  `eventually legalizing' condition on the folding ray holds, it allows one to recover the precise structure of the branch-points of the limiting tree $T$ from the graphs along the ray, without losing any directions at the branch-points.
A similar issue arose in a recent paper ~\cite{BHW16}, where the authors introduced a ``carrying index"  of $T$ which sufficed for their purposes but might not detect some directions at branch-points of $T$.

To establish the eventually legalizing property for a \emph{random} folding ray, we introduce the notion of \emph{principal recurrence} in Section \ref{sec:random_rays}. A folding ray is principally recurrent if it fellow travels a translate of a principal axis on arbitrarily long subsegments. The main result (Proposition \ref{prop:random_recurrent}) of Section \ref{sec:random_rays} says that random folding rays are principally recurrent.

Finally, in Section \ref{sec:principal_recurrence} we show that a principally recurrent folding path is eventually legalizing (Proposition \ref{recurrent_star}). The proof of this fact uses results established in Section \ref{sec:fellow_folding} and is another instance of a folding path inheriting the structure of a principal axis that it fellow travels. In Section~\ref{sec:main} we combine the above results to complete the proof of Theorem \ref{th:main}.

\subsection*{Acknowledgments}
The first named author was partially supported by NSF grants DMS-1710868 and DMS-1905641. The second named author was
 supported by Simons Foundation and PSC-CUNY. The third named author acknowledges support from a Queen's University Research Initiation Grant.
The last named author thanks Spencer Dowdall for enlightening conversations and is partially supported by NSF grant DMS-1744551.
 All authors thank M. Bestvina for asking the central question addressed in this paper and acknowledge support from U.S. National
    Science Foundation grants DMS 1107452, 1107263, 1107367 ``RNMS:
    GEometric structures And Representation varieties'' (the GEAR
    Network).
    
Finally, we thank the referee for several helpful suggestions and corrections.

\section{Background}\label{sec:background}

We record here some preliminaries used throughout the paper. Most of this appears in the literature, with exceptions including Proposition \ref{prop:folds_to}, which builds folding paths to trees in $\partial \cv$, and Corollary \ref{cor:random_free}, which establishes that a random tree in $\partial \cv$ is free.

\subsection{Outer space}\label{ss:outerspace}

We denote by $\widehat \cv$ the unprojectivized Outer space for the free group $F_r$ (where $r\ge 2$), and we denote by $\cv = \cv_r$ the corresponding projectivized Outer space.
A point in $\widehat\cv$ is represented (up to some natural equivalence) by a marked metric graph structure on a finite connected graph $G$ where each vertex of $G$ has degree $\ge 3$, the \emph{metric} assigns each edge of $G$ a strictly positive length, and the \emph{marking} identifies $\pi_1(G)$ with $F_r$. We can also think of this point of $\widehat\cv$ as the minimal free discrete isometric action of $F_r$ on the $\R$-tree $T=\widetilde G$ with the lifted metric. We denote by $\vol(G)=\vol(T)$ the sum of the lengths of the edges of $G$.  The space $\cv\subseteq \widehat \cv$ consists of points $G\in \widehat \cv$ with $\vol(G)=1$.

There is a natural closure $\overline{\widehat\cv}$ of $\widehat \cv$ with respect to the length function topology, and $\overline{\widehat\cv}$ is known to consist of precisely the \emph{very small} nontrivial minimal isometric actions by $F_r$ on $\R$-trees. The projectivization of $\overline{\widehat\cv}$ with respect to the natural multiplication action of $\R_{>0}$ is denoted $\overline\cv$; it is known that $\overline\cv$ is compact. For every $T\in\widehat\cv$ the projective class $[T]$ is canonically identified with $T/\vol(T)\in \cv$, and thus we can think of $\cv$ as the projectivization of $\widehat\cv$, and so as a subset of $\overline\cv$.  We denote $\partial\cv=\overline\cv-\cv$.
For additional background on Outer space, its topology, and its boundary see \cite{cv86, cl95, bf94, Pa89}.

For $G_1,G_2\in\widehat\cv$, we denote by $\Lambda(G_1,G_2)$ the infimum of the Lipschitz constants of the continuous maps $f \colon G_1\to G_2$ preserving the marking, i.e. ``change of marking" maps. It is known that for $G_1,G_2\in\cv$ we have $\Lambda(G_1,G_2)\ge 1$, and that $G_1=G_2$ in $\cv$ if and only if $\Lambda(G_1,G_2)=1$.  For $G_1,G_2\in\cv$ we denote $\dL(G_1,G_2)=\log\Lambda(G_1,G_2)$ and refer to $\dL$ as the \emph{asymmetric Lipschitz metric} on $\cv$. For more on this metric, see \cite{FrancavigliaMartino,a08,bf11}. As is common, we let $\ds$ denote the \emph{symmetric Lipschitz metric}: $\ds(G_1,G_2) = \dL(G_1,G_2) + \dL(G_2,G_1)$.

For an interval $J\subseteq \R$, a map $\gamma \colon J\to \cv$ is called a \emph{geodesic} in $\cv$ if \linebreak $\dL(\gamma(t),\gamma(t')) =t'-t$ for all $t,t'\in J$ with $t\le t'$.
A \emph{geodesic ray} in $\cv$ is a geodesic $\gamma \colon  [0,\infty)\to \cv$. We emphasize that the term geodesic always refers to the asymmetric Lipschitz metric.

\subsection{Laminations and arational trees}\label{back:arat}

We refer the reader to~\cite{chl08I,chl08II,ReynoldsArational,br,bf11} for detailed background on algebraic laminations on $F_r$, arational trees, and the free factor complex. We only recall a few basic facts here. For a free group $F_r$ (with $r\ge 2$) let $\partial F_r$ be its Gromov boundary and let $\partial^2 F_r=\{(z_1,z_2)\in \partial F_r\times \partial F_r| z_1\ne z_2\}$. The set $\partial^2 F_r$ is equipped with the subspace topology from $ \partial F_r\times \partial F_r$ and with the diagonal translation action of $F_r$. An \emph{algebraic lamination} on $F_r$ is a subset $L\subseteq \partial^2 F_r$ which is closed, $F_r$-invariant, and flip-invariant (for the ``flip'' map $\partial^2 F_r\to \partial^2 F_r$ defined by $(z_1,z_2)\mapsto (z_2,z_1)$). For an algebraic lamination $L$ on $F_r$ a pair $(z_1,z_2)\in L$ is called a \emph{leaf} of $L$.  For a lamination $L$ on $F_r$, a leaf $(z_1,z_2)\in L$, and a nontrivial finitely generated subgroup $H\le F_r$ we say that $(z_1,z_2)$ is \emph{carried} by $H$ if both $z_1$ and $z_2$ are contained in $\partial H$. Here we have used the facts that $H$ is itself free and that the inclusion $H \to F_r$ induces an embedding $\partial H \to \partial F_r$.

For any tree $T\in \overline{\widehat\cv}$ there is an associated \emph{dual lamination} or \emph{zero lamination}  $L(T)\subseteq \partial^2 F_r$ on $F_r$ which depends only on the projective class $[T]\in \overline{\cv}$. The dual lamination encodes, in a systematic way, the information about sequences of elements of $F_r$ with arbitrarily small translation length in $T$. We refer the reader to \cite{chl08II} for the precise technical definition of $L(T)$. For our purposes the key relevant facts are that for $T\in \overline{\widehat\cv}$ we have
$L(T)=\varnothing$ if and only if $T\in \widehat\cv$, and that whenever $T,T'\in  \overline{\widehat\cv}$ are such that $||u||_T\le ||u||_{T'}$ for every $u\in F_r$ then $L(T')\subseteq L(T)$.
Here $||u||_T$ denotes the translation length of $u\in F_r$ with respect to the action $F_r \curvearrowright T$. 
 A tree $T\in  \overline{\widehat\cv}$ is called \emph{arational} if $T\not\in\widehat\cv$ and if no leaf of $L(T)$ is carried by a proper free factor of $F_r$ \cite{ReynoldsArational}.  In this case the projectivized tree $[T]\in\partial\cv$ is also called \emph{arational}. Note that the property of being arational depends only on the dual lamination of the tree.

For $r\ge 3$, the \emph{free factor graph} $\FF$ is a simple graph where the vertex set is the set of $F_r$-conjugacy classes of proper free factors of $F_r$. Two distinct vertices of $\FF$ are adjacent in $\FF$ if and only if they can be represented as conjugacy classes $[A],[B]$ of proper free factors $A,B$ of $F_r$ such that $A\le B$ or $B\le A$. The graph $\FF$ is endowed with the simplicial metric where every edge has length 1, and with the natural left action of $\out$ by simplicial automorphisms (and hence by isometries), where for a vertex $[A]$ of $\FF$ and an element $\phi\in\out$ we have $\phi\cdot [A]=[\phi(A)]$.

It is known, by a result of Bestvina and Feighn~\cite{bf11}, that for $r\ge 2$ the free factor graph $\FF$ is Gromov-hyperbolic, and that for $\phi\in\out$ the element $\phi$ acts as a loxodromic isometry if and only if $\phi$ is fully irreducible. 
(Recall that $\phi$ is fully irreducible if no positive power of $\phi$ fixes the conjugacy class of any proper free factor.)
There is a natural coarsely defined  and coarsely $\out$-equivariant ``projection'' $\pi \colon \cv\to \FF$ where $G_0\in \cv$ is mapped to the free factor $[A]$ represented by any proper connected non-contractible subgraph of $G_0$.  It is also known~\cite{br} (see also \cite{h12}) that the hyperbolic boundary $\partial \FF$ can be identified with the set of equivalence classes $[[T]]$ of arational trees $T\in \overline{\widehat\cv}$, where two such trees $T,T'$ are considered equivalent whenever $L(T)=L(T')$.

Finally, let $\mathcal{UE}$ be the subspace of $\partial \cv$  consisting of arational trees
having a unique length measure, up to scale. More precisely, $[T] \in \mathcal{UE}$ if and only if $T$ is arational and $[T] = [T']$ whenever $L(T) = L(T')$. Such trees are sometimes called \emph{uniquely ergodic}.

\subsection{Branch-points and the geometric index of a tree}
\label{sec:index}

For an $\R$-tree $T$ and a point $p\in T$, a \emph{direction} at $p$ in $T$ is a connected component of $T\setminus\{p\}$.
The number of directions at $p$ in $T$ is denoted $\val_T(p)$ and called the \emph{valency} (or \emph{degree}) of $p$ in $T$. We think of $\val_T(p)$ as an element of $\{\infty\}\cup \{n\in \Z|n\ge 0\}$. A point $p\in T$ is a \emph{branch-point} of $T$ if $\val_T(p)\ge 3$.

Let $T\in \overline{\widehat\cv}$. In \cite{gl95} Gaboriau and Levitt proved that $T$ has only finitely many $F_r$-orbits of branch-points and only finitely many $F_r$-orbits of directions at branch-points.
They also showed that if $T\in \overline{\widehat\cv}$ is a free $F_r$-tree then for every branch-point $p\in T$ one has $\val_T(p)<\infty$. For such a free $F_r$-tree $T$, if $p_1,\dots, p_m\in T$ are representatives of all the distinct $F_r$-orbits of branch-points, \cite{gl95} defined the \emph{geometric index}  $\ind_{geom}(T)$ as
\[
\ind_{geom}(T)=\sum_{i=1}^m [\val_T(p_i)-2].
\]
The unordered list $\val_T(p_1),\dots, \val_T(p_m)$ is the \emph{index list} for $T$.

Gaboriau and Levitt further defined $\ind_{geom}(T)$ for an arbitrary (not necessarily free) tree $T \in\widehat{\overline\cv}$ and proved that one always has $\ind_{geom}(T)\le 2r-2$.
The equality $\ind_{geom}(T)= 2r-2$ holds if and only if the tree $T$ is \emph{geometric}, i.e. arises as the dual tree of a measured foliation of some finite $2$-complex with fundamental group $F_r$.
We say that $T$ is \emph{nongeometric} if $\ind_{geom}(T) < 2r-2$.
We refer the reader to the paper \cite{ch12} for more detailed background on this topic.

\subsection{Folding lines and limiting trees} \label{back:folding}
\label{back:greedyfolding}
We next turn to folding paths in $\cv$ and in $\widehat \cv$. In the case of folding paths between simplicial trees, we closely follow \cite[Section 2]{bf11}, where we refer the reader for additional details. Since we will be particularly interested in folding rays to points in $\partial \cv$, we pay special attention to this case in Proposition \ref{prop:folds_to}.

Following \cite{hm11,loneaxes}, we define a \emph{folding path} in $\widehat\cv$ as a proper continuous injective map $\gamma\colon I\to\widehat\cv$ (where $I\subseteq \R$ is an interval), with $\gamma(t)=G_t\in\widehat\cv$ for all $t\in I$, together with a family of continuous \emph{folding maps} $g_{t,t'}\colon G_t\to G_{t'}$, where $t,t'\in I$ with  $t\le t'$, satisfying the following properties: Each map $g_{t,t'}\colon G_t\to G_{t'}$ is locally injective on edges of $G_t$, and we have $g_{t,t}=Id_{G_t}$ for each $t\in I$. In addition, whenever $t\le t'\le t''$ for $t,t',t''\in I$, we have $g_{t,t''}= g_{t',t''}\circ g_{t,t'}$. We will often denote such a folding path as just $(G_t)_{t\in I}$ and suppress explicit mention of the maps $g_{t,t'}$.
A folding path is a \emph{folding line} if $I=\R$ and a \emph{folding ray} if $I=[t_0,\infty)$ for some $t_0\in \R$.

For the most part, in this paper we will concentrate on special ``greedy" types of folding paths.
We next turn to their description and refer the reader to \cite{bf11,FrancavigliaMartino} for more details.

For a point $G\in \widehat\cv$, a \emph{gate structure} $\mathcal T$ on $G$ is a partition, for every vertex $v$ of $G$, of  the set of oriented edges originating at $v$ into nonempty subsets called \emph{gates}. A turn $\{e_1,e_2\}$ at $v$ (i.e. a pair of oriented edges originating at $v$) is called \emph{legal} with respect to $\mathcal T$ if $e_1,e_2$ belong to different gates, and is called \emph{illegal} otherwise. In this setting the gate structure and the notions of legal and illegal turns naturally extend, via lifting, to $T=\widetilde G$. An edge-path (or a circuit) in $G$ is called \emph{legal} with respect to $\mathcal T$ if for every $2$-edge subpath $ee'$ of this path, the turn $\{e^{-1},e'\}$ is legal.
A \emph{train track structure} on $G$ is a gate structure $\mathcal T$ on $G$ such that at each vertex of $G$ there are at least 2 gates.

For trees $T_0\in\widehat\cv$, $T\in \overline{\widehat\cv}$, an $F_r$-equivariant map $f \colon T_0\to T$ is called a \emph{morphism} if for each edge $e=[x,y]$ of $T_0$ the map $f$ sends $e$ isometrically to $[f(x),f(y)]_T$ (so that, in particular, $f(x)\ne f(y)$). Note that a morphism is, by definition, a 1-Lipschitz map. A morphism $f \colon T_0\to T$ defines a \emph{pullback} gate structure $\mathcal T_f$ on $T_0$ where a turn $\{e_1,e_2\}$ at a vertex $x$ of $T_0$ is legal if and only if the restriction of the map $f$ to the path $e_1^{-1}e_2$ is injective. 
 A morphism $f \colon T_0\to T$ is \emph{optimal} if the pullback gate structure $\mathcal T_f$ is a train track structure on $T_0$.

Suppose $T_0=\widetilde G_0\in\widehat\cv$, $T\in \overline{\widehat\cv}$, and $f \colon T_0\to T$ is an optimal morphism. Then $f$ canonically determines in $\widehat \cv$ a \emph{greedy isometric folding path defined by $f$}, denoted  $(\widehat G_s)_{s\in J}$, with $J\subseteq [0,\infty)$ an interval starting at $0$, with $G_0=\widehat G_0$, and with the following properties and additional structure. For every $s,s'\in J$ with $s\le s'$ we have a $1$-Lipschitz map $\wh g_{s,s'}\colon \widehat G_s\to \widehat G_{s'}$ that lifts to an optimal morphism $f_{s,s'} \colon T_s\to T_{s'}$, where $T_s=\widetilde {\widehat G_s}$ and $T_{s'}=\widetilde {\widehat G_{s'}}$. For each $s\in J$ we also have an optimal morphism $f_s \colon  T_s\to T$, where $f_0=f$. These morphisms are compatible, in the sense that for every $s,s'\in J$ with $s\le s'$ we have $f_{s'}\circ f_{s,s'}=f_s$. For each $s\in J$ we equip $T_s$ with the pullback gate structure $\mathcal T_s$ induced by $f_s \colon T_s\to T$.
(In what follows, we will refer to both sets of maps $\wh g_{s,s'}$ and $f_{s,s'}$ as \emph{folding maps}.)
The ``greedy'' property of this folding line means that for each $s\in J$, which is not the right-end point of $J$, there exists an $\epsilon>0$ such that $[s,s+\epsilon)\subseteq J$ and such that for each $s'\in (s,s+\epsilon)$ the map $f_{s,s'} \colon T_s\to T_s'$ is obtained by equivariantly, at each vertex $x$ of $T_s$ and for each gate (with respect to $\mathcal T_s$) at $x$,  folding together into a single segment the initial segments of length $s'-s$ of all the edges  in that gate. The interval $J$ starting at $0$ is chosen to be maximal possible subject to $(\widehat G_s)_{s\in J}$ satisfying all these properties.

For several constructions of greedy folding lines and additional properties, see \cite[Section 2]{bf11}. We remark on a few relevant properties here. The function $\vol(T_s)$ is strictly monotone decreasing on $J$.
Moreover, the fact that $f \colon T_0\to T$ is an optimal morphism implies that for each $s\in J$ the pullback gate structure $\mathcal T_s$ on $\widehat G_s$ is a train track structure. The path $(\widehat G_s)_{s\in J}$, with the maps $\widehat g_{s,s'}$, is a folding path in $\cv$ in the more general sense described in Subsection \ref{back:folding}.
Also, in this setting, for any $s_1\le s_2$ in $J$ the path $(\widehat G_s)_{s\in [s_1,s_2]}$ is (up to shifting the parameter by $s_1$) exactly the greedy isometric folding path defined by $f_{s_1,s_2}\colon T_{s_1}\to T_{s_2}$.

It is known that if $f \colon T_0\to T$ is an optimal morphism, then the path $(\widehat G_s)_{s\in J}$ projects to a reparameterized geodesic in $\cv$ \cite{FrancavigliaMartino,a08}. In this case for $s,s'\in J$ with $s\le s'$ we have $\widehat G_s/\vol(\widehat G_s), \widehat G_{s'}/\vol(\widehat G_{s'})\in \cv$ and
\[
\dL\left(\frac{\widehat G_s}{\vol(\widehat G_s)}, \frac{\widehat G_{s'}}{\vol(\widehat G_{s'})} \right)=\log \frac{\vol(\widehat G_s)}{\vol(\widehat G_{s'})}.
\]

In particular, if $G_0\in\cv$ has volume 1, then in this setting\[
\dL \left(G_0, \frac{\widehat G_{s}}{\vol(\widehat G_{s})} \right)=\log \frac{1}{\vol(\widehat G_{s})}=-\log \vol(\widehat G_{s}).
\]
Since $\vol(\widehat G_s)$ is a strictly decreasing function on $J$, there exists a unique monotone increasing reparameterization $\alpha(t)$ of $J$ with $\alpha(0)=0$, $\alpha \colon J'\to J$, such that $\vol(\widehat G_{\alpha(t)})=e^{-t}$ for all $t\in J'$.  We denote $G_t=\widehat G_{\alpha(t)}/\vol(\widehat G_{\alpha(t)})$ for all $t\in J'$. Note that as topological spaces we have $G_t=\widehat G_{\alpha(t)}$, and the only difference between $G_t$ and $\widehat G_{\alpha(t)}$ is in their metric graph structures. For all $t\le t'$ in $J'$ we also set $g_{t,t'}=\wh g_{\alpha(t),\alpha(t')}$. Then $(G_t)_{t\in J'}$, with the maps $g_{t,t'}$, is a folding path in $\cv$ in the general sense described above.

This reparameterization gives us a path $(G_t)_{t\in J'}$ in $\cv$ starting at $G_0$ which is a geodesic in $\cv$. If $G_0\in \cv$, $T_0=\widetilde G_0$, $T\in \overline{\widehat\cv}$, and $f \colon T_0\to T$ is an optimal morphism, we refer to $(G_t)_{t\in J'}$ as the  \emph{greedy geodesic folding path defined by $f$}.

If $T\in \cv$, then in the above setting a greedy  geodesic folding path defined by $f$ always reaches $T$ in some finite time, and $J'=[0,\dL(T_0,T)]$.  If $[T]\in \partial \cv$, then it is possible that $J'$ is a finite interval (this can happen if the geodesic folding path exits $\cv$ after a finite distance), and even in the case where $J'=[0,\infty)$ we are not necessarily guaranteed that $\lim_{t\to\infty}G_t=[T]$ in $\overline\cv$. Nevertheless, for reasonably nice $T\in \partial\cv$ one can rule out such unexpected behavior.

\begin{proposition} \label{prop:folds_to}
Let $[T]\in\partial \cv$ be such that $T$ is a free $F_r$-tree. Then:
\begin{enumerate}
\item For each $r$-rose in $\cv$ there exists a metric structure $G_0\in\cv$ on this rose and an optimal morphism $f \colon \widetilde G_0=T_0\to cT$ for some $c>0$.
\item Let $T_0=\widetilde G_0\in\cv$, let $f \colon T_0\to T$ be an optimal morphism, and  let $(\widehat G_s)_{s\in J}$ and $(G_t)_{t\in J'}$ be the greedy isometric folding path and the greedy geodesic folding path determined by $f$. Denote $M=\sup \{s\mid s\in J\}$. Then:
\begin{itemize}
\item[(a)] There exists a limit $\lim_{s\to M^-} \widehat G_s=T'$ in $\overline{\widehat\cv}$, and, moreover, $T'$ is again a free $F_r$-tree and $[T']\in \partial \cv$. Moreover, in this case $L(T')\subseteq L(T)$.
\item[(b)] If, in addition, $T$ is arational, then $L(T)=L(T')$
and $J'=[0,\infty)$, so that
\[
\lim_{s\to M^-} \dL\left(G_0, \frac{\widehat G_s}{\vol(\widehat G_s)}\right)=\infty
\]
\item[(c)] If $T$ is arational and uniquely ergodic, then $T'=T$ in $\overline{\widehat\cv}$, and  hence 
\[\lim_{t\to \infty} G_t= T \] 
in $\overline\cv$.
\end{itemize}

\end{enumerate}

\end{proposition}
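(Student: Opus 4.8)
The plan is to establish the four assertions in order, with (2)(a) carrying the main structural content. For (1) I would use the standard construction of optimal morphisms (cf.\ \cite{FrancavigliaMartino}), arranged to have a rose as domain. Fix a free basis $x_1,\dots,x_r$ of $F_r$ realized by the petals of the given rose, choose $p\in T$, and give the $i$-th petal length $d_T(p,x_ip)$; rescaling $T$ by the reciprocal $c$ of the total makes this a volume-one marked rose, and one maps the lift of the $i$-th edge $F_r$-equivariantly and isometrically onto $[p,x_ip]\subseteq cT$. These segments are nondegenerate because $T$ is free, so this is a morphism; the only point to check is that the pullback gate structure is a train-track structure, i.e.\ the $2r$ directions $[p,x_i^{\pm1}p]$ at $p$ do not all lie in one gate. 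If they do they share a nondegenerate common initial segment, and moving $p$ forward along it strictly shortens that segment — using that $T$, being a free $F_r$-tree with $r\ge 2$, is not a line and has finitely many orbits of branch points by \cite{gl95} — so after finitely many steps we reach an admissible base point.

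For (2)(a): since each folding map $f_{s,s'}\colon T_s\to T_{s'}$ is $1$-Lipschitz, for every $u\in F_r$ the function $s\mapsto\|u\|_{\widehat G_s}$ is non-increasing; moreover the compatible optimal morphisms $f_s\colon T_s\to T$ give $\|u\|_T\le\|u\|_{\widehat G_s}$. Hence $\ell_\infty(u):=\lim_{s\to M^-}\|u\|_{\widehat G_s}$ exists and satisfies $\ell_\infty(u)\ge\|u\|_T>0$ for all $u\ne1$; so $\ell_\infty$ is a nonzero length function, realized (since $\overline{\widehat\cv}$ is closed in the length-function topology) by a unique $T'\in\overline{\widehat\cv}$ with $\lim_{s\to M^-}\widehat G_s=T'$, on which $F_r$ acts freely. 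The inequality $\|u\|_T\le\|u\|_{T'}$ for all $u$ together with the stated monotonicity of dual laminations gives $L(T')\subseteq L(T)$. For $[T']\in\partial\cv$, i.e.\ $T'\notin\widehat\cv$, I would use maximality of $J$: first, $M<\infty$ and $\vol(\widehat G_s)$ strictly decreases at rate at least $1$ along $[0,M)$, because the greedy folding always has an illegal turn — otherwise $f_s$ is a surjective immersion of $\R$-trees, hence an isometry, forcing $T$ simplicial. Second, the $1$-Lipschitz maps $f_s$ and $\widehat g_{s,s'}$ converge (by an Arzel\`a--Ascoli argument with compatible base points) to $f_\infty\colon T'\to T$ and $\widehat g_{s,\infty}\colon T_s\to T'$ with $f_s=f_\infty\circ\widehat g_{s,\infty}$, and $f_\infty$ is isometric on edges of $T'$ (an interior fold would already be visible in some $f_s$). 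So if $T'\in\widehat\cv$ then $f_\infty$ is an optimal morphism from a simplicial tree, and the greedy folding path extends past $M$ unless $f_\infty$ is an isometry — in which case $T\cong T'\in\widehat\cv$, contradicting $[T]\in\partial\cv$.

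For (2)(b): $T'$ is arational, being a non-simplicial tree (since $[T']\in\partial\cv$) no leaf of whose lamination $L(T')\subseteq L(T)$ is carried by a proper free factor; and since the dual lamination of an arational tree is minimal (see \cite{ReynoldsArational}), the nonempty sublamination $L(T')$ of $L(T)$ equals $L(T)$. For $J'=[0,\infty)$, equivalently $\vol(\widehat G_s)\to0$ as $s\to M^-$ (equivalently the displayed divergence): suppose instead $\vol(\widehat G_s)\to V_\infty>0$. Using uniform control on the number of edges of $\widehat G_s$ along the ray, the $\delta$-thin subgraph of $\widehat G_s$ is a proper subgraph for small $\delta$, hence has proper free factor fundamental group; but since $T'\notin\widehat\cv$ there are arbitrarily short conjugacy classes in $\widehat G_s$ (for $s$ near $M$) limiting onto a leaf of $L(T')=L(T)$, and each such class is carried by the thin part — a proper free factor — contradicting arationality of $T$. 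I expect this step, controlling the thin part of the $\widehat G_s$ uniformly along the whole ray, to be the main obstacle.

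For (2)(c): unique ergodicity of $T$ together with $L(T)=L(T')$ from (2)(b) forces $[T']=[T]$ in $\overline\cv$, and by the normalization $\|u\|_{T'}\ge\|u\|_T$ from (2)(a) we have $T'=cT$ with $c\ge1$. That $c=1$, hence $T'=T$ in $\overline{\widehat\cv}$, follows from the limiting morphism $f_\infty\colon T'\to T$ mapping axes isometrically — the folding has no residual cancellation in the limit when the ray converges to its arational target — so $\|u\|_{T'}=\|u\|_T$ for all $u$. Finally, $\lim_{s\to M^-}\widehat G_s=T'=T$ in $\overline{\widehat\cv}$ by (2)(a), so projectivizing and reparametrizing by $\alpha$ gives $\lim_{t\to\infty}G_t=[T]$ in $\overline\cv$.
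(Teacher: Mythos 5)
Your treatment of (2)(a) is sound and essentially the paper's: the non-increasing length functions bounded below by $\|u\|_T>0$ give the limit $T'$ free and nontrivial, and the maximality-of-$J$ argument for $[T']\in\partial\cv$ matches the paper's (which simply observes that $T'\in\widehat\cv$ would force $M\in J$, $T'=T_M$, and $f_M$ still not locally injective, so the greedy path extends past $M$ — your Arzel\`a--Ascoli version is a more elaborate route to the same contradiction). Your construction for (1) also differs: the paper simply chooses $p$ on the axis of $a_1$, which immediately yields two gates at the base vertex, whereas your basepoint-moving argument is substantially more work and the ``after finitely many steps'' claim is not justified by the facts you cite.

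There are two genuine gaps. First, in (2)(b) you assert that ``the dual lamination of an arational tree is minimal'' and conclude $L(T')=L(T)$ directly. This is not true in general: the paper is careful to work with the \emph{derived} lamination $L'(T)$, which (by \cite[Prop.~4.2(i)]{br}) is the unique minimal sublamination of $L(T)$ but need not equal $L(T)$ (diagonal leaves). The paper deduces $L'(T)=L'''(T)\subseteq L(T')$ and then invokes \cite[Cor.~4.3]{br} to upgrade this to $L(T')=L(T)$; your shortcut skips exactly the step that reference is needed for. Second, your proof that $J'=[0,\infty)$ via a thin-part argument is a different route from the paper's (which projects the folding ray to an unbounded quasigeodesic in $\FF$ converging to $\partial\FF$ and uses that $\pi$ is coarsely Lipschitz), and as you yourself flag, controlling the thin subgraphs uniformly and pinning a \emph{single} leaf of $L(T')$ to a \emph{fixed} proper free factor as $s\to M^-$ is nontrivial: the carrying subgraphs can change with $s$, so some extraction/pigeonhole step is missing. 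Finally, in (2)(c) the paper proves $b=1$ by exhibiting a legal circuit $\gamma$ (representing $w$) in $G_0$, whose length is preserved by every folding map and by $f_0$, giving $\|w\|_{T'}=\|w\|_{T_0}=\|w\|_T$; your assertion that ``$f_\infty$ maps axes isometrically --- the folding has no residual cancellation in the limit'' is precisely the content to be proved, and as written is a restatement rather than an argument.
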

\begin{proof}
(1) Let $\Gamma_0\in\cv$ be an $r$-rose corresponding to a free basis $a_1,\dots, a_r$ of $F_r$. By assumption $F_r$ acts freely on $T$, so that $a_1$ is a loxodromic isometry of $T$ with translation length $||a_1||_T>0$.

Let $x_0\in\widetilde \Gamma_0$ be a lift of the vertex $v_0$ of $\Gamma_0$.  Let $L_{a_1}\subseteq T$ be the axis of $a_1$ in $T$, and pick a point $p\in L_{a_1}$.
Thus $a_1p \in L_{a_1}$ and $d_T(p,a_1p)=||a_1||_T>0$.  By replacing $T$ by $cT$ for an appropriate $c>0$ we can assume that $\sum_{i=1}^r d_T(p,a_ip)=1$.

Note that since $T$ is a free $F_r$-tree, we have $a_ip\ne p$ for $i=1,\dots, r$. We give each edge $a_i$ of $\Gamma_0$ the length $d_T(p,a_ip)>0$, which defines a new volume-1 metric structure $G_0$ on $\Gamma_0$, and a point $T_0=\widetilde G_0\in \cv$.  For $i=1,\dots, r$ denote by $x_i$ the vertex of $T_0$ which is the terminal endpoint of the lift $e_i$ of the petal $a_i$ of $\Gamma_0$ starting at $x_0$. We construct an $F_r$-equivariant morphism $f \colon T_0\to T$ by setting $f(x_0)=p$, setting $f(x_i)=a_ip$ for $i=1,\dots, r$, mapping each $e_i$ isometrically to the segment $[p,a_ip]_T$, and then extending $f$ by equivariance. By construction $f \colon T_0\to T$ is a morphism. Moreover, the fact that $p\in L_{a_1}$ implies that $x_0$ (and hence every other vertex of $T_0$) has at least 2 gates for the pullback gate structure $\mathcal T_f$. Thus $f$ is an optimal morphism, as required.

(2)

(a) Since $f \colon T_0\to T$ is an optimal morphism, hence each vertex for the pullback legal structure $\mathcal T_0$ on $T_0$ has at least 2 gates at each vertex, there exists a nontrivial $\mathcal T_0$-legal circuit $\gamma$ in $G_0$ representing the conjugacy class of some $1\ne w\in F_r$. The fact that $(\widehat G_s)_{s\in J}$ is the greedy isometric folding path determined by $f$ and starting at $G_0=\widehat G_0$ implies that for each $s\in J$ the circuit $f_{0,s}(\gamma)$ is legal in $\widehat G_s$ for the train track structure $\mathcal T_s$ induced by $f_s \colon T_s\to T$.
Recall that $M=\sup \{s\mid s\in J\}$. Thus $0<M \le \vol(G_0) < \infty$.

The fact that for any $s\le s'$ in $J$ the folding map $f_{s,s'} \colon T_s\to T_{s'}$ is 1-Lipschitz implies that for each $u\in F_r$
we have $||u||_{T_s}\ge ||u||_{T_{s'}}$.  Thus for each $u\in F_r$ the function $||u||_{T_s}$ is monotone non-increasing on $J$ and there is a finite limit $\lim_{s\to M^-} ||u||_{T_s}$. Moreover, for our legal loop $\gamma$ representing $1\ne w\in F_r$ we have $||w||_{T_s}=||w||_{T_0}>0$, and so the limit $\lim_{s\to M^-} ||w||_{T_s}=||w||_{T_0}>0$. Therefore there exists a nontrivial tree $\lim_{s\to M^-} T_s=T'$ in $\overline{\widehat\cv}$. Since there are 1-Lipschitz maps $f_s \colon T_s\to T$, we have $||u||_{T_s}\ge ||u||_T$ for every $u\in F_r$ and every $s\in J$. Therefore, for the limiting length function $||.||_{T'}$, we also have $||u||_{T'}\ge ||u||_T$ for all $u\in F_r$. Recall that $T$ is a free $F_r$-tree. Therefore for every $1\ne u\in F_r$ we have $||u||_{T'}\ge ||u||_T>0$, so that $T'$ is also a free $F_r$-tree.

We claim that $[T']\in \partial \cv$. Suppose not. Then $T'\in\widehat\cv$ and $\sup_J s=M\in J$ and $T'=T_M$.  The assumption that $T\in \partial\cv$ then implies that the map $f_M \colon  T_M\to T$ is not locally injective, and therefore for the gate structure $\mathcal T_M$ on $T_M$ there exists a gate at some vertex with at least two distinct edges in that gate. This means that the isometric folding path $(\widehat G_s)_{s\in J}$ can be continued past $s=M$ for some positive time $[M,M+\epsilon)$, contradicting the fact that $M=\sup_J s$. The condition $||u||_{T'}\ge ||u||_T$ for all $u\in F_r$ also implies that $L(T')\subseteq L(T)$.
This completes the proof of (2)(a).

(b) Suppose now that, in addition, $T$ is both free and arational.  By part (a) above we know that  $[T']\in \partial \cv$ and therefore $L(T')\ne\varnothing$. Now \cite[Proposition~4.2(i)]{br} implies that the ``derived lamination'' $L'(T)\subseteq L(T)$ is the unique minimal sublamination in $L(T)$.  Since $L'(T)$ is minimal, we have $L'(T)=L''(T)=L'''(T)$. Since $L(T')\subseteq L(T)$, and since $L(T')$ is a nonempty lamination, it follows that $L'(T)\subseteq L(T')$.  Thus $L'''(T)\subseteq L(T')$. Since $T$ is arational,  \cite[Corollary~4.3]{br} implies that $L(T')=L(T)$, and that $T'$ is also arational.

Then the greedy geodesic folding path $(G_t)_{t\in J'}$ projects to a reparameterized quasi-geodesic in the free factor complex $\mathcal{FF}$ \cite[Corollary 6.5]{bf11} which converges to a point of the hyperbolic boundary $\partial  \mathcal{FF}$ represented by $T$ \cite[Proposition 8.3]{br}. Since the projection map $\pi \colon \cv\to\mathcal {FF}$ is coarsely Lipschitz, it follows that $J'=[0,\infty)$. Indeed, otherwise $J'$ is a finite interval and $\pi$ would map the folding  line $(G_t)_{t\in J'}$ to a set of bounded diameter in $\mathcal {FF}$, which cannot limit to a point of $\partial \mathcal{FF}$. Thus indeed $J'=[0,\infty)$ and $\lim_{s\to M^-} \dL(G_0, \widehat G_s/\vol(\widehat G_s))=\infty$. Part (2)(b) is verified.

(c) Suppose now that $T$ is free arational and uniquely ergodic. By part (b) we know that $L(T)=L(T')$ and $T'$ is arational. Then, by definition of unique ergodicity, we have $[T]=[T']$ in $\partial\cv$. Thus $T'=bT$ for some $b>0$. Note that for our legal circuit $\gamma$ representing $w$ in $G_0$ we have $||w||_T=||w||_{T_0}=||w||_{T'}>0$ and therefore $b=1$. Thus $T=T'$ in $\overline{\widehat\cv}$, as required.
\end{proof}

We conclude this subsection by setting a few conventions to simplify terminology.

\begin{conv}
From now on, by a \emph{geodesic folding ray} in $\cv$ we mean a folding ray $(G_t)_{t\in [t_0,\infty)}$ in $\cv$ which, up to a shift of the parameter by $t_0$, is  a greedy geodesic folding path in $\cv$ with $J'=[0,\infty)$. Also, by a \emph{geodesic folding line} in $\cv$ we mean a folding line $(G_t)_{t\in \R}$ in $\cv$ such that for every $t_0\in R$ the path $(G_t)_{t\in [t_0,\infty)}$ is a geodesic folding ray in $\cv$.

We will often abbreviate the notation for geodesic folding rays and geodesic folding lines in $\cv$ to just $(G_t)$. Moreover, if a geodesic folding line in $\cv$ is $\varphi$-periodic for some fully irreducible $\varphi\in\out$, we usually denote such a line by $A(t)$.
\end{conv}

\subsection{Random walks and Outer space}

The general notion of a nonelementary probability measure on a group acting isometrically on a Gromov-hyperbolic metric space is discussed in more detail in Section~\ref{sec:random_rays} below. Considering the case of the action of $G=\out$ on the free factor graph $\mathcal{FF}$, a probability measure $\mu$ on $\out$ is \emph{nonelementary} if the subsemigroup $\sm$ of $\out$ generated by the support of $\mu$ contains some two independent fully irreducible elements $\psi_1,\psi_2$. Here \emph{independent} means that the attracting and repelling fixed points of $\psi_1,\psi_2$ in $\partial \mathcal{FF}$ are four distinct points.  By \cite[Proposition~2.16, Theorem~4.1]{bfh97}, fully irreducibles $\psi_1,\psi_2\in\out$ are independent if and only if $\langle \psi_1,\psi_2\rangle\le \out$ is not virtually cyclic, and also if and only if $\langle\psi_1\rangle\cap \langle\psi_2\rangle=\{1\}$.

Recall that $\mathcal{UE} \subset \partial \cv$ is the subspace of uniquely ergodic trees.

The following is Theorem 7.21 of Namazi--Pettet--Reynolds \cite{namazi2014ergodic}; see also Dahmani--Horbez \cite[Theorem 5.10]{DahmaniHorbez} and Horbez \cite[Proposition 4.4]{horbez2017central}.

\begin{theorem}[Hitting measure on $\partial \cv$]\label{th:npr}
 Let $\mu$ be a nonelementary probability measure on $\Out(F_r)$ with finite first moment with respect to $d_{\cv}$. Then for almost every sample path $\omega = (\omega_n)_{n\ge0}$ of the random walk on $(\Out(F_r), \mu)$ and any $y_0 \in \cv$, the sequence $(\omega_n  y_0)_{n\ge0}$ converges to a point $\mathrm{bnd}(\omega) \in \mathcal{UE}$. The hitting measure $\nu$ defined by setting
\[
 \nu(S) = \mathbb{P}(\mathrm{bnd}(\omega) \in S),
 \]
for all measurable subsets $S \subset \partial \cv$ is nonatomic, and it is the unique $\mu$-stationary measure on $\partial \cv$.
\end{theorem}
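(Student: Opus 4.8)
This result is a synthesis of known theorems, and the plan is to reassemble them; I would carry it out in four steps. The strategy is to run the machinery of random walks on groups with a nonelementary action on a Gromov-hyperbolic space for the action $\out\curvearrowright\mathcal{FF}$, transport the boundary convergence from $\partial\mathcal{FF}$ to $\overline\cv$ via the dictionary between $\partial\mathcal{FF}$ and arational trees recalled in Subsection~\ref{back:arat}, and finally promote this to honest convergence in $\overline\cv$ to a uniquely ergodic tree using Horbez's identification of the Poisson boundary of $(\out,\mu)$. \emph{Step 1 (random walk on $\mathcal{FF}$).} Since $\mu$ is nonelementary, $\sm$ contains two independent fully irreducibles, which by \cite{bf11} act as independent loxodromic isometries of the separable, Gromov-hyperbolic graph $\mathcal{FF}$, so $\out\curvearrowright\mathcal{FF}$ is a nonelementary action. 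By the Maher--Tiozzo theory of random walks on weakly hyperbolic groups, for almost every sample path $\omega$ and any $x_0\in\mathcal{FF}$ the orbit $(\omega_n x_0)$ converges to a point $\xi(\omega)\in\partial\mathcal{FF}$, and the hitting measure $\nu_{\mathcal{FF}}=\xi_*\PP$ is nonatomic and is the unique $\mu$-stationary probability measure on $\partial\mathcal{FF}$; this requires no moment assumption.

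\emph{Step 2 (lift to $\overline\cv$).} Fix $y_0\in\cv$ and set $x_0=\pi(y_0)$, where $\pi\colon\cv\to\mathcal{FF}$ is the coarsely Lipschitz, coarsely $\out$-equivariant projection of Subsection~\ref{back:arat} \cite{bf11}. From $d_{\mathcal{FF}}(\omega_n x_0,x_0)\to\infty$ it follows that $\omega_n\to\infty$ in $\out$, so by proper discontinuity of $\out\curvearrowright\cv$ \cite{cv86} the sequence $(\omega_n y_0)$ accumulates only on $\partial\cv$, and it does accumulate there since $\overline\cv$ is compact. Coarse equivariance of $\pi$ gives that $\pi(\omega_n y_0)$ stays within bounded distance of $\omega_n x_0$, hence also converges to $\xi(\omega)$ in $\partial\mathcal{FF}$. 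By the dictionary between $\partial\mathcal{FF}$ and arational trees (Bestvina--Reynolds \cite{br}, cf.\ \cite{h12}; see also \cite{namazi2014ergodic}), convergence of $\pi(\omega_n y_0)$ to $\xi(\omega)$ forces every accumulation point of $(\omega_n y_0)$ in $\overline\cv$ to be an arational tree $[T]$ with $[[T]]=\xi(\omega)$. Thus the accumulation set lies in the single ``fiber'' of arational trees over $\xi(\omega)$, and it remains only to show this fiber is almost surely a single point, i.e.\ that $\xi(\omega)$ is represented by a uniquely ergodic tree.

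\emph{Step 3 (unique ergodicity).} This is the one substantial step, and the place where the finite first moment enters. I would argue it through the Poisson boundary. By Horbez's theorem \cite{horbez2016poisson}, $(\partial\mathcal{FF},\nu_{\mathcal{FF}})$ is the Poisson boundary of $(\out,\mu)$ --- the step using the moment hypothesis --- hence a \emph{maximal} $\mu$-boundary. One then shows (this is the heart of \cite{namazi2014ergodic}) that every $\mu$-stationary probability measure $\bar\nu$ on $\partial\cv$ --- which exists by compactness of $\overline\cv$ and a fixed-point argument, and which, after a preliminary reduction handling the non-arational locus by an inductive descent on complexity, is concentrated on the arational locus --- pushes forward, under the Borel $\out$-equivariant map $[T]\mapsto[[T]]$, to $\nu_{\mathcal{FF}}$. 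Then $(\partial\cv,\bar\nu)$ equipped with this map is a $\mu$-boundary, so maximality of the Poisson boundary yields a measurable $\out$-equivariant section $\partial\mathcal{FF}\to\partial\cv$; hence $[T]\mapsto[[T]]$ is $\bar\nu$-a.e.\ injective, i.e.\ $\bar\nu$-almost every arational tree is determined up to homothety by its lamination, so it is uniquely ergodic. (Alternatively, unique ergodicity of $\xi(\omega)$ can be obtained directly by a Masur-criterion-type recurrence argument for the walk in $\overline\cv$, exploiting North--South dynamics of the fully irreducibles in $\sm$.) Applying this to the distribution of accumulation points of $(\omega_n y_0)$, the fiber over $\xi(\omega)$ is almost surely a single point $\mathrm{bnd}(\omega)\in\mathcal{UE}$, so $\omega_n y_0\to\mathrm{bnd}(\omega)$ in $\overline\cv$.

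\emph{Step 4 (properties of $\nu$).} The measure $\nu=\mathrm{bnd}_*\PP$ is $\mu$-stationary, since exit measures of random walks always are; it is nonatomic because the map $\mathcal{UE}\to\partial\mathcal{FF}$, $[T]\mapsto[[T]]$, is injective and pushes $\nu$ to the nonatomic $\nu_{\mathcal{FF}}$; and it is the unique $\mu$-stationary measure on $\partial\cv$ because, by Step 3, any such measure is concentrated on $\mathcal{UE}$, pushes to $\nu_{\mathcal{FF}}$, and is therefore the pushforward of $\nu_{\mathcal{FF}}$ under the section of the (there injective) map $[T]\mapsto[[T]]$, namely $\nu$. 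The main obstacle throughout is Step 3 --- the unique ergodicity of the limiting tree --- which is precisely the contribution of Namazi--Pettet--Reynolds together with Horbez and depends on the Poisson-boundary identification (hence on the moment hypothesis); Steps 1, 2, and 4 are the standard boundary-convergence package for a nonelementary hyperbolic action combined with the $\partial\mathcal{FF}$--arational-tree dictionary.
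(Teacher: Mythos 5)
The paper does not prove Theorem~\ref{th:npr}; it is imported wholesale from the literature, with the explicit attribution ``This is Theorem 7.21 of Namazi--Pettet--Reynolds \cite{namazi2014ergodic}; see also Dahmani--Horbez \cite[Theorem 5.10]{DahmaniHorbez} and Horbez \cite[Proposition 4.4]{horbez2017central}.'' So there is no internal proof in this paper to compare your proposal against, and the honest answer is that your task here is to reconstruct the argument in those references rather than to match something the authors wrote.

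Taken on its own terms, your four-step outline is a sound high-level description of how the cited sources assemble the result: Maher--Tiozzo convergence for the nonelementary action on $\mathcal{FF}$, the Bestvina--Reynolds/Hamenst\"adt dictionary identifying $\partial\mathcal{FF}$ with arational trees up to lamination equivalence, a lifting argument to locate accumulation points of $(\omega_n y_0)$ in the arational locus of $\partial\cv$, and then the genuinely hard step --- showing the random limiting arational tree is uniquely ergodic --- which is where the finite first moment and the Poisson-boundary identification (Horbez) or the measure-theoretic machinery of Namazi--Pettet--Reynolds are actually used. Two cautions. First, the step where you pass from ``$\pi(\omega_n y_0)\to\xi(\omega)$ in $\partial\mathcal{FF}$'' to ``every accumulation point of $(\omega_n y_0)$ in $\overline{\cv}$ is arational and lies in the fiber over $\xi(\omega)$'' is not automatic continuity: $\pi$ is a coarse map defined only on $\cv$, and the compatibility of the Lipschitz-metric boundary with $\partial\mathcal{FF}$ is itself a theorem of \cite{br,h12}; you cite the right sources, but a full write-up would need to invoke the precise statement rather than treat it as formal. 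Second, your ``preliminary reduction handling the non-arational locus by an inductive descent on complexity'' is doing real work --- this is where one rules out stationary mass on the simplicial and non-arational strata of $\partial\cv$ --- and in \cite{namazi2014ergodic} this occupies a substantial portion of the argument; as written it reads as a parenthetical, but it is not a minor reduction. With those caveats, the proposal is a faithful sketch of the external proof this paper relies on.
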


In fact, it is not hard to see that $\nu$--almost every $T \in \partial \cv$ is also free. Since we will need this fact, we record it here. For the statement, we recall that a fully irreducible $\varphi \in \Out(F_r)$ is \emph{geometric} if there is a once punctured surface $S$ with $\pi_1(S) = F_r$ and a pseudo-Anosov homeomorphism $f \colon S \to S$ such that $f_* = \varphi$, as outer automorphisms. If $\phi$ is not geometric, then it is \emph{nongeometric}.

\begin{corollary} \label{cor:random_free}
Suppose in addition to the hypotheses of Theorem \ref{th:npr} that the semigroup generated by the support of $\mu$ contains a nongeometric fully irreducible outer automorphism. Let $\nu$ be the associated hitting measure on $\partial \cv$ as obtained in Theorem \ref{th:npr}. Then a $\nu$-typical tree $T$ in $\partial \cv$ is free.
\end{corollary}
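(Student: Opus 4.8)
The plan is to use the fact that $\nu$ is $\mu$-stationary together with the presence of a nongeometric fully irreducible in the semigroup $\sm$ to rule out, with full measure, any tree whose action is not free. The starting observation is that, by Theorem~\ref{th:npr}, $\nu$ is supported on $\ue$, so $\nu$-almost every $[T]$ is arational. Thus it suffices to show that the set $Z \subseteq \ue$ of arational trees on which $F_r$ does \emph{not} act freely has $\nu(Z) = 0$. Arational trees with non-free action are exactly the trees dual to (arational, filling) measured laminations on a once-punctured surface $S$ with $\pi_1(S) = F_r$; equivalently (by Reynolds' work, or by the structure theory of arational trees) such a $[T]$ has an associated conjugacy class of a geometric free factor system coming from a surface subgroup, and only finitely many $\out$-orbits of such surface structures occur. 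So $Z$ decomposes as a countable union over $\out$-orbits of "surface types", and on each such orbit the relevant combinatorial data is $\out$-equivariant.

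The first step is therefore to identify an $\out$-equivariant Borel map, defined $\nu$-a.e.\ on $Z$, recording the "surface" that $[T]$ comes from — for instance, the (finite, up to conjugacy) set of boundary curves, or the point of $\partial\FF$ together with its geometric structure. Second, I would invoke stationarity: if $\nu(Z) > 0$, then the conditional measure $\nu_Z = \nu(\cdot \mid Z)/\nu(Z)$ is carried by an $\out$-invariant Borel subset, and pushing forward the surface-type map gives a $\mu$-stationary probability measure on the countable (hence discrete) set of $\out$-orbits of surface structures — but a stationary measure on a countable set must be supported on finite $\sm$-orbits, i.e.\ on surface structures whose stabilizer-modulo-conjugacy in $\sm$ has finite index in its $\out$-orbit's worth of structures. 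Concretely, there would be a single surface structure $\Sigma$ (up to the relevant equivalence) fixed by some finite-index subsemigroup, and in particular fixed by $\psi$ and by some nongeometric fully irreducible $\vphi\in\sm$ guaranteed by hypothesis. Third, derive a contradiction: a fully irreducible outer automorphism that preserves (the conjugacy class of) a surface structure $\Sigma$ with $\pi_1(S)=F_r$ is realized by a mapping class of $S$, hence is pseudo-Anosov on $S$, hence is \emph{geometric} — contradicting the nongeometricity of $\vphi$. (Here one uses that an element of $\out$ preserving the peripheral structure of $S$ lies in the image of $\Mod(S)\to\out$, and that a fully irreducible such element must be pseudo-Anosov.)

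Putting this together: $\nu(Z) = 0$, so $\nu$-a.e.\ $[T]$ is a free arational (indeed uniquely ergodic) tree, which is the claim. The main obstacle I anticipate is making Step~1 genuinely clean: one needs a Borel, $\out$-equivariant, a.e.-defined assignment $[T] \mapsto (\text{surface structure of } T)$ on $Z$ and the finiteness (up to $\out$) of the target, so that "countable $\out$-set" and hence "stationary $\Rightarrow$ finite orbits" applies. This should follow from the fact that a non-free arational $F_r$-tree is dual to a measured lamination on a surface and the classification of such surfaces up to homeomorphism is finite, but I would want to cite this carefully (e.g.\ via the work relating $\partial\FF$ and surface structures, or via the finiteness statements in Gaboriau--Levitt and the theory of geometric trees). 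Everything after Step~1 — stationarity forcing a finite orbit, and fully-irreducible-preserving-a-surface forcing geometricity — is standard and short.
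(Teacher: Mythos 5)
Your approach is correct in outline but genuinely different from the paper's. The paper's proof is short and entirely coarse-geometric: it uses the \emph{co-surface graph} of Dowdall--Taylor, whose Gromov boundary is (after the usual identification of trees with the same dual lamination) exactly the set of free arational trees; the hypothesis that $\sm$ contains a nongeometric fully irreducible makes $\mu$ nonelementary for the $\out$-action on this graph, so Maher--Tiozzo convergence gives that almost every sample path limits to a boundary point of the co-surface graph, hence to a free tree. Your route instead starts from Reynolds' structure theory for arational trees (a non-free arational $F_r$-tree is dual to a filling lamination on a surface with one boundary component), records the resulting surface data as an $\out$-equivariant Borel map into a countable set, and then uses the standard Furstenberg-type fact that a $\mu$-stationary measure on a countable set concentrates on finite $\sm$-orbits, from which you extract a fully irreducible preserving a surface structure and hence geometric --- contradicting nongeometricity. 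Both arguments are valid; the paper's buys brevity by outsourcing everything to the co-surface graph machinery, while yours is more self-contained and makes the role of the ``nongeometric'' hypothesis transparent. Two points to tighten in your write-up: (i) the Borel, a.e.-defined, $\out$-equivariant assignment $[T]\mapsto(\text{peripheral conjugacy class})$ does need a sentence of justification, as you note; and (ii) from a finite $\sm$-orbit you only get that some power $\vphi^k$ fixes the surface structure, so you must supply the small extra argument (via uniqueness of the fixed peripheral class for a geometric fully irreducible and the fact that $\vphi$ commutes with $\vphi^k$) that $\vphi$ itself then fixes it and is therefore geometric --- the phrase ``fixed by a finite-index subsemigroup'' glosses over exactly this step.
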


\begin{proof}
The hypotheses imply that $\mu$ is nonelementary with respect to the action on the co-surface graph (See \cite[Section 2.4]{TT}). By Maher--Tiozzo \cite[Theorem 1.1]{MaherTiozzo}, this means that almost every sample path converges to a point in the boundary of the co-surface graph. By work of Dowdall--Taylor \cite{DT3} the boundary of the co-surface graph is the subspace of $\partial \cv$ consisting of free and arational trees (after identifying trees with the same dual lamination, as in the identification of $\partial \mathcal{FF}$).

Now for a typical sample path $\omega$, $(\omega_n  y_0)_{n\ge0}$ converges to a point $\mathrm{bnd}(\omega) \in \mathcal{UE}$ by Theorem \ref{th:npr}. Since such a path typically projects to a path in the co-surface graph converging to a boundary point represented by a free tree, we see that $\mathrm{bnd}(\omega)$ is also free.
\end{proof}

The additional assumption in Corollary \ref{cor:random_free} on the semigroup generated by the support of $\mu$ is necessary. Without it, the entire random walk could, for example, be contained in some mapping class subgroup of $\out$ in which case almost every limiting tree has nontrivial point stabilizers.

\section{Principal outer automorphisms and \\ fellow traveling folding paths}
\label{sec:fellow_folding}

We now turn to discussing the particular type of outer automorphism, called a \emph{principal} outer automorphism,
that will act as the `seed' of our construction.  The main result of this section (Proposition \ref{prop:same_fold}) proves a strong rigidity property for folding paths that fellow travel the axis of a principal outer automorphism.

The original definition of a principal outer automorphism $\varphi \in \out$ is given in terms of its ideal Whitehead graph \cite{hm11} and the reader can find a complete definition in those terms in \cite{stablestrata} or \cite{KMPT}. Rather than recall the original definition here, we collect the essential properties that we will need and give an alternative characterization.

Recall that a fully irreducible $\varphi\in\Out(F_r)$ is called \emph{ageometric} if the attracting tree $T_+ = T_+^\varphi\in \partial \cv$ is nongeometric, i.e.  $\ind_{geom} T_+^\varphi<2r-2$.
For an ageometric fully irreducible $\varphi\in \Out(F_r)$ the action of $F_r$ on $T_+^\varphi$ is free and has dense $F_r$-orbits.
For $r\ge 3$, a fully irreducible $\varphi\in\Out(F_r)$ is \emph{principal} if $\varphi$ is ageometric with $\ind_{geom} T_+^\varphi=2r-3$, if every branch-point $p\in T_+^\varphi$ has $\val_{T_+}(p)=3$, and if every nondegenerate turn at $p$ in $T_+^\varphi$ is ``taken" by the expanding lamination $\Lambda_\varphi$ of $\varphi$.
For those readers unacquainted with this terminology, this notion essentially amounts to the fact that among all fully irreducible outer automorphisms, principal outer automorphisms are characterized as those which satisfy conditions $(2) - (4)$ in Lemma \ref{lem:principal}. We remark that principal outer automorphisms exist in $\out$ for each $r\ge 3$ \cite[Example 6.1]{stablestrata}.

As a fully irreducible outer automorphism, a principal $\varphi \in \out$ has a periodic folding line $A$ in $\cv$, which we write as $A(t)$ rather than $(A_t)$ as done in Section \ref{back:folding}. Here, $A$ is periodic in the sense that there is a $\lambda >1$ so that $ \varphi^{-1}  A(t) = A(t) \cdot \varphi = A(t + \ln \lambda)$ for all $t\in \mathbb{R}$
\footnote{Note that it is $\varphi^{-1}$ that translates along the forward `folding' direction of $A$ for the left action on $\cv$.}. Note that $\ln \lambda >0$ is the translation length of $\varphi$ in $\cv$.
We refer to $A$ as an \emph{axis} for $\varphi$.

Next we collect properties of the pair $\varphi, A$. Most of these are easily located in the literature.

\begin{lemma} \label{lem:principal}
Suppose that $\varphi \in \out$ is principal and that $A$ is an axis for $\varphi$. Then the following hold.
\begin{enumerate}

\item The folding line $A$ is the \emph{lone axis} for $\varphi$. This means that it is the unique (up to reparameterization) folding line with the property that $\lim_{t\to -\infty}A(t) = [T_-]$ and $\lim_{t\to \infty}A(t) = [T_+]$, where
 $[T_-],[T_+] \in \partial \cv$ are the repelling/ attracting trees for $\varphi$.

\item For all but a discrete collection of times, $A(t)$ is contained in the interior of a maximal simplex (i.e. it is trivalent). Moreover, when $A(t)$ is not trivalent, it has a unique vertex of degree $4$.

\item For all $t \in \mathbb{R}$, $A(t)$ has exactly one illegal turn. Hence, $A$ is a greedy folding line in the sense defined in Section \ref{back:folding}.

\item For all $t \in \mathbb{R}$ for which $A(t)$ is trivalent, every legal turn of $A(t)$ is taken (i.e. it is a turn traversed by the image of the interior of an edge of $A(s)$ under the folding map $A(s) \to A(t)$ for some $s< t$).

\end{enumerate}
\end{lemma}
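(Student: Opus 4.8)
The plan is to treat the four statements by locating each in the existing literature on principal/ageometric fully irreducibles and lone axes, and assembling the pieces. The statements are essentially a repackaging of results of Handel--Mosher \cite{hm11} on lone axes and of \cite{stablestrata,loneaxes} on the axis bundle of a fully irreducible, so the work is in matching hypotheses rather than proving something new. I would organize the argument as follows.

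For (1), I would invoke the lone axis theorem. By a result of Mosher--Pfaff (and Handel--Mosher), an ageometric fully irreducible $\varphi$ whose attracting tree $T_+^\varphi$ satisfies $\ind_{geom}(T_+^\varphi) = 2r-3$ and all of whose branch-points are trivalent has the property that its axis bundle in $\cv$ consists of a single folding line; this is precisely the ``lone axis'' condition, and it applies since a principal $\varphi$ satisfies these hypotheses by definition. Hence there is a unique (up to reparameterization) folding line limiting to $[T_-]$ in backward time and $[T_+]$ in forward time, and we call it $A$. For (3), I would recall that any folding line in the axis bundle is a greedy folding line, and that for the lone axis the number of gates at the (single non-trivalent, i.e. degree-$4$) vertex and the degree-$3$ vertices forces exactly two gates everywhere, so each $A(t)$ has exactly one illegal turn; this is exactly the statement that makes $A$ greedy in the sense of Section \ref{back:folding}. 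I would then derive (2): along a greedy folding line with a single illegal turn, folding proceeds by identifying the initial segments of the two edges in the unique gate containing the illegal turn, and generically the graph $A(t)$ is trivalent (interior of a top simplex in $\cv$); the only times $A(t)$ fails to be trivalent are the discrete set of times at which a fold completes and two degree-$3$ vertices momentarily merge into one degree-$4$ vertex, which are isolated because the combinatorial types along a greedy folding line change only at a discrete set of parameters.

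For (4), I would argue as follows. Since $\varphi$ is principal, every nondegenerate turn at every branch-point of $T_+^\varphi$ is taken by the expanding lamination $\Lambda_\varphi$. The folding line $A$ has the property that leaves of $\Lambda_\varphi$ are realized as legal (hence immersed, bi-infinite) lines in each $A(t)$, and the turns of $A(t)$ crossed by these leaf-lines are exactly the turns that survive to become directions at branch-points in the limit $T_+^\varphi$. When $A(t)$ is trivalent, each vertex has exactly two gates and exactly three legal turns (one per pair of distinct gates among the three directions), and I would show that each of these three legal turns is crossed by a leaf of $\Lambda_\varphi$: indeed, by the principal condition all three directions at the corresponding branch-point of $T_+^\varphi$ are endpoints of leaf segments, and pulling this back along the folding maps $A(t) \to A(s) \to T_+^\varphi$ shows the turn is taken already at level $t$. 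Being taken by $\Lambda_\varphi$ in particular means the turn is traversed by the image of an edge interior of some earlier $A(s)$ under the folding map, which is the assertion of (4).

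I expect the main obstacle to be (4): one must make precise the passage from ``taken by the lamination in $T_+^\varphi$'' to ``taken by a folding map $A(s) \to A(t)$ at finite level'', i.e. to verify that no legal turn of a trivalent $A(t)$ is ``unused''. The cleanest route is probably to combine the fact that $A$ is the lone axis (so its train track structure is forced and rigid) with the characterization of principal $\varphi$ via the ideal Whitehead graph being a single polygon with $2r-1$ sides: the edges of that polygon correspond to the taken turns at the branch-points of $T_+^\varphi$, and the bijection between these and the legal turns along the trivalent locus of $A$ is exactly what is needed. If a fully self-contained argument is wanted, one falls back on the structure of the ideal Whitehead graph and the rotationless train track representative of $\varphi$ from \cite{hm11,stablestrata}; otherwise (4) can simply be cited as part of the lone-axis package for principal automorphisms.
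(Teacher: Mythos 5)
Your treatment of items (1)–(3) is essentially the paper's: (1) is the lone-axis theorem from \cite{loneaxes}, and (2), (3) are cited/derived from \cite{stablestrata} and \cite{KMPT}; the paper cites (2) and (3) independently from those references rather than deriving (2) from (3) as you do, but this is a minor stylistic difference.

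Item (4) has a genuine gap, and also contains two factual errors that would mislead the reader. First, for a trivalent $A(t)$ it is \emph{not} the case that ``each vertex has exactly two gates and exactly three legal turns'': precisely one vertex $v_0$ carries the unique illegal turn and hence has two gates and only two legal turns, while all other vertices have three gates and three (all) legal turns. Second, the ideal Whitehead graph of a principal $\varphi$ is a disjoint union of $2r-3$ triangles, not ``a single polygon with $2r-1$ sides''; the latter describes the geometric (surface) case, which is the opposite of the principal/ageometric situation. These two mis-statements matter because they erase exactly the case split the correct proof needs: the legal turns at the ordinary vertices are handled using the principal condition (every nondegenerate turn at a branch-point of $T_+$ is taken by the lamination), whereas the two legal turns at $v_0$ require a separate direct argument (any edge of $A(s)$ whose image crosses one of $e_1,e_2$ must also cross $e_3$, so both of $\{e_1,e_3\},\{e_2,e_3\}$ are automatically taken). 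You do not treat $v_0$ at all.

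The remaining gap is the passage you yourself flag: converting ``taken by $\Lambda_\varphi$ in $T_+$'' into ``taken under a folding map $A(s)\to A(t)$ at finite level for some $s<t$.'' Your phrase ``pulling this back along the folding maps $A(t)\to A(s)\to T_+$'' points in the wrong direction: the taken-ness at $T_+$ only directly gives information about edges of $A(t_0)$ mapping \emph{forward} over turns that appear at branch-points of $T_+$, and one must then use $\varphi$-periodicity to translate ``turns of $A(t_0+n\ln\lambda)=A(t_0)\cdot\varphi^n$ taken by edges of $A(t_0)$'' into ``turns of $A(t_0)$ taken by edges of $A(t_0-n\ln\lambda)$.'' That periodicity step is what the paper actually does, and your sketch omits it. Finally, (4) cannot ``simply be cited as part of the lone-axis package'': it is not a literature statement and requires the above argument.
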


\begin{proof}
Since $\vphi$ is a principal outer automorphism, by definition, its ideal Whitehead graph $IW(\vphi)$ is the disjoint union of $2r-3$ triangles. Thus, (1) is a direct consequence of \cite[Theorem 4.7]{loneaxes} and the \cite{hm11} definition of an axis bundle.

Similarly, item (2) follows immediately from Lemma 5.1 and Remark 3.11 in \cite{stablestrata}, and item (3) is explained in \cite[Remark 5.4]{KMPT} using the fact that $A$ is a lone axis for $\varphi$ (as in item (1)).

To prove item (4), recall that in the language of Section \ref{back:folding}, $A(t)$ (for $t$ greater than any fixed $t_0 \in \mathbb R$) is a greedy geodesic folding path guided by some optimal morphism $f \colon \widetilde {A(t_0) }\to T_+$, where $T_+$ is the attracting tree for $\varphi$ (as in item (1)). We suppose that $A(t_0)$ is trivalent and let $v_0$ be its unique vertex with an illegal turn (using item (3)). For any other vertex $v$ of $A(t_0)$ and any lift $\widetilde v$ to $\widetilde {A(t_0)}$, $f$ maps $\widetilde v$ to a (necessarily valence $3$) branch-point of $T_+$. From the property that $\ind_{geom} T_+ =2r-3$ we note that $f$ induces a bijection between the set of vertices of $A(t_0)$ other than $v_0$ and the set of orbits of branch-points of $T_+$.
The condition that all nondegenerate turns at $f(\widetilde v)$ are `taken' by the stable lamination means here that for each such turn there is an edge $\widetilde e$ of $\widetilde {A(t_0)}$ whose interior maps over this turn under $f$. In terms of the greedy geodesic folding line $A$, this translates to the statement that for some sufficiently large integer $n$, the folding map $A(t_0) \to A(t_0 + n \ln\lambda) = A(t_0) \cdot \varphi^n$ has the property that the image of each vertex $v \neq v_0$, which is itself a trivalent vertex with all legal turns, has each of its turns taken by some edge of $A(t_0)$.

Since $t_0$ was an arbitrary time for which $A(t_0)$ is trivalent,
using periodicity of the folding line $A$ we see that it only remains to show that the two legal turns of $v_0$ are taken by edges of $A(s)$ under the folding map $A(s) \to A(t_0)$ for some $s<t_0$.
However, this is clear by inspection: If $e_1,e_2,e_3$ are the directed edges out of $v_0$ such that $\{e_1,e_2\}$ is the unique illegal turn in $A(t_0)$, then for $i=1,2$ any open edge of $A(s)$ whose image contains $e_i$ must also contain $e_3$. Since there must be such edges of $A(s)$ for some $s<t_0$, we have that the turns $\{e_1,e_3\}$ and $\{e_2,e_3\}$ are taken, as required. This proves (4) and completes the proof of the lemma.

\end{proof}

We will also require the following lemma which states that along the axis of a principal outer automorphism, bounded length loops are legalized in bounded time. Recall that for a conjugacy class $\alpha$ in $F_r$ and graph $G \in \cv$, $\ell_G(\alpha)$ denotes the length of the immersed representative of $\alpha$ in $G$.

\begin{lemma} \label{cor:principal_legal}
Let $\varphi$ be a principal outer automorphism with lone axis $A$.
For each $l\ge 0$ there is a $D\ge 0$ such that if
$\alpha$ is a conjugacy class in $F_r$ such that $\ell_{hA(t_0)}(\alpha) \le l$ (for some $h\in \out$), then the immersed representative of $\alpha$ in $hA(t)$ is legal for all $t \ge t_0+ D$.
\end{lemma}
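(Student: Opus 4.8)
The plan is to reduce to the case $h = 1$ by equivariance (the statement is invariant under precomposing the whole axis $A$ with $h \in \out$), and then exploit periodicity of $A$ under $\varphi^{-1}$ together with the fact that legality is preserved by folding. First I would fix $l \ge 0$ and consider, for each conjugacy class $\alpha$, the function $t \mapsto \ell_{A(t)}(\alpha)$. Since $A$ is a geodesic folding line and the folding maps are $1$-Lipschitz (on volume-$1$ representatives after reparameterization, volumes are fixed and lengths can only decrease along a folding path up to the legal/illegal bookkeeping), I want to argue that once $\alpha$ becomes legal at $A(t)$ for the train track structure there, it stays legal for all larger $t$ — this is exactly the ``greedy folding preserves legality'' fact used in the proof of Proposition \ref{prop:folds_to}(2)(a), since $A$ is greedy by Lemma \ref{lem:principal}(3). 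So it suffices to produce a uniform time $D = D(l)$ after which every conjugacy class of length $\le l$ at $A(t_0)$ has become legal, and then legality persists.

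The key step is the uniform bound. Here I would use the periodicity $A(t + \ln\lambda) = A(t)\cdot\varphi$ together with the fact that there are only \emph{finitely many} conjugacy classes of length $\le l$ in any fixed $A(t_0)$ — but the subtlety is that $t_0$ ranges over all of $\mathbb{R}$, so I cannot literally fix a finite set. Instead I would argue as follows. By periodicity it is enough to prove the statement for $t_0$ in a fundamental domain $[0, \ln\lambda]$, since the case of general $t_0$ follows by applying a power of $\varphi$ (translating $\alpha$ correspondingly and using that legality is an $\out$-equivariant notion along the periodic line). On the compact interval $[0,\ln\lambda]$ the set of conjugacy classes $\alpha$ with $\ell_{A(t_0)}(\alpha) \le l$ for some $t_0 \in [0,\ln\lambda]$ is finite: indeed the function $t_0 \mapsto \ell_{A(t_0)}(\alpha)$ is continuous, so $\{\alpha : \min_{t_0 \in [0,\ln\lambda]} \ell_{A(t_0)}(\alpha) \le l\}$ is a finite set because $A(0)$ and $A(\ln\lambda)$ are $\ds$-bounded apart and lengths are controlled by the Lipschitz distance. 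For each of these finitely many $\alpha$, the loop $\alpha$ eventually becomes legal along the folding ray $A(t)$, $t \ge t_0$: this is because $A$ is the lone axis with $\lim_{t\to\infty}A(t) = [T_+]$ where $T_+$ is free and arational, so by Proposition \ref{prop:folds_to}(2)(b) applied to (a rescaling of) $A$ as a folding ray to $T_+$, only a legal loop can have bounded length in the limit — equivalently, any loop that stays illegal forever would have length tending to $0$ in $T_+$, forcing it into the dual lamination $L(T_+)$, which is impossible for an arational tree since $\alpha$ is carried by (in fact equals) a conjugacy class and arationality says no leaf of $L(T_+)$ is carried by a proper free factor; more simply, an illegal-forever loop cannot have positive stable length, contradicting that $\alpha$ is a fixed nontrivial conjugacy class realized in the limiting tree $T_+$. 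Taking the maximum of the (finitely many) legalization times over this finite set of $\alpha$ and over $t_0 \in [0,\ln\lambda]$ gives the desired $D$.

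The main obstacle I anticipate is making the ``uniform over $t_0$'' step fully rigorous: one must be careful that translating $t_0$ by a period $\ln\lambda$ via $\varphi$ genuinely identifies the legalization problem for $(\alpha, t_0)$ with that for $(\varphi^{\mp 1}\alpha, t_0 \mp \ln\lambda)$, and that the bound $D$ obtained on the fundamental domain transfers without loss. This is a routine equivariance argument using $A(t)\cdot\varphi = A(t + \ln\lambda)$ and the $\out$-equivariance of the train track structures along $A$, but it needs to be stated cleanly. A secondary point is to cite the correct ``legality is preserved under greedy folding'' statement — this is implicit in Section \ref{back:folding} (the circuit $f_{0,s}(\gamma)$ is legal in $\widehat G_s$ whenever $\gamma$ is legal in $G_0$, which is used in the proof of Proposition \ref{prop:folds_to}(2)(a)) and should be invoked directly rather than reproved. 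With these two points handled, the rest is bookkeeping.
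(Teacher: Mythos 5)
The gap is in the step where you conclude that each of the finitely many conjugacy classes $\alpha$ with $\ell_{A(t_0)}(\alpha) \le l$ eventually becomes legal along $A(t)$. Your justification --- ``an illegal-forever loop cannot have positive stable length,'' hence would have translation length $0$ in $T_+$, contradicting freeness of $T_+$ --- is not established, and the implication ``illegal forever $\Rightarrow \|\alpha\|_{T_+} = 0$'' is false in general. If the tightened image $\alpha_t$ has some maximal legal segment of length $> 2$ at some time, then by the Bestvina--Feighn derivative formula (Lemma 4.4 / Corollary 4.8 of \cite{bf11}) that legal segment grows like $e^t$ in the normalized path, and consequently $\|\alpha\|_{T_+} > 0$ even though illegal turns of $\alpha_t$ may persist. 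The problem case is exactly a loop containing a persistent Nielsen-path-like illegal subpath glued to a long legal segment; such a loop has positive limiting translation length, so your contradiction never materializes. This is not a cosmetic issue: the legalization conclusion genuinely fails for train track maps with periodic Nielsen paths, so some hypothesis ruling them out must enter the argument, and yours does not.

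The paper's proof pins this down by working with an honest train track representative $f \colon \Lambda \to \Lambda$ of $\varphi$ on a graph $\Lambda = A(t_1)$ for suitable $t_1 \in [t_0, t_0 + \ln\lambda]$, invoking \cite[Proposition 4.11]{stablestrata} to conclude that a \emph{principal} $\varphi$ has \emph{no periodic Nielsen paths}, and then applying \cite[Proposition 3.1]{bf94} to get, for each loop $\beta$, a power $N_\beta$ with $[f^{N_\beta}(\beta)]$ legal; the bound $D = (\ln\lambda)(N+1)$ then comes from maximizing $N_\beta$ over the finitely many $\beta$ of bounded length. The reduction to $h=1$, the use of periodicity and a fundamental domain, the finiteness of the relevant conjugacy classes, and taking a maximum all match the paper's bookkeeping and are fine as you have them. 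What is missing is precisely the train-track/no-Nielsen-path input replacing your $T_+$-freeness shortcut; since the statement would be false without excluding periodic Nielsen paths (and this exclusion is a nontrivial consequence of principality), it cannot be sidestepped.
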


\begin{proof}
By applying the isometry $h \in \out$ of $\cv$, it suffices to prove the lemma for $h=1$.

There is some $t_1 \in [t_0 , t_0 + \ln \lambda]$ such that the folding map $A(t_1) \to A(t_1+\ln \lambda) = A(t_1) \cdot \varphi$, which we relabel as $f \colon \Lambda \to \Lambda$, is a train track representative of $\varphi$ mapping vertices to vertices. Note that if $\ell_{A(t_0)}(\alpha) \le l$, then $\ell_{A(t_1)}(\alpha) \le \lambda l$. 

According to \cite[Proposition 4.11]{stablestrata}, since $\varphi$ is principal there are no periodic Nielsen paths in $\Lambda$. Hence we may apply \cite[Proposition 3.1]{bf94}, which states that for any loop $\beta$ in $\Lambda$ there is an $N_\beta \ge0$ such that $[f^{N_\beta}(\beta)]$ (i.e. the tightened image of $f^{N_\beta}(\beta)$ in $\Lambda$) is legal. Let
\[
N = \max \{N_\beta \colon \ell_{A(t_1)}(\beta) \le \lambda l \}.
\]
Then our proof is completed by setting $D= (\ln \lambda) (N+1)$.
\end{proof}

We will next turn to prove our rigidity result concerning folding paths that fellow travel the lone axis $A$.
First we describe the precise definition of fellow traveling that we
will use.  

\begin{df}[Fellow traveling] Let $L \geqslant 0$ and
$\rho \geqslant 0$, and let $\gamma \colon I \to \cv$ and
$\gamma' \colon I' \to \cv$ be geodesics.
\begin{enumerate}
\item Let $t, t' \in \R$ such that $[t, t + L] \subseteq I$, and
$[t' , t' + L] \subseteq I'$, and for each $s \in [0, L]$,
$\ds (\gamma(t + s), \gamma' (t' + s)) \leqslant \rho$.  We then say
that $\gamma \vert_{[t , t + L]}$ and $\gamma \vert_{[t' ,t' + L]}$
$\rho$-fellow travel.
\item We say that $\gamma$ and $\gamma'$ $\rho$-fellow travel for
length $L$ if there exist $t$, $t'$ such that
$\gamma \vert_{[t, t + L]}$ and $\gamma \vert_{[t', t' + L]}$
$\rho$-fellow travel.

\end{enumerate}
\end{df}

We remark that here and throughout, fellow traveling in $\cv$ is
always meant with respect to the symmetric metric, and furthermore
this definition of fellowing traveling takes in to account the
orientation of the geodesic.

\smallskip

Let $(G_t)$ be a geodesic folding path.
For the statement of the next proposition, we say that a nondegenerate turn in $G_a$ is \emph{being folded} (at time $t=a)$ if the image of the turn under the folding maps $G_a \to G_b$ is degenerate for \emph{any} $b>a$.

\begin{proposition} \label{prop:same_fold}

Suppose that $\varphi \in \out$ is a principal outer automorphism with
lone axis $A$. Then there exist constants $\epsilon_0, K_0 \ge 0$ such
that if $(G_t)$, for $t \in [t_1,t_2]$, is a greedy geodesic folding
path in $\cv$, and if there is an $h \in \out$ such that $(G_t)$
$\epsilon_0$-fellow travels $A' = hA$ for length $t_2 - t_1$, then
the following holds: For any $t \in (t_1 + K_0, t_2)$ and
$s\in \mathbb{R}$ such that
\begin{itemize}
\item $G_t$ is trivalent,
\item $A'(s)$ is trivalent and in the same open simplex as $G_t$, and
\item $\phi_s \colon A'(s) \to G_t$ is a rescaling homeomorphism topologically identifying these graphs,
\end{itemize}
we have that a turn in $A'(s)$ is being folded if and only if its image under $\phi_s$ is being folded in $G_t$. Hence, $\phi_s$ preserves the train track structures in the sense that it maps legal turns to legal turns.

\end{proposition}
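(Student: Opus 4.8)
The plan is to leverage the rigidity of the lone axis $A$—specifically items (2), (3), (4) of Lemma~\ref{lem:principal}—together with the fact that $\epsilon_0$-fellow traveling for a long time forces the combinatorial structure along $(G_t)$ to match that along $A'$. The first step is to quantify "fellow traveling in the symmetric metric for a long time" into a genuinely local statement: by properness of the folding path and the fact that each $G_t$ (and each $A'(s)$) sits in a simplex, there is a bound on the number of simplices $(G_t)$ and $A'$ can pass through in a unit of time, so $\epsilon_0$ small forces $(G_t)$ and $A'$ to traverse the \emph{same} sequence of open simplices, with the rescaling homeomorphisms $\phi_s$ commuting (up to the coarse errors absorbed by $K_0$) with the folding maps. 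This is where the constant $K_0$ enters: one needs to flow far enough past $t_1$ that the folding map $G_{t_1}\to G_t$ has "forgotten" the initial configuration and the turns being folded at time $t$ depend only on the local combinatorics, which by fellow traveling agree with those of $A'$ near the corresponding parameter.

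Next I would argue the "if and only if" at the level of a single trivalent $G_t$. Since $A'(s)$ is trivalent and in the same open simplex as $G_t$, and $\phi_s$ is the (essentially unique) topological identification, a turn $\tau$ in $A'(s)$ corresponds to a turn $\phi_s(\tau)$ in $G_t$. By Lemma~\ref{lem:principal}(3), $A'(s)$ has exactly one illegal turn, and by (4) every legal turn of $A'(s)$ is \emph{taken}. The key observation is that a turn is "being folded at time $s$" precisely when it is the illegal turn—a turn that is legal and taken cannot be folded, because being taken means it is traversed by the image of an edge interior under a past folding map, and the folding maps are locally injective on edges, so a legal taken turn stays legal going forward. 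Conversely the unique illegal turn is exactly the one the greedy folding folds next. So along $A'$, "being folded" $\iff$ "illegal" $\iff$ "the unique distinguished turn at the distinguished vertex." I then transport this to $(G_t)$: fellow traveling and the matching of folding maps past time $t_1+K_0$ forces $G_t$ to also have exactly one illegal turn, located at $\phi_s$ of the illegal turn of $A'(s)$, and all of its legal turns to be taken (here I would invoke Proposition~\ref{prop:folds_to}-type control on the limiting tree together with the $\ind_{geom}=2r-3$ rigidity, or more directly cite that the folding path, fellow traveling a lone axis of a principal automorphism, inherits the train-track structure—this is essentially the content being proved, so the honest route is the local combinatorial comparison). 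Thus a turn of $A'(s)$ is being folded iff its $\phi_s$-image is the illegal turn of $G_t$ iff its image is being folded in $G_t$.

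Finally, the statement that $\phi_s$ preserves train-track structures, i.e. maps legal turns to legal turns, is then immediate: the legal turns of $A'(s)$ are exactly the non-folded turns, which map under $\phi_s$ to non-folded turns of $G_t$, and a non-folded turn in a greedy folding path is legal (a turn that is never folded going forward must be legal for the train-track structure pulled back from the limiting tree). Since $A'(s)$ and $G_t$ are both trivalent with exactly one illegal turn, $\phi_s$ carries the one illegal turn to the one illegal turn and the two legal turns to the two legal turns at each remaining vertex.

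The main obstacle, I expect, is the first step: extracting from "$\ds$-fellow traveling for length $t_2-t_1$ with constant $\epsilon_0$" a genuine combinatorial synchronization—that $(G_t)$ and $A'$ pass through the same open simplices in the same order and that the identifications intertwine the folding maps up to the slack $K_0$. This requires the discreteness of simplex-crossing times (Lemma~\ref{lem:principal}(2) for $A$, and a general finiteness statement for greedy folding paths), a lower bound on how long a greedy folding path must stay in any one simplex, and a Lipschitz-type control showing small symmetric distance forces containment in the same simplex once one is in the interior. Once this synchronization is in hand, the rest is the soft combinatorial argument above using Lemma~\ref{lem:principal}(3)--(4).
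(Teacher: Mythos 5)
Your proposal correctly identifies the relevant ingredients from Lemma~\ref{lem:principal} (one illegal turn, all legal turns taken, trivalence off a discrete set) and correctly observes that along a greedy folding path, ``being folded'' is the same as ``illegal.'' But the crucial step---\emph{why} the illegal turn of $G_t$ sits at $\phi_s$ of the illegal turn of $A'(s)$---is precisely where your argument stops. You write that ``fellow traveling and the matching of folding maps past time $t_1+K_0$ forces $G_t$ to also have exactly one illegal turn, located at $\phi_s$ of the illegal turn of $A'(s)$,'' and then acknowledge this is ``essentially the content being proved.'' That is a genuine gap: two greedy folding paths can pass through the very same sequence of open simplices yet fold completely different turns, so matching simplex sequences alone does not synchronize the train-track structures. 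Nothing in your proposal rules this out.

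The paper closes this gap with a witness argument you do not supply. Take a legal candidate $\alpha$ in $G_{t_1}$: a legal loop of length at most $2$. Legality is preserved along the folding path, so $\alpha$ is legal in every $G_t$. The bound $\epsilon_0\le\log 2$ makes $\ell_{A'(s_1)}(\alpha)\le 4$, and Lemma~\ref{cor:principal_legal} forces $\alpha$ to become legal in $A'(s)$ for $s\ge s_1+D$. Item (4) of Lemma~\ref{lem:principal} (legal turns of trivalent $A'(s)$ are taken) then upgrades this: after a further definite time $D_2$, the immersed loop $\alpha$ crosses \emph{every} legal turn of $A'(s)$, i.e.\ all but the unique illegal one. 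Transporting $\alpha$ through $\phi_s$ to the trivalent $G_t$ in the same simplex, $\alpha$ is a legal loop in $G_t$ that misses exactly one turn, which must therefore be the illegal turn of $G_t$; this pins it down to the $\phi_s$-image of the illegal turn of $A'(s)$. The constant $K_0$ is chosen (via a compactness/injectivity-radius estimate) so that $t>t_1+K_0$ forces the relevant $s$ to satisfy $s\ge s_1+D_2$. Your intuition that $K_0$ lets the path ``forget'' its initial configuration is the right informal picture, but only the explicit witness loop converts fellow traveling into an identification of the illegal turn; without it the proposal does not establish the statement.
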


\begin{proof}
By applying the appropriate isometry $h \in \out$, we note that it suffices to prove the proposition for $A' = A$.

Begin by choosing $\epsilon_0 \le \log(2)$ so that $(G_t)$ passes through the same sequence of open maximal simplices as $A$. Also, fix $D \ge 0$, provided by Lemma \ref{cor:principal_legal}, to be such that any loop in $A(t)$ of length no more than $4$ is legal in $A(t+D)$.

Let $\alpha$ be a conjugacy class of $F_r$ represented by a legal loop in $G_{t_1}$ such that $\ell_{G_{t_1}}(\alpha) \le 2$. (Such an $\alpha$ is sometimes called a legal candidate in the literature.)

Since $\epsilon_0 \le \log(2)$, there is a $s_1 \in \mathbb{R}$ such that $\ds(G_{t_1, }A(s_1)) \le \epsilon_0 \le \log(2)$, and so $\ell_{A(s_1)}(\alpha) \le 4$. 
By our choice of $D$ in the above paragraph, $\alpha$ is legal in $A(s)$ for all $s \ge s_1+D$. Moreover, there is a constant $D_2 \ge D$, depending only on the axis $A$, such that $\alpha$ crosses all legal turns in $A(s)$ for all $s \ge s_1+D_2$ when $A(s)$ is trivalent. This is because when $A(s)$ is trivalent, all legal turns are taken (Lemma \ref{lem:principal}), and so the difference $D_2-D$ depends only on the stretch factor of $g$ and 
the power needed so that every edge maps over all other edges and takes all legal turns.

Hence, for all trivalent $A(s)$ with $s \ge s_1+D_2$, $\alpha$ crosses all of the legal turns in $A(s)$ and so $\alpha$ crosses all but the unique illegal turn. If $t\in [t_1,t_2]$ is such that $G_t$ lies in the same open maximal simplex as $A(s)$, then $\alpha$, which is legal in $G_t$, crosses all but one turn in $G_t$. This conclusion holds  because $\phi_s \colon A(s) \to G_t$ is a homeomorphism and so maps the immersed representative of $\alpha$ in $A(s)$ to the immersed representative of $\alpha$ in $G_t$. Hence, the one turn in $G_t$ not taken by $\alpha$ must be the unique illegal turn in $G_t$. This implies that $\phi_s \colon A(s) \to G_t$ preserves legality, whenever $s \ge s_1+D_2$ and $A(s)$ and  $G_t$ are in the same maximal open simplex.

To complete the proof of the proposition, it suffices to find a $K_0 \ge 0$ such that if $t > K_0 +t_1$, then any $A(s)$ in the same maximal open simplex with $G_t$ necessarily has $s \ge s_1+D_2$.
For this, let $0<\epsilon$ be the minimum injectivity radius (i.e. length of shortest essential loop) along the periodic line $A$. Note that if the Lipschitz distance from $G_t$ to a graph in $A$ is less than $\epsilon_0$, then the injectivity radius of $G_t$ is at least $e^{-\epsilon_0}\epsilon$. By compactness, the diameter of the subspace of a simplex consisting of graphs with injectivity radius at least $e^{-\epsilon_0}\epsilon$ is bounded by some constant $\mathfrak{D} \ge 0$. Then setting $K_0 = D_2 + \mathfrak{D} +2 \epsilon_0$ completes the proof by the triangle inequality.
\qedhere
\end{proof}

In order to apply Proposition \ref{prop:same_fold} we will require the following lemma:

\begin{lemma}\label{lem:open_balls}
Suppose that $\varphi \in \out$ is a principal outer automorphism with
lone axis $A$.
There exists $\epsilon_1 >0$ such that for every $t \in \mathbb{R}$ there is $x \in \mathbb{R}$ with $t < x < t+1$ so that the symmetric $\epsilon_1$-ball about $A(x)$ lies in the interior of a maximal simplex.
\end{lemma}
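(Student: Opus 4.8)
The plan is to exploit the periodicity of $A$ together with Lemma \ref{lem:principal}(2), which says that $A(t)$ is trivalent — i.e. in the interior of a maximal simplex — for all but a discrete set of times. Since $\varphi^{-1} A(t) = A(t + \ln\lambda)$, the set $Z = \{t \in \mathbb{R} : A(t) \text{ is not trivalent}\}$ is invariant under $t \mapsto t + \ln\lambda$ and discrete, hence $Z$ is a finite union of cosets of $\ln\lambda\,\mathbb{Z}$; in particular $Z$ is uniformly discrete: there is a $\delta > 0$ such that any interval of length $\delta$ contains at most one point of $Z$. First I would fix, for each $t$, a subinterval $(a, b) \subseteq (t, t+1)$ of length $\ge \delta/2$ (say) containing no point of $Z$; its midpoint-adjacent point gives a candidate $x$ with $A(x)$ trivalent and, moreover, $A(s)$ trivalent for all $s$ in a fixed-radius neighborhood of $x$ depending only on $\delta$.

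The next step is to upgrade "trivalent" (an open but possibly non-uniform condition) to "symmetric $\epsilon_1$-ball stays in the maximal simplex," uniformly in $t$. The point $A(x)$ lies in the interior of some maximal simplex $\Sigma$, so there is some $\epsilon > 0$ with the symmetric $\epsilon$-ball about $A(x)$ contained in $\Sigma$; the content is that $\epsilon$ can be taken independent of $t$. For this I would use a compactness argument exactly in the spirit of the one used at the end of the proof of Proposition \ref{prop:same_fold}: by periodicity, it suffices to choose $x$ inside the compact arc $A([0, \ln\lambda])$, i.e. to run the argument for $t$ in one fundamental domain and then translate by powers of $\varphi$. On the compact set of times $s$ that are within the fixed distance (from the previous paragraph) of a trivalent $A(s)$, the function $s \mapsto (\text{symmetric distance from } A(s) \text{ to the boundary of its maximal simplex})$ is positive and continuous, hence bounded below by some $\epsilon_1 > 0$. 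Translating by $\varphi^n$ — which acts by isometries of the symmetric metric $\ds$ and permutes simplices — transports this bound to all of $\mathbb{R}$, giving the uniform $\epsilon_1$.

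Assembling: given any $t \in \mathbb{R}$, write $t = t' + n\ln\lambda$ with $t' \in [0, \ln\lambda)$; apply the fundamental-domain construction to produce $x' \in (t', t'+1)$ with the symmetric $\epsilon_1$-ball about $A(x')$ inside a maximal simplex; then $x := x' + n\ln\lambda \in (t, t+1)$ and, since $\varphi^{-n}$ is an isometry of $(\cv, \ds)$ carrying maximal simplices to maximal simplices and $A(x') $ to $A(x)$, the symmetric $\epsilon_1$-ball about $A(x)$ also lies in the interior of a maximal simplex.

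The main obstacle is the uniformity of $\epsilon_1$: a priori the radius of the ball one can fit inside a maximal simplex could shrink as $A(x)$ approaches a face, and one has to rule this out along the entire ray. This is precisely what periodicity buys — it reduces the question to a compact arc — so the real work is just setting up the compactness argument cleanly (noting that distance-to-the-boundary-of-the-maximal-simplex is continuous and positive on trivalent graphs, and that $\out$ acts by $\ds$-isometries permuting the simplicial structure). Everything else is bookkeeping with the discreteness of $Z$ from Lemma \ref{lem:principal}(2).
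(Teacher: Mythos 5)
Your proposal is correct and takes essentially the same approach as the paper: periodicity reduces to a compact fundamental domain, discreteness of the bad set $Z$ (from Lemma \ref{lem:principal}(2)) is invoked, and a compactness argument on the continuous function $s \mapsto \ds(A(s), K)$ (where $K$ is the closed complement of the interiors of maximal simplices) yields the uniform $\epsilon_1$. The only cosmetic difference is bookkeeping: the paper extracts $\epsilon_1$ by shrinking the nested compacts $C_\epsilon = A^{-1}(N_{\le\epsilon}(K)) \cap I$ until their components have diameter $<1$, whereas you first fix a $\delta$-separation from $Z$ and then take the minimum of the distance function on the resulting compact set; both come to the same thing.
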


\begin{proof}
By Lemma \ref{lem:principal}.2, the set 
\[
Z = \{t \in \mathbb{R} : A(t) \text{ is not in the interior of a maximal simplex}\}
\]
 is a discrete subset of $\mathbb{R}$. Moreover, since $A(t) \cdot \varphi = A(t+\log(\lambda))$ for some $\lambda>1$, $Z$ is invariant under translation by $\log(\lambda) >0$. Hence, it suffices to assume that $t$ lies in the compact interval $[0, \log(\lambda)]$. Set $I = [0,\log(\lambda)+1]$ and note that $Z \cap I$ is finite.

Let $K$ be the complement in CV of the interiors of maximal simplices. Clearly $K$ and its closed symmetric $\epsilon$-neighborhood $N_\epsilon = N_{\le \epsilon}(K)$ are closed. 

The preimage $C_\epsilon = A^{-1}(N_\epsilon) \cap I$ is compact. It is easy to see that $\bigcap_{\epsilon>0}  C_\epsilon = Z \cap I$ 
since for any $t \notin Z$ the symmetric distance from $A(t)$ to $K$ is positive. 
Hence, we can choose $\epsilon_1>0$ sufficiently small so that each component of $C_{\epsilon_1}$ has diameter less than $1$. For such an $\epsilon_1$ and any $t \in [0, \log(\lambda)]$ there is an $x \in I$ with $t < x < t+1$  so that $x$ is not in $C_{\epsilon_1}$. Consequently, $A(x)$ has symmetric distance greater than $\epsilon_1$ from $K$ and so the symmetric $\epsilon_1$-ball about $A(x)$ is contained in the interior of a maximal simplex, as required. This completes the proof.
\end{proof}

\section{Valencies of branch-points and \\ eventually legalizing folding lines}
\label{sec:condition*}

We begin by stating a convention that we will refer to throughout this section.

\begin{conv}\label{conv:foldline}
For the remainder of this section, we assume that $[T]\in \partial\cv$ is given by a \emph{free} $F_r$-tree $T$ (where $r\ge 3$), that $G_0 \in \cv$, and that $f \colon T_0\to T$ is an optimal morphism from $T_0 = \widetilde G_0$ to $T$.
This data produces the greedy isometric folding path $(\widehat G_s)_{s\in J}$ in $\widehat\cv$ determined by $f$ starting at $\widehat G_0=G_0$.

Recall from Section \ref{back:folding} that the folding path $(\widehat G_s)_{s\in J}$ comes together with optimal morphisms $f_s\colon T_s=\widetilde{\widehat G_s}\to T$ (where $s\in J$), with ``folding maps" $\wh g_{s,s'} \colon  \wh G_s\to \wh G_{s'}$ for all $s,s'\in J, s\le s'$, and their lifts $f_{s,s'} \colon T_s\to T_{s'}$ such that $f_{s'}\circ f_{s,s'}=f_s$. We also have the corresponding geodesic folding path $(G_t)_{t\in J}$ in $\cv$.

Finally, recall that each $\widehat G_s$ is given the pullback train track structure $\mathcal T_s$ defined by the map $f_s$; although we note that because the folding path is greedy, the gate structure is unambiguous.
By part (2)(a) of Proposition~\ref{prop:folds_to}, the interval $J$ has the form $[0,M)$ for some real number $M>0$.

\end{conv}

We record the following useful general property of our folding paths.

\begin{lem}\label{lem:val}
Let $T$, $f \colon T_0\to T$, and $(\widehat G_s)_{s\in J}$  be as in Convention~\ref{conv:foldline}. Let $s\in J$ and let $x\in T_s$ be a vertex with $k\ge 3$ gates with respect to $\mathcal T_s$. Then $p=f_s(x)\in T$ is a branch-point with $\val_T(p)\ge k\ge 3$.
\end{lem}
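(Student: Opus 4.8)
The plan is to identify the gates at $x$ with distinct directions at $p=f_s(x)$ in the target tree $T$, by unwinding the definition of the pullback gate structure $\mathcal{T}_s$.

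First I would recall that, being a morphism, $f_s\colon T_s\to T$ sends each oriented edge $e$ originating at $x$ isometrically onto a nondegenerate segment of $T$ issuing from $p=f_s(x)$. Hence $e$ determines a well-defined direction at $p$, namely the connected component of $T\setminus\{p\}$ that contains a sufficiently short initial subsegment of $f_s(e)$; denote this direction by $\delta(e)$. Here one uses the elementary fact about $\R$-trees that two nondegenerate segments issuing from a common point $p$ lie in the same component of $T\setminus\{p\}$ if and only if they share an initial subsegment of positive length.

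The key step is then to observe that, for oriented edges $e_1,e_2$ originating at $x$, the restriction of $f_s$ to the turn-path $e_1^{-1}e_2$ (which runs from the far endpoint of $e_1$ through $x$ to the far endpoint of $e_2$) is injective if and only if $\delta(e_1)\neq\delta(e_2)$: the image of $e_1^{-1}e_2$ is the concatenation of the two segments $f_s(e_1)^{-1}$ and $f_s(e_2)$ issuing from $p$, and this concatenation is an embedded arc precisely when those segments meet only at $p$, i.e. precisely when $\delta(e_1)\neq\delta(e_2)$. By the definition of the pullback gate structure $\mathcal{T}_s$, the turn $\{e_1,e_2\}$ is legal (equivalently, $e_1,e_2$ lie in distinct gates at $x$) exactly when $f_s|_{e_1^{-1}e_2}$ is injective. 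Combining, $e_1$ and $e_2$ lie in the same gate of $\mathcal{T}_s$ if and only if $\delta(e_1)=\delta(e_2)$; that is, the gates at $x$ are exactly the nonempty fibers of the map $e\mapsto\delta(e)$ from oriented edges at $x$ to directions at $p$ in $T$.

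Since $x$ has $k$ gates with respect to $\mathcal{T}_s$, the map $e\mapsto\delta(e)$ takes $k$ distinct values, so $T\setminus\{p\}$ has at least $k$ connected components; thus $\val_T(p)\ge k\ge 3$, and in particular $p=f_s(x)$ is a branch-point of $T$, as claimed. I expect the only delicate point to be the equivalence in the previous paragraph, where one must carefully use that $f_s$ is isometric on edges together with the $\R$-tree structure of $T$ to pass between non-injectivity of $f_s$ on the turn $\{e_1,e_2\}$ and coincidence of the two directions at $p$; everything else is routine bookkeeping.
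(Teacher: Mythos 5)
Your proof is correct and follows essentially the same approach as the paper: choose one oriented edge per gate at $x$, use that a legal turn between representatives of distinct gates maps injectively under $f_s$, and conclude that the corresponding segments issuing from $p$ give distinct directions, whence $\val_T(p)\ge k$. The only difference is cosmetic: you phrase the argument as an equivalence (gates are precisely the fibers of $e\mapsto\delta(e)$), whereas the paper uses only the one implication actually needed.
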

\begin{proof}
Let $e_1,\dots, e_k$ be edges of $T_s$ originating at $x$ and representing the $k$ distinct gates at $x$. Then $f_s$ maps each $e_i$ isometrically to a nondegenerate geodesic segment $f_s(e_i)=[p,p_i]_T$ in $T$. For $i\ne j$ the edges $e_i,e_j$ are in different gates; therefore the turn $\{e_i,e_j\}$ is legal and the path $e_i^{-1}e_j$ is mapped by $f_s$ injectively to $T$. This means that for $i=1,\dots, k$ the segments $[p,p_i]_T$ represent $k$ distinct directions at $p$ in $T$. Hence $\val_T(p)\ge k\ge 3$, as required.
\end{proof}

Lemma~\ref{lem:val} motivates the following definition:

\begin{df}[Representing branch-points]
Let $T$, $f \colon T_0\to T$, and $(\widehat G_s)_{s\in J}$  be as in Convention~\ref{conv:foldline}. Let $s\in J$ and let $x\in T_s$ be a vertex with $k\ge 3$ gates with respect to $\mathcal T_s$,  and let $x_0\in V\widehat G_s$ be the projection of $x$ to $\widehat G_s$.  Let $p=f_s(x)\in T$ (so that, by Lemma~\ref{lem:val}, $p$ is a branch-point of $T$ of valency $\ge k$).

In this case we say that the branch-point $p\in T$ is \emph{represented} by $x$, and that the $F_r$-orbit of $p$ is is \emph{represented} by $x_0$.

If, moreover, $\val_T(p)= k$, we say that the branch-point $p\in T$ is \emph{faithfully represented} by $x$, and that the $F_r$-orbit of $p$ is \emph{faithfully represented} by $x_0$.
\end{df}

\begin{rk}\label{rk:rep}
Note that if a branch-point $p\in T$ is represented (resp. faithfully represented) by $x\in T_s$ then for each $s'>s$ in $J$, the branch-point $p$ is also represented (resp. faithfully represented) by $f_{s,s'}(x)\in T_{s'}$.
\end{rk}

In general it can happen that in the setting of Lemma~\ref{lem:val} the point $p=f_s(x)\in T$ has some extra directions not coming from the gates at $x$ in $T_s$, that is, that $\val_T(p)>k$, so that $p$ is represented but not faithfully represented by $x$.  (For experts: this is exactly what happens in the presence of periodic INPs in train track maps representing some nongeometric fully irreducible $\phi\in\out$.)

Below we define an additional condition satisfied by some ``good'' folding paths, which will allow us to control and ultimately rule out this kind of behavior.
This condition on folding lines is a central point of this paper.

\begin{df}[Eventually legalizing folding paths]  \label{df:*}
Let $T$, $f\colon T_0\to T$, and $(\widehat G_s)_{s\in J}$  be as in Convention~\ref{conv:foldline}. We say that the folding path  $(\widehat G_s)_{s\in J}$ is \emph{eventually legalizing} if for any $s\in J$ and any immersed finite path $\gamma$ in $\widehat G_s$, there exists $s'\in J, s'>s$ such that the tightened form $\gamma' = [g_{s,s'}(\gamma)]$ of the image of $\gamma$ in $\widehat G_{s'}$ is legal (with respect to $\mathcal T_{s'}$).  In this situation we also say that the greedy geodesic folding path $(G_t)_{t\in J'}$ in $\cv$ determined by $f$ is \emph{eventually legalizing}.
\end{df}

Note that under the assumptions of  Convention~\ref{conv:foldline}, for every $s\in J$ the subset $f_s(T_s)\subseteq T$ is an $F_r$-invariant subtree and therefore $f_s(T_s)=T$ since the action of $F_r$ on $T$ is minimal.

\begin{proposition} \label{gates_star}
Let $T$, $f \colon T_0\to T$, and $(\widehat G_s)_{s\in J}$  be as in Convention~\ref{conv:foldline}.  Assume that the greedy isometric folding path $(\widehat G_s)_{s\in J}$ is eventually legalizing.

Then for each branch-point $p\in T$ there exists some $s\in J$ and a vertex $x_0\in \widehat G_s$ such that $x_0$ faithfully represents the $F_r$-orbit of $p$.
\end{proposition}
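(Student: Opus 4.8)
The plan is to fix a branch-point $p \in T$ and use the eventually legalizing hypothesis to produce a time $s \in J$ and a vertex $x \in T_s$ that faithfully represents $p$. First I would choose a representative $p$ of its $F_r$-orbit. By the Gaboriau--Levitt finiteness result recalled in Section~\ref{sec:index}, since $T$ is a free $F_r$-tree we have $\val_T(p) = n < \infty$, and we may enumerate the $n$ directions $d_1, \dots, d_n$ at $p$. For each $i$ pick a point $q_i$ in the direction $d_i$, so that the segments $[p,q_i]_T$ pairwise meet only at $p$. Since every $f_s \colon T_s \to T$ is surjective (as noted right before the Proposition, using minimality), for some initial $s_0$ we can lift: choose $x \in f_{s_0}^{-1}(p)$ and, by pulling back along $f_{s_0}$, find edge-germs $e_1, \dots, e_n$ at $x$ in $T_{s_0}$ such that $f_{s_0}$ maps (an initial segment of) $e_i$ isometrically into $[p,q_i]_T$. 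Such germs exist because each direction at $p$ must be hit by the surjective morphism from the subtree near a preimage of $p$; I would want to be slightly careful here and possibly pass to the preimage component of a small neighborhood of $p$, but morally the $n$ distinct directions at $p$ pull back to $n$ edge-germs at some vertex $x$ over $p$, since $f_{s_0}$ is a morphism and hence locally injective on edges.

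The key point is then to arrange, for a possibly larger time $s \ge s_0$, that these $n$ germs $e_1, \dots, e_n$ lie in $n$ \emph{distinct} gates at $x$ (here I abuse notation, writing $x$ also for $f_{s_0,s}(x)$ and $e_i$ for $f_{s_0,s}(e_i)$). Once that holds, Lemma~\ref{lem:val} gives $\val_T(p) \ge (\text{number of gates at } x) \ge n = \val_T(p)$, forcing equality, so $p$ is faithfully represented by $x$ (and its orbit by the projection $x_0 \in \widehat G_s$), which is exactly what we want. So the heart of the argument is: after enough folding, the germs mapping to distinct directions at $p$ become legally separated at $x$. To see this, observe that the turn $\{e_i, e_j\}$ at $x$ is illegal with respect to $\mathcal{T}_s$ precisely when the path $e_i^{-1} e_j$ is \emph{not} mapped injectively by $f_s$ — equivalently, it is being folded, i.e.\ $f_{s,s'}$ collapses an initial bite of $e_i$ onto an initial bite of $e_j$ for $s'$ slightly larger than $s$. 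Now consider the two-edge immersed path $\gamma_{ij} = e_i^{-1} e_j$ (descended to the immersed path in $\widehat G_{s_0}$ it represents). By the eventually legalizing property applied to $\gamma_{ij}$, there is a time $s_{ij} > s_0$ at which the tightened image $[g_{s_0, s_{ij}}(\gamma_{ij})]$ is legal in $\widehat G_{s_{ij}}$. Since $f_{s_0}$ separates the directions $d_i \ne d_j$ at $p$, the images $f_{s_0,s}(e_i)$ and $f_{s_0,s}(e_j)$ still map into $[p,q_i]_T$ and $[p,q_j]_T$ respectively and so never get completely consumed; hence the tightened path still emanates from (the image of) $x$ along these two germs, and its legality at $x$ means exactly that $e_i$ and $e_j$ are in different gates there. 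Taking $s = \max_{i<j} s_{ij}$ and using Remark~\ref{rk:rep} (faithful/ordinary representation persists under $f_{s,s'}$), all $\binom{n}{2}$ turns $\{e_i,e_j\}$ at $x$ are simultaneously legal at time $s$, so the $n$ germs occupy $n$ distinct gates.

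The main obstacle I anticipate is the bookkeeping in the previous paragraph: one must be sure that, as we fold forward, (i) none of the germs $e_i$ is entirely folded away before time $s$ — this is where freeness of the action and the fact that the $[p,q_i]_T$ are genuinely distinct directions are used, since $f_s$ restricted to each $e_i$ stays an isometric embedding into the fixed segment $[p,q_i]_T$ under the compatibility $f_{s'} \circ f_{s,s'} = f_s$ — and (ii) tightening $g_{s_0,s}(\gamma_{ij})$ does not cancel the initial germs $e_i^{-1}, e_j$ against each other (again guaranteed because their $f_s$-images point in different directions at $p$, so $e_i^{-1} e_j$ is already reduced near $x$ and cannot backtrack there). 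A clean way to package this is to note that for any $s$, the turn $\{e_i, e_j\}$ at the image of $x$ is legal in $T_s$ iff $f_s|_{e_i^{-1}e_j}$ is injective iff this turn is not being folded at time $s$; since $f_s = f_{s'} \circ f_{s,s'}$ and $f_{s'}$ is a morphism, once the turn is legal it stays legal, so the times $s_{ij}$ can indeed be taken monotone and we may pass to their maximum. With these observations in place the proof is complete.
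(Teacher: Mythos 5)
Your proposal contains a genuine gap at the step where you write ``by pulling back along $f_{s_0}$, find edge-germs $e_1,\dots,e_n$ at $x$ in $T_{s_0}$ such that $f_{s_0}$ maps (an initial segment of) $e_i$ isometrically into $[p,q_i]_T$.'' Surjectivity of the morphism does guarantee that every direction at $p$ is hit by some germ at \emph{some} preimage of $p$, but it does not give you a \emph{single} vertex $x\in f_{s_0}^{-1}(p)$ whose edge-germs cover \emph{all} $n$ directions. The preimage $f_{s_0}^{-1}(p)$ is typically a nontrivial discrete set, and different directions at $p$ may only be visible from different preimage points. (For intuition: this is exactly the phenomenon of periodic INPs for train track maps representing nongeometric fully irreducibles, which the paper alludes to right after Remark~\ref{rk:rep}.) Producing such an $x$ with germs in all $n$ directions is not a preliminary step before the argument --- it is essentially the entire content of the proposition, and it is precisely what the eventually legalizing hypothesis must be used to manufacture.

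Moreover, the way you invoke eventually legalizing afterward is vacuous: if germs $e_i,e_j$ at $x$ already map under the morphism $f_{s_0}$ into the distinct directions $d_i,d_j$ at $p$, then $f_{s_0}(e_i)\cap f_{s_0}(e_j)=\{p\}$ (two geodesic segments issuing from $p$ in different directions meet only at $p$), so $f_{s_0}$ is already injective on $e_i^{-1}e_j$ and the turn $\{e_i,e_j\}$ is \emph{already} legal in $\mathcal T_{s_0}$. So once the germs you want exist, there is nothing left to prove, and applying eventually legalizing to the two-edge paths $\gamma_{ij}$ adds nothing. The paper's argument instead chooses preimages $u,y_1,\dots,y_m\in T_0$ with $f_0(u)=p$ and $f_0(y_i)=q_i$, takes the \emph{full} paths $\beta_i=[u,y_i]_{T_0}$ (whose images in $T$ need not be geodesics), projects to immersed paths $\alpha_i$ in $\widehat G_0$, and applies the eventually legalizing hypothesis to those whole paths. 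Once the tightened images $\tau_i$ are legal, their lifts $\omega_i$ starting at $x=f_{0,s}(u)$ are legal too, so $f_s$ embeds each $\omega_i$ isometrically onto $[p,q_i]_T$; the initial germs of the $\omega_i$ at $x$ are then the desired $n$ germs in distinct gates. That is the step your argument is missing.
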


\begin{proof}
Recall that, by the result of Gaboriau and Levitt, since $T$  is a free $F_r$-tree, every branch-point of $T$ has finite valency, and there are only finitely many $F_r$-orbits of branch-points in $T$ (see Section \ref{sec:index}).

Let $p\in T$ be a branch-point. Thus $3\le \val_T(p)=m<\infty$. Let $q_1,\dots,q_m$ be points in $T$ distinct from $p$ such that the directions at $p$ defined by geodesic segments $[p,q_1]_T,\dots,[p,q_m]_T$ represent all $m$ directions at $p$. In particular, $[p,q_i]_T \cap [p,q_j]_T = \{p\}$ for all $i \neq j$.

Recall that $T_0=\widetilde{\widehat G_0}$ and that $f=f_0:T_0\to T$ is onto. Let $u,y_1,\dots,y_m\in T_0$ be such that $f_{0}(u)=p$ and $f_{0}(y_i)=q_i$. Denote $\beta_i=[u,y_i]_{T_0}$ and denote by $\alpha_i$ the image of $\beta_i$ in $\widehat G_{0}$. Thus each $\alpha_i$ is an immersed path in $\widehat G_{0}$ from some point $v$ (the image of $u$ in $\widehat G_{0}$) to some point $z_i$ (the image of $y_i$ in $\widehat G_{0}$). Note that $f_{0}(\beta_i)$ is a path in $T$ from $p$ to $q_i$, and so this path passes over $[p,q_i]_T$ but we cannot claim yet that $f_{0}(\beta_i)=[p,q_i]_T$.

Since our folding path is eventually legalizing, there exists some $s>0$ in $J$ such that for $i=1,\dots ,m$ the tightened $\wh g_{0,s}$-image $\tau_i$ of $\alpha_i$ in $\widehat G_s$ is legal.  
All $\tau_i$ have the same initial point $v'$ which is the image of $v$ in $\widehat G_s$.

Observe that, for each $i=1,\dots, m$, the tightened $f_{0,s}$-image $\omega_i$ of $\beta_i$ in $T_s=\widetilde{\widehat G_s}$ is the lift of $\tau_i$ starting at $x = f_{0,s}(u)$ and hence legal. (Here, the map $f_{0,s}$ is as in Convention \ref{conv:foldline}.)
This means that $f_s\colon T_s\to T$ is injective on $\omega_i$.
Then $f_s(x)=p$ and $f_s(\omega_i)=[p,q_i]_T$ for $i=1,\dots ,m$.

Since we chose $q_1,\dots ,q_m$ so that the directions at the point $p$ in $T$  defined by $[p,q_1]_T,\dots,[p,q_m]_T$ are distinct, the directions defined by $\omega_1,\dots ,\omega_m$ at $x$ have to be distinct as well.
Otherwise, there would be some $i \neq j$ such that $\omega_i \cap \omega_j$ is nontrivial. But then the image of this overlap $f_s(\omega_i \cap \omega_j)$ would be nontrivial as well, implying that $[p,q_i]_T \cap [p,q_j]_T$ is nontrivial. (Recall that $f_s(\omega_i) =[p,q_i]_T$ and $f_s(\omega_j) =[p,q_j]_T$.) This contradicts our choice of distinct directions at $p$.

Since $m\ge 3$, this means that $x$ is a vertex of $T_s$, and hence $v'$ is a vertex of $\widehat G_s$, and that the directions at $v'$ represented by initial germs of $\tau_1,\dots,\tau_m$ are in $m$ distinct gates for $\mathcal T_s$.

If $v'$ has $k>m$ gates in $\widehat G_s$, that would imply that there is another direction at $x$ in $T_s$ which maps by $f_s$ to a direction at $p$ different from the $m$ directions given by $[p,q_1]_T,\dots,[p,q_m]_T$, contradicting the choice of $m$ and of $q_1,\dots,q_m$. Hence $v'$ has exactly $m$ gates in $\widehat G_s$. Thus the vertex $x\in T_s$ faithfully represents the branch-point $p\in T$, and the vertex $v'\in \widehat G_s$ faithfully represents the $F_r$-orbit of $p$, as required.
\end{proof}

We now come to the main result of this section.

\begin{theorem} \label{th:trivalent}
Let $[T]\in\partial \cv$ be a free $F_r$-tree (where $r\ge 3$), let $T_0\in\cv$, let $f \colon T_0\to T$ be an optimal morphism, and let $(\widehat G_s)_{s\in J}$ be a greedy isometric folding path in $\widehat\cv$ determined by $f$ starting at $T_0$.  Suppose that:
\begin{enumerate}
\item The folding path $(\widehat G_s)_{s\in J}$ is eventually legalizing and
\item for each $s\in J$ there exists some $s'>s$ in $J$ such that the graph $\widehat G_{s'}$ is trivalent.
\end{enumerate}
Then $T$ is trivalent and nongeometric.
\end{theorem}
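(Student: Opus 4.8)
The plan is to use Proposition~\ref{gates_star} to realize every branch-point of $T$ as a faithfully represented vertex along the folding path, then to concentrate all of these at a single trivalent time supplied by hypothesis~(2), and finally to run a vertex count on that trivalent graph.

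First I would record, via the Gaboriau--Levitt theorem (Section~\ref{sec:index}), that the free $F_r$-tree $T$ has only finitely many $F_r$-orbits of branch-points, say represented by $p_1,\dots,p_m$. Applying Proposition~\ref{gates_star} to each $p_i$ yields a time $s_i\in J$ at which the orbit of $p_i$ is faithfully represented by a vertex of $\widehat G_{s_i}$. Since there are finitely many such $s_i$ and $J$ is an interval, I set $s_\ast=\max_i s_i\in J$ and use hypothesis~(2) to pick $s'\in J$ with $s'>s_\ast$ and $\widehat G_{s'}$ trivalent. By Remark~\ref{rk:rep}, after pushing forward along the folding maps $f_{s_i,s'}$, every orbit $[p_i]$ is faithfully represented by some vertex $v_i$ of $\widehat G_{s'}$; concretely, a lift $\widetilde v_i\in T_{s'}$ satisfies $f_{s'}(\widetilde v_i)=p_i$ and has exactly $\val_T(p_i)$ gates with respect to $\mathcal{T}_{s'}$.

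Trivalence of $T$ is then immediate: each $\widetilde v_i$ is a vertex of the trivalent tree $T_{s'}=\widetilde{\widehat G_{s'}}$, hence has degree $3$ and so at most $3$ gates, whence $3\le \val_T(p_i)=\#\{\text{gates at }\widetilde v_i\}\le 3$ and $\val_T(p_i)=3$. Since $p_1,\dots,p_m$ exhaust all $F_r$-orbits of branch-points, every branch-point of $T$ is $3$-valent. In particular $\ind_{geom}(T)=\sum_{i=1}^m(\val_T(p_i)-2)=m$, so it remains only to show $m\le 2r-3$.

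For this I would consider the map $\Phi$ sending the orbit $[p_i]$ to its faithfully representing vertex $v_i\in \widehat G_{s'}$. It is injective: all lifts to $T_{s'}$ of a single vertex of $\widehat G_{s'}$ form one $F_r$-orbit, so by $F_r$-equivariance of $f_{s'}$ their images lie in a single orbit, and hence a vertex can faithfully represent at most one orbit of branch-points. Its image also avoids any vertex of $\widehat G_{s'}$ with fewer than $3$ gates, since faithful representation requires $\ge 3$ gates. Now $\widehat G_{s'}$, being a trivalent graph with $\pi_1\cong F_r$, has exactly $2r-2$ vertices (from $V-E=1-r$ and $3V=2E$). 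The key point --- and the step I expect to take the most care --- is that $\widehat G_{s'}$ always has a vertex with only $2$ gates: because $[T]\in\partial\cv$ while $\widehat G_{s'}\in\cv$, the optimal morphism $f_{s'}\colon T_{s'}\to T$ is not a local isometry (otherwise, being an equivariant morphism, it would be an isometry onto the minimal tree $T$, forcing $T\in\widehat\cv$), equivalently the greedy folding path, defined on $J=[0,M)$, has not terminated at $s'$, so some turn of $\widehat G_{s'}$ is being folded; at a trivalent graph the endpoint of such a turn has exactly $2$ gates. This vertex lies outside the image of $\Phi$, so $m\le (2r-2)-1=2r-3<2r-2$, and therefore $\ind_{geom}(T)=m<2r-2$, i.e.\ $T$ is nongeometric. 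Everything apart from producing this $2$-gate vertex is bookkeeping with Proposition~\ref{gates_star}, Remark~\ref{rk:rep}, and Gaboriau--Levitt finiteness.
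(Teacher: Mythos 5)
Your proposal is correct and follows the same essential strategy as the paper: Proposition~\ref{gates_star} together with Remark~\ref{rk:rep} and hypothesis~(2) is used to faithfully represent every $F_r$-orbit of branch-points at a vertex of a single trivalent graph $\widehat G_{s'}$, which immediately yields trivalence of $T$; nongeometricity then follows from a vertex count in $\widehat G_{s'}$ combined with the observation that $f_{s'}$ cannot be a local isometry since $[T]\in\partial\cv$. The only difference is organizational: the paper argues by contradiction (assuming $\ind_{geom}(T)=2r-2$ forces $2r-2$ faithfully representing vertices which, by the Euler count, exhaust $V\widehat G_{s'}$ with all turns legal, making $f_{s'}$ a local isometry), whereas you argue directly that the image of the faithfully-representing map $\Phi$ misses the $2$-gate vertex produced by the illegal turn, so $m\le 2r-3$. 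Your version makes explicit the injectivity of $\Phi$ (distinct branch-point orbits give distinct representing vertices), which the paper's count uses implicitly when it identifies $V\widehat G_s$ with $\{v_1,\dots,v_{2r-2}\}$; this is a nice touch but does not change the underlying argument.
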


\begin{proof}

Let $p\in T$ be a branch-point. Then by Proposition~\ref{gates_star} there exists some $s\in J$ and a vertex $x_0\in \widehat G_s$ such that $x_0$ faithfully represents the $F_r$-orbit of $p$. Thus $\val_T(p)=k\ge 3$, and $\widehat G_s$ has exactly $k$ gates at $x_0$ for $\mathcal T_s$. By condition (2), there exists some $s'>s$ in $J$ such that the graph $\widehat G_{s'}$ is trivalent. Then, by Remark~\ref{rk:rep}, $x_0'=\wh g_{s,s'}(x_0)\in \widehat G_{s'}$ is also a vertex with $k\ge 3$ gates that faithfully represents the $F_r$-orbit of $p$, and thus $k\le \deg_{\widehat G_{s'}}(x_0')$. Since $\widehat G_{s'}$ is trivalent, it follows that $k=3$. Thus $T$ is trivalent, as required.

We now claim that $T$ is nongeometric.
Suppose on the contrary that $T$ is geometric. Then the geometric index of $T$
is equal to $2r-2$.

Since $T$ is trivalent, and every $F_r$-orbit trivalent branch-point contributes $3-2=1$ to the geometric index of $T$,
this means that $T$ has exactly $2r-2$ $F_r$-orbits of branch-points, each of valency $3$. Let $p_1,p_2,\dots,p_{2r-2}\in T$ be representatives of these  $2r-2$ $F_r$-orbits of branch-points in $T$.

By applying Proposition \ref{gates_star}, Remark~\ref{rk:rep} and assumption (2), we can find a big enough $s\in J$ such that $\widehat G_s$ is trivalent and such that for every $i=1,\dots,2r-2$ there exists a vertex $v_i$ in $\widehat G_s$
which faithfully represents the $F_r$-orbit of $p_i$ and has exactly $3$ gates for $\mathcal T_s$. The Euler characteristic count for $\widehat G_s$ gives us  $\sum_v [(\deg(v)/2-1]=r-1$.
 We also have $\sum_{i=1}^{2r-2} [\deg(v_i)/2-1]=(2r-2)(1/2)=r-1$, which implies that $\widehat G_s$ has no other vertices and
that $V\widehat G_s=\{v_1,\dots,v_{2r-2}\}$.  Since each $v_i$ has degree $3$ and has $3$ gates in $\widehat G_s$, it follows that all non-degenerate turns at $v_i$ are legal for $i=1,\dots, 2r-2$, so that  all non-degenerate turns in $\widehat G_s$ are legal for $\mathcal T_s$. This means that $f_s \colon T_s\to T$ is locally injective, and hence an isometry, contradicting the assumption that $[T]\in \partial \cv$.
Thus $T$ is nongeometric, as claimed.
\end{proof}

The following lemma characterizes, for an eventually legalizing isometric folding line, how different vertices of $\widehat G_s$ can represent branch-points of $T$ belonging to the same $F_r$-orbit.

\begin{lem}\label{lem:br-orb}
Let $T$, $f \colon T_0\to T$, and $(\widehat G_s)_{s\in J}$  be as in Convention~\ref{conv:foldline}. Assume that the greedy isometric folding path $(\widehat G_s)_{s\in J}$ is eventually legalizing. Let $s\in J$ and let $x,y\in T_s$ be vertices with $\ge 3$ gates which are respectively lifts of vertices $x_0,y_0\in \widehat G_s$.
Let $p=f_s(x),q=f_s(y)\in T$ (so that, by Lemma~\ref{lem:val}, $p$ and $q$ are branch-points of $T$).
Then the following are equivalent:
\begin{enumerate}
\item We have $F_r p=F_rq$.
\item There exists some $s'>s$ in $J$ such that $\wh g_{s,s'}(x_0)=\wh g_{s,s'}(y_0)$.
\item There exists some $s'>s$ in $J$ and an immersed path $\gamma$ from $x_0$ to $y_0$ in $\widehat G_s$ such that the tightened image $[\wh g_{s,s'}(\gamma)]$ of $\gamma$ in $\widehat G_{s'}$ is a trivial path.
\end{enumerate}
\end{lem}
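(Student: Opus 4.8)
The plan is to prove the three conditions equivalent by showing $(2)\Rightarrow(3)\Rightarrow(1)\Rightarrow(2)$, using the eventually legalizing hypothesis crucially in the last implication and the behavior of folding maps throughout.

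\smallskip

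\textbf{$(2)\Rightarrow(3)$.} This is essentially immediate. If $\wh g_{s,s'}(x_0)=\wh g_{s,s'}(y_0)$ for some $s'>s$ in $J$, then pick any immersed path $\gamma$ from $x_0$ to $y_0$ in $\widehat G_s$ (which exists since $\widehat G_s$ is connected; if $x_0=y_0$ take the trivial path, in which case $p=q$ and $(3)$ holds with the trivial path as well, noting $F_rp=F_rq$ trivially so all three conditions hold). The image $\wh g_{s,s'}(\gamma)$ is then a loop based at $\wh g_{s,s'}(x_0)$ in $\widehat G_{s'}$; I should note that I want its \emph{tightened} form to be trivial, i.e. that $\wh g_{s,s'}(\gamma)$ is null-homotopic rel endpoints. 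This need not hold for the chosen $\gamma$, so instead I should argue as follows: lift to trees. Choose lifts $x,y$ of $x_0,y_0$ and let $\gamma$ be the geodesic $[x,y]_{T_s}$, projecting to an immersed path $\bar\gamma$ in $\widehat G_s$. Since $\wh g_{s,s'}(x_0)=\wh g_{s,s'}(y_0)$, the morphism $f_{s,s'}\colon T_s\to T_{s'}$ sends $x$ and $y$ into the same $F_r$-orbit; after adjusting $y$ by a deck transformation (which changes $\bar\gamma$ to another immersed path with the same endpoints) we may assume $f_{s,s'}(x)=f_{s,s'}(y)$. Then the tightened image of $[x,y]_{T_s}$ under $f_{s,s'}$ is the trivial path at $f_{s,s'}(x)$, and projecting down gives $(3)$.

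\smallskip

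\textbf{$(3)\Rightarrow(1)$.} Suppose $\gamma$ is an immersed path from $x_0$ to $y_0$ in $\widehat G_s$ with $[\wh g_{s,s'}(\gamma)]$ trivial in $\widehat G_{s'}$. Lift: choose a lift $\tilde\gamma=[x,y']_{T_s}$ where $y'$ is a lift of $y_0$. Since the tightened image of $\gamma$ is trivial, $f_{s,s'}$ collapses $[x,y']_{T_s}$, so $f_{s,s'}(x)=f_{s,s'}(y')$. Applying $f_{s'}$ and using the compatibility $f_{s'}\circ f_{s,s'}=f_s$, we get $f_s(x)=f_s(y')$ in $T$. Since $y'$ is a lift of $y_0$, it equals $gy$ for some $g\in F_r$, so $f_s(x)=f_s(gy)=g f_s(y)$, i.e. $p=gq$, hence $F_rp=F_rq$. (If $x_0=y_0$ the statement is trivial.)

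\smallskip

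\textbf{$(1)\Rightarrow(2)$.} This is the main obstacle and the only place the eventually legalizing hypothesis is used. Suppose $F_rp=F_rq$, say $p=gq$ for some $g\in F_r$; replacing the lift $y$ by $g^{-1}y$ (which projects to the same vertex $y_0$), we may assume $f_s(x)=f_s(y)=p$. Now consider the geodesic $\gamma_0=[x,y]_{T_s}$ and its projection to an immersed path $\gamma$ in $\widehat G_s$ from $x_0$ to $y_0$. Since the folding path is eventually legalizing, there exists $s'>s$ in $J$ such that the tightened image $[\wh g_{s,s'}(\gamma)]$ is legal for $\mathcal T_{s'}$; lifting, the tightened image $\omega$ of $\gamma_0$ under $f_{s,s'}$ is a legal path in $T_{s'}$ from $\wh g_{s,s'}(x_0)$-lift to $\wh g_{s,s'}(y_0)$-lift — more precisely $\omega=[f_{s,s'}(x),f_{s,s'}(y)]_{T_{s'}}$ and it is legal for $\mathcal T_{s'}$. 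Because $\omega$ is legal and $f_{s'}\colon T_{s'}\to T$ is an optimal morphism whose pullback gate structure is $\mathcal T_{s'}$, the restriction of $f_{s'}$ to $\omega$ is \emph{injective}. But $f_{s'}(f_{s,s'}(x))=f_s(x)=p=f_s(y)=f_{s'}(f_{s,s'}(y))$, so $f_{s'}$ sends the two endpoints of $\omega$ to the same point $p$; injectivity on $\omega$ then forces $\omega$ to be the degenerate path, i.e. $f_{s,s'}(x)=f_{s,s'}(y)$. Projecting to $\widehat G_{s'}$ gives $\wh g_{s,s'}(x_0)=\wh g_{s,s'}(y_0)$, which is exactly $(2)$.

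\smallskip

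The key technical point throughout is the interplay: \emph{legal paths are mapped injectively by $f_{s'}$} (since $\mathcal T_{s'}$ is the pullback gate structure of $f_{s'}$), combined with the compatibility $f_{s'}\circ f_{s,s'}=f_s$. The eventually legalizing hypothesis converts the a priori possibility that $[x,y]_{T_s}$ "wraps around" and is folded in a complicated way into the clean statement that after enough folding it becomes legal, and legality plus "same image under $f_{s'}$" forces collapse. I expect $(1)\Rightarrow(2)$ to require the most care in bookkeeping the lifts and deck transformations, but no deep new idea beyond Definition~\ref{df:*} and the defining property of pullback gate structures.
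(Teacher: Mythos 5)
Your proof is correct and follows essentially the same approach as the paper's: both rely on the compatibility $f_{s'}\circ f_{s,s'}=f_s$, the fact that $f_{s'}$ is injective on $\mathcal T_{s'}$-legal paths (since $\mathcal T_{s'}$ is the pullback gate structure), and the eventually legalizing hypothesis to produce a legal tightened image that is forced to collapse. The only difference is that the paper closes the cycle as $(3)\Rightarrow(2)\Rightarrow(1)\Rightarrow(3)$ (making $(3)\Rightarrow(2)$ trivial and using legalizing in $(1)\Rightarrow(3)$), whereas you run $(2)\Rightarrow(3)\Rightarrow(1)\Rightarrow(2)$, which shifts the small lifting argument into $(2)\Rightarrow(3)$ and uses legalizing in $(1)\Rightarrow(2)$; the substance is identical.
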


\begin{proof}
Note that (3) directly implies (2).
And (2) implies (1) as follows.
Assume that (2) holds and that $z_0=\wh g_{s,s'}(x_0)=\wh g_{s,s'}(y_0)$. Recall that we are also given a lift $f_{s',s} \colon T_s\to T_{s'}$ of $\wh g_{s,s'}$ such that $f_s=f_{s'}\circ  f_{s',s}$. Then $z_1=f_{s,s'}(x)$ and $z_2=f_{s,s'}(y)$ are both lifts of $z_0=\wh g_{s,s'}(x_0)=\wh g_{s,s'}(y_0)$.  We have $p=f_s(x)=f_{s'}\circ f_{s,s'}(x)=f_{s'}(z_1)$ and $q=f_s(y)=f_{s'}\circ f_{s,s'}(y)=f_{s'}(z_2)$. Since both $z_1,z_2$ are lifts of $z_0$, it follows that $z_2=wz_1$ for some $w\in F_r$. Since $p=f_{s'}(z_1)$ and $q=f_{s'}(wz_1)$ and since $f_{s'}$ is $F_r$-equivariant, we conclude that $q=wp$, and (1) holds.

Finally, suppose that (1) holds and $F_r p=F_rq$. Then there exists $w\in F_r$ such that $q=wp$.
Now $f_s(wx)=wf_s(x)=wp=q$. Let $\gamma$ be the projection to $\widehat G_s$ of the geodesic $[y,wx]_{T_s}$. Note that $f_s(y)=f_s(wx)=q$ in $T$. Since our folding path is eventually legalizing, there exists some $s'>s$ in $J$ such that the tightened path $\gamma'=[\wh g_{s,s'}(\gamma)]$ is legal in $\widehat G_{s'}$. If $\gamma'$ is a nontrivial path, then $\gamma'$ lifts to a legal immersed path of positive length from $f_{s,s'}(y)$ to $f_{s,s'}(wx)$ in $T_{s'}$ which maps isometrically by $f_{s'}$ to a path of positive length in $T$ from $f_{s'}(f_{s,s'}(y))$ to  $f_{s'}(f_{s,s'}(wx))$. This contradicts the fact that $f_{s'}(f_{s,s'}(y))=f_{s'}(f_{s,s'}(wx))=q$. Thus $\gamma'$ is a trivial path in $\widehat G_{s'}$.  Thus we have proved that (1) implies (3), completing the proof of the lemma.
\end{proof}

In the setting of Convention~\ref{conv:foldline}, for $s\in J$ let $V'_s\subseteq V\widehat G_s$ be the set of all vertices of $\widehat G_s$ with $\ge 3$ gates for $\mathcal T_s$. Define a relation $\sim_s$ on $V'_s$ by setting $v_1\sim_s v_2$ (for $v_1,v_2\in V_s'$) if and only if there exists $s'>s, s'\in J$ such that $\wh g_{s,s'}(v_1)=\wh g_{s,s'}(v_2)$ in $\widehat G_{s'}$. It is easy to see that $\sim_s$ is an equivalence relation on $V_s'$. Note that if $v_1\sim_s v_2$ and $v_1$ represents the $F_r$-orbit of a branch-point $p\in T$ then $v_2$ also represents the $F_r$-orbit of $p$, and $\val_T(p)\ge \max\{d_1,d_2\}$ where $d_i$ is the number of gates at $v_i$ in $\widehat G_s$ for $i=1,2$. However, in this situation if we also have that $v_1$ faithfully represents the $F_r$-orbit of $p\in T$, that does not necessarily imply that $v_2$ faithfully represents the $F_r$-orbit of $p\in T$ (since it may happen that the number of gates at $v_2$ is smaller than the number of gates at $v_1$). For a vertex $v\in V_s'$ we say that $v$ is \emph{maximal for $\sim_s$} if $v$ has the maximal number of gates among all vertices of $V_s'$ in the $\sim_s$-equivalence class of $v$.

\begin{cor}\label{cor:stab}
Let $T$, $f\colon T_0\to T$, and $(\widehat G_s)_{s\in J}$  be as in Convention~\ref{conv:foldline}. Assume that the greedy isometric folding path $(\widehat G_s)_{s\in J}$ is eventually legalizing. Let $p_1,\dots, p_m\in T$ be representatives of all the distinct $F_r$-orbits of branch-points.

There exists $s_0\in J$ such that for all $s\ge s_0$ with $s\in J$ the following holds:

\begin{enumerate}
\item There are exactly $m$ distinct $\sim_s$-equivalence classes in $V_s'$.

\item Let $v_1,\dots, v_m\in V_s'$ be representatives of all the distinct $\sim_s$-equivalence classes in $V_s'$, such that for each $i=1,\dots, m$ the vertex $v_i$ is maximal for $\sim_s$.
Then, up to re-ordering of $p_1,\dots, p_m$, for each $i=1,\dots, m$ the vertex $v_i$ faithfully represents the $F_r$-orbit of the branch-point $p_i$ of $T$. \end{enumerate}
In particular, if $k_i$ is the number of gates at $v_i$ in $\mathcal T_s$ then $k_i=\val_T(p_i)$ and
\[
\mathrm{ind}_{geom}(T)=\sum_{i=1}^m [k_i-2].
\]
\end{cor}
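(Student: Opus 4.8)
The plan is to bootstrap from Proposition~\ref{gates_star} and Lemma~\ref{lem:br-orb} by choosing $s_0$ large enough that every $F_r$-orbit of branch-point of $T$ is already "witnessed" faithfully along the folding path, and that the combinatorial identifications among vertices have stabilized. First I would invoke the Gaboriau--Levitt finiteness (Section~\ref{sec:index}) so that $m<\infty$, and then apply Proposition~\ref{gates_star} once to each $p_i$: for each $i$ there is a time $\sigma_i\in J$ and a vertex of $\widehat G_{\sigma_i}$ faithfully representing the $F_r$-orbit of $p_i$. Setting $s_0 = \max_i \sigma_i$ (which lies in $J$ since the $\sigma_i$ form a finite set and $J$ is an interval) and using Remark~\ref{rk:rep}, for every $s\ge s_0$ in $J$ there is, for each $i$, a vertex $w_i(s)\in\widehat G_s$ faithfully representing the $F_r$-orbit of $p_i$; in particular $V_s'$ is nonempty and meets the $F_r$-orbit data of all $p_i$.

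Next I would analyze the relation $\sim_s$. By Lemma~\ref{lem:br-orb}, for $v_1,v_2\in V_s'$ lying over vertices $x_0,y_0$, we have $v_1\sim_s v_2$ iff the two branch-points $f_s(\tilde v_1), f_s(\tilde v_2)\in T$ lie in the same $F_r$-orbit. Hence the map sending a $\sim_s$-class of $V_s'$ to the $F_r$-orbit of the represented branch-point is well-defined and \emph{injective}. It is also surjective onto $\{F_r p_1,\dots,F_r p_m\}$: each $w_i(s)$ constructed above lies in $V_s'$ and represents $F_r p_i$, and conversely Lemma~\ref{lem:val} guarantees that every vertex of $V_s'$ represents \emph{some} branch-point, hence some $F_r p_i$. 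This gives exactly $m$ equivalence classes, proving (1).

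For (2), fix a maximal representative $v_i\in V_s'$ of the class corresponding to $F_r p_i$, with $k_i$ gates. Since $v_i$ represents $F_r p_i$, Lemma~\ref{lem:val} gives $k_i\le \val_T(p_i)$. On the other hand the faithful witness $w_i(s)$ lies in the same $\sim_s$-class (it represents the same orbit) and has exactly $\val_T(p_i)$ gates; by maximality of $v_i$ we get $k_i\ge \val_T(p_i)$. Therefore $k_i=\val_T(p_i)$ and $v_i$ faithfully represents $F_r p_i$, which is (2). The index formula then follows: each $p_i$ contributes $\val_T(p_i)-2 = k_i-2$ to $\mathrm{ind}_{geom}(T)$ by the Gaboriau--Levitt definition recalled in Section~\ref{sec:index}. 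The main subtlety — really the only thing requiring care — is making sure $\sim_s$ does not over-identify or under-identify: the "only if" direction of Lemma~\ref{lem:br-orb} (equivalently, injectivity of the class-to-orbit map) is what prevents two distinct orbits from being conflated at finite time, and this is exactly where the eventually legalizing hypothesis is used, via the argument already packaged in Lemma~\ref{lem:br-orb}. Everything else is bookkeeping with the finitely many times $\sigma_i$ and the monotonicity in Remark~\ref{rk:rep}.
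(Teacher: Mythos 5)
Your proposal is correct and follows essentially the same route as the paper's proof: apply Proposition~\ref{gates_star} to each orbit $F_r p_i$, use the finiteness of $m$ together with Remark~\ref{rk:rep} to obtain a single time $s_0$ after which all orbits are faithfully witnessed, then use Lemma~\ref{lem:br-orb} (together with Lemma~\ref{lem:val}) to set up the bijection between $\sim_s$-classes and branch-point orbits, and finally use maximality to pin down the gate count of each representative. The paper is slightly terser about extracting a common $s$ from Proposition~\ref{gates_star}, but that is exactly the step you make explicit; the only blemish in your write-up is a momentary confusion of notation when you say ``$v_1,v_2\in V_s'$ lying over vertices $x_0,y_0$'' (the $v_i$ already live in $\widehat G_s$ and it is their lifts in $T_s$ that should be named), which does not affect the argument.
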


\begin{proof}
Proposition~\ref{gates_star} implies that there exists an $s\in J$ such that there are vertices $u_1,\dots, u_m\in V_s'$ where, for each $i$, we have that $u_i$ faithfully represents the $F_r$-orbit of $p_i$. Thus if $k_i$ is the number of gates at $u_i$ in $\mathcal T_s$ then $k_i=\val_T(p_i)\ge 3$ for $i=1,\dots, m$.
Since $p_1,\dots, p_m$ are in distinct $F_r$-orbits, Lemma~\ref{lem:br-orb} implies that for $i\ne j$ we have $u_i\not\sim_s u_j$. By Lemma~\ref{lem:val}, every vertex $v\in V_s'$ represents the $F_r$-orbit of some $p_i$, and therefore, by  Lemma~\ref{lem:br-orb}, $v\sim_s u_i$ for some $i$. Thus there are no other $\sim_s$-equivalence classes in $V_s'$ except the $m$ distinct classes given by $u_1,\dots, u_m$. This means that there are exactly $m$ distinct $\sim_s$-equivalence classes in $V_s'$, concluding the proof of (1).
Moreover, each $u_i$ is maximal in its $\sim_s$-equivalence class, since otherwise there would exist a vertex in $V_s'$ with $>k_i$ gates representing the $F_r$-orbit of $p_i$, contradicting the fact that $k_i=\val_T(p_i)$.
Thus the conclusion of part (2) in $V_s'$ holds for any maximal elements $v_1,\dots, v_m$ in the $\sim_s$-equivalence classes of $u_1,\dots, u_m$. Remark~\ref{rk:rep} and  Lemma~\ref{lem:br-orb} now imply that the conclusion of part (2) also holds for any $s'>s$ with $s'\in J$.
\end{proof}

Corollary~\ref{cor:stab} provides a precise abstract description of how an eventually legalizing folding path captures the geometric index and the index list for
the free $F_r$-tree $[T]\in\partial \cv$.

\section{Random folding rays and principal recurrence}
\label{sec:random_rays}

Fix a principal outer automorphism $\varphi \in \out$ with lone axis $A$ in $\cv$.

\begin{df}[Recurrent folding rays]
A geodesic folding ray $(G_t$) is \emph{$\varphi$-recurrent}, for some principal outer automorphism $\varphi$, if there is a $K \ge0$ such that for any $L\ge 0$, the ray $(G_t$) has a subsegment that $K$-fellow travels an $\out$-translate of $A$ for length at least $L$.

We also say that $(G_t$) is \emph{principally recurrent} if it is $\varphi$-recurrent for some principal $\varphi \in \out$.
\end{df}

The main proposition of this section is the following. It is deduced from facts about random walks on groups acting on hyperbolic space (mainly results of Maher--Tiozzo \cite{MaherTiozzo}) and the bounded geodesic image property for translates of the axis $A$, a result previously established by the authors \cite{KMPT}.

\begin{proposition} \label{prop:random_recurrent}
Suppose that $\mu$ is as in Theorem \ref{th:npr} and that $\varphi^{-1}$ is in the semigroup generated by the support of $\mu$. Let $\nu$ be the corresponding hitting measure on $\partial \cv$ (see Theorem \ref{th:npr}). Then for $\nu$ almost every tree $T\in\partial\cv$ and any geodesic folding ray $(G_t)$ converging to $T$, we have that $(G_t)$ is $\varphi$-recurrent.
\end{proposition}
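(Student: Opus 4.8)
The plan is to combine a positive-drift/sublinear-tracking statement for the random walk on the free factor graph $\mathcal{FF}$ with the bounded geodesic image property for translates of $A$ established in \cite{KMPT}. First I would recall that since $\varphi^{-1}$ lies in the semigroup generated by $\supp(\mu)$, the random walk $(\omega_n)$ almost surely has infinitely many \emph{$\varphi$-progress} times: informally, times $n$ at which the sample path makes a long excursion along a translate $w_n A$ of the principal axis, where the translates involved are distinct. This is the standard consequence of Maher--Tiozzo's results on random walks on groups acting on hyperbolic spaces -- positivity of drift in $\mathcal{FF}$, sublinear tracking of the sample path by the limiting geodesic, and the persistence of any fixed sub-word of $\mu^{*N}$ in the sample path -- applied to the fact that $\varphi$ acts loxodromically on $\mathcal{FF}$. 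Concretely, for each $L$ the word $\varphi^{-N}$ (for $N$ chosen so that $\varphi^{-N}A$ fellow travels $A$ for length $\geq L$) appears as a sub-word of the sample path infinitely often with probability one, and by sublinear tracking these occurrences correspond to times at which the geodesic folding ray $(G_t)$ toward $T = \mathrm{bnd}(\omega)$ passes uniformly close to the corresponding translate of $A$.

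The second ingredient is the bounded geodesic image (BGI) property for translates of $A$: there is a uniform constant $B$ (depending only on $\varphi$) such that if a geodesic segment in $\cv$ projects to a path in $\mathcal{FF}$ of diameter $> B$ that stays close to the image $\pi(A)$ in $\mathcal{FF}$, then that segment itself $K$-fellow travels a translate of $A$ in $\cv$ for a commensurate length, where $K = K(\varphi)$ is uniform. This is precisely the content of the BGI-type result proved for principal axes in \cite{KMPT}. Using this, I would convert each of the infinitely many $\varphi$-progress excursions of the sample path in $\mathcal{FF}$ into an actual $K$-fellow-traveling subsegment of the folding ray $(G_t)$ in $\cv$, of length tending to infinity as $L \to \infty$. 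Since $K$ is the same uniform constant at every excursion and the fellow-traveled lengths are unbounded, this verifies the definition of $\varphi$-recurrence for $(G_t)$.

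Finally I would address the issue that the proposition concerns an \emph{arbitrary} geodesic folding ray converging to $T$, not the specific ray produced by the random walk construction. For this, I would invoke (i) that any two geodesic folding rays with the same endpoint $T \in \mathcal{UE}$ are uniformly Hausdorff-close in $\cv$ -- this uses that $T$ is free, arational and uniquely ergodic (Proposition \ref{prop:folds_to}), together with the hyperbolicity of $\mathcal{FF}$ and Morse-type stability of folding geodesics -- and (ii) that $\varphi$-recurrence with constant $K$ is stable under replacing a ray by another ray that is uniformly close to it, at the cost of enlarging $K$ by a uniform amount. Thus recurrence of one folding ray to $T$ implies recurrence of all of them.

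The main obstacle I anticipate is step three: carefully justifying that all geodesic folding rays converging to a uniquely ergodic arational $T$ stay within uniformly bounded symmetric distance of one another, and transferring the fellow-traveling of one ray with $w_nA$ to the others without losing uniformity of the constant $K$. The passage from progress in $\mathcal{FF}$ to genuine fellow traveling in $\cv$ via BGI, and the bookkeeping that the relevant translates $w_nA$ are genuinely distinct (so that the excursions are not all concentrated near one translate), will also require care, though these are essentially the arguments already developed in \cite{KMPT}.
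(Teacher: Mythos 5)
Your overall plan, i.e., find long excursions along translates of the axis in the free factor graph $\FF$, then upgrade to genuine fellow traveling in $\cv$ via the bounded geodesic image property from \cite{KMPT}, is the same as the paper's, and your description of the BGI conversion step matches what the paper does. However there are two places where the detail you give does not actually close the argument.

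First, the mechanism you propose for producing the $\FF$-excursions does not follow from what you cite. "Positive drift $+$ sublinear tracking $+$ persistence of the subword $\varphi^{-N}$" is not enough to conclude that the quasigeodesic to $\mathrm{bnd}(\omega)$ passes within a uniform distance $\kappa$ of the translate $w_n\alpha_\varphi$ for length $\ge L$. Sublinear tracking only bounds $d(w_n y_0, \gamma_\omega)$ by $o(n)$, which can dwarf the fixed progress $\approx N\ln\lambda$ made during a block of $\varphi^{-1}$ steps; and a block occurrence at time $n$ says nothing about the relative positions of the backward tail $(w_k)_{k\le n}$ and forward tail $(w_k)_{k\ge n+N}$, which is what controls whether the bi-infinite geodesic actually threads through $w_n\alpha_\varphi$. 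The paper's Lemma~\ref{lem:mach_hyp} replaces this heuristic by the correct mechanism: it works on the bi-infinite step space, chooses open neighborhoods $A,B\subset\partial\FF$ of the backward and forward endpoints of $\alpha_\varphi$, observes that $\nu(A)\check\nu(B)>0$ because $\varphi$ lies in the semigroup $\sm$ (using \cite[Prop.~5.4]{MaherTiozzo}), and then applies Birkhoff's pointwise ergodic theorem to the shift $S$ to produce infinitely many times $k$ with $S^k\omega\in\partial^{-1}(A\times B)$. It is precisely the event "shifted walk has its two endpoints in $A$ and $B$" that forces the geodesic $\gamma_\omega$ to $\kappa$-fellow travel $w_k\alpha_\varphi$ for length $L$, with constants depending only on $\delta,Q$ and not on $k$ or $\omega$. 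Your proposal needs to be tightened to something of this form; a subword-occurrence argument alone has a genuine gap.

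Second, your step addressing "arbitrary geodesic folding rays to $T$" overcomplicates and also relies on a claim the paper never uses and which you should not take for granted: that any two geodesic folding rays in $\cv$ converging to the same $T\in\mathcal{UE}$ are \emph{uniformly} Hausdorff-close in the symmetric metric. The paper sidesteps this entirely by formulating Lemma~\ref{lem:mach_hyp} so that its conclusion applies to \emph{every} $Q$-quasigeodesic ray from $x_0$ to $\eta$ in $\FF$; the reason this works is that, in the $\delta$-hyperbolic space $\FF$, any such ray has a terminal subray that fellow travels the bi-infinite geodesic $\gamma_\omega$, so the matches found for $\gamma_\omega$ are automatically inherited. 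Since the $\pi$-image of any geodesic folding ray to $T$ is a $Q$-quasigeodesic (for a uniform $Q$, by \cite[Cor.~6.5]{bf11}), this covers all folding rays at once. No uniform Morse-type closeness in $\cv$ itself is needed, and you should drop that claim from your argument rather than try to justify it.
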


We remind the reader that if $\varphi$ is principal with axis $A$ in $\cv$, then $\varphi^{-1} A(t) = A(t + \ln \lambda)$. That is, with respect to the left action on $\cv$, $\varphi^{-1}$ translates $A$ in its folding direction. \\

Before turning to the proof of Proposition \ref{prop:random_recurrent}, we briefly discuss random walks and hyperbolic spaces. The reader can find additional details in \cite{MaherTiozzo} and a similar setup in \cite{KMPT}. We assume throughout that $\mu$ is a probability measure on $G$ with finite support, although this condition is far stronger than what is needed in this section.

Now suppose we have an isometric action of a group $G$ on a
$\delta$-hyperbolic space $(X, d)$.  Recall that a
$Q$-quasigeodesic is a map $\gamma \colon I \to X$ such that
for all $s, t \in I$
\[ \tfrac{1}{Q} d( \gamma(s), \gamma(t)) - Q \leqslant \norm{t -s }
\leqslant Q d(\gamma(s), \gamma(t)) + Q. \]
We now give a definition of fellow traveling for quasigeodesics.

\begin{df}[Fellow traveling for quasigeodesics] Let $L \geqslant 0$
and $\kappa \geqslant 0$, and let $\gamma \colon I \to X$ and
$\gamma' \colon I' \to X$ be $Q$-quasigeodesics.
\begin{enumerate}
\item Let $I = [s, t]$ and $I' = [s', t']$.  We say that $\gamma$ and
$\gamma'$ $\kappa$-fellow travel if the Hausdorff distance between
$\gamma(I)$ and $\gamma'(I')$ is at most $\kappa$, and furthermore
both $d( \gamma(s), \gamma(s') ) \leqslant \kappa$ and
$d( \gamma(t), \gamma(t') ) \leqslant \kappa$.
\item We say that $\gamma$ and $\gamma'$ $\kappa$-fellow travel for
length $L$ if there exist subintervals $J \subseteq I$ and
$J' \subseteq I'$ such that $\gamma \vert_{J}$ and $\gamma \vert_{J'}$
$\kappa$-fellow travel, and furthermore the images of both $\gamma(J)$
and $\gamma'(J')$ have diameter at least $L$.
\item For a point $x = \gamma(t)$ on $\gamma$, we say that $\gamma$
and $\gamma'$ $\rho$-fellow travel for length $L$ at centered at $x$,
if there are subintervals $J \subseteq I$ and $J' \subseteq I'$, with
$t \in J$, such that $\gamma \vert_J$ and $\gamma' \vert_{J'}$
$\kappa$-fellow travel, the images of both $\gamma(J)$
and $\gamma'(J')$ have diameter at least $L$,
and, moreover, the
distance in $X$ from $x = \gamma(t)$ to each of the endpoints of
$\gamma(J)$ is at least $L/2$.
\end{enumerate}
\end{df}

We may now define what it means for two quasigeodesics to have 
an oriented match.

\begin{df}[Oriented match]
Let $\gamma \colon I \to X$ and $\gamma' \colon I' \to X$ be
quasigeodesics. We say that $\gamma$ and $\gamma'$ have an
$(L,\kappa)$--\emph{oriented match} if there is a group element 
$h\in G$ such that $\gamma$ and $h \cdot \gamma'$ $\kappa$-fellow
travel for length $L$.
\end{df}

This definition is symmetric, as if $\gamma$ and $h \cdot \gamma'$
$\kappa$-fellow travel, then $\gamma'$ and $h^{-1} \cdot \gamma'$
$\kappa$-fellow travel.

Recall that a measure $\mu$ on $G$ is nonelementary for the action $G \curvearrowright X$ if the semigroup generated by the support of $\mu$ contains $2$ loxodromic elements with distinct endpoints on $\partial X$.
Suppose that $\mu$ is a nonelementary measure for $G \curvearrowright X$
and that $\varphi \in G$ is a loxodromic in the semigroup generated by the support of $\mu$. In this setting, there is a unique $\mu$-stationary measure $\nu$ on $\partial X$, and $\nu$ is the hitting measure for the orbit of the random walk \cite[Theorem 1.1]{MaherTiozzo}. With this setup, we have the following lemma:

\begin{lemma} \label{lem:mach_hyp} %
For all $\delta \ge 0$ and all $Q \ge 1$ there is a $\kappa \ge 0$
such that the following holds: For any countable group $G$ acting on a
$\delta$-hyperbolic space $X$, with $\mu$ a nonelementary probability
measure on $G$ with finite support and hitting measure $\nu$ on
$\partial X$, then for $\nu$-almost every $\eta \in \partial X$ and
each $Q$-quasigeodesic ray $\gamma = [x_0, \eta)$ in $X$ with endpoint
$\eta$, the quasigeodesic ray $\gamma$ has, for each $L\ge 0$, an
$(L, \kappa)$--oriented match with a $Q$--quasiaxis $\alpha_\varphi$
of $\varphi$.

\end{lemma}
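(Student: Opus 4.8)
The plan is to reduce the statement to a known recurrence property of random walks on hyperbolic spaces, applied to the action of $G$ on $X$, and then to upgrade "the sample path passes near translates of a quasiaxis infinitely often" to the quantitatively uniform "oriented match of every prescribed length $L$." The key input is that $\varphi$ is loxodromic and lies in the semigroup generated by $\supp(\mu)$, together with the fact (from \cite{MaherTiozzo}) that for a nonelementary $\mu$, almost every sample path converges to a boundary point $\eta$ and tracks a quasigeodesic; and crucially that $\nu$ is exactly the hitting measure, so $\nu$-typical boundary points are realized as limits of sample paths.

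First I would fix $\delta$ and $Q$ and extract the constant $\kappa$: by the Morse lemma in $\delta$-hyperbolic spaces, any two $Q$-quasigeodesics with the same pair of endpoints (on $X$ or on $\partial X$) are within Hausdorff distance $R = R(\delta,Q)$ of each other, and any two $Q$-quasigeodesic rays converging to the same $\eta \in \partial X$ are eventually $R$-close. Set $\kappa$ to be $R$ plus a uniform additive fudge accounting for reparametrization of quasigeodesics. This makes the conclusion independent of which $Q$-quasigeodesic ray $\gamma = [x_0,\eta)$ one picks and which $Q$-quasiaxis $\alpha_\varphi$ one picks: it suffices to produce an oriented match of length roughly $L + 2R$ for \emph{one} such pair.

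Next, the core probabilistic step. Consider the sample path $(w_n x_0)$. Since $\varphi$ is loxodromic and in $\smgp$, and $\mu$ is nonelementary, a standard argument (conditioning on the increments, using that the event of a long run that "looks like $\varphi^k$" up to bounded error has positive probability at each stage, together with a Borel--Cantelli / ergodicity argument along the lines of \cite{MaherTiozzo} or the analogous reasoning in \cite{KMPT}) shows that almost surely, for every $L$, the sample path geodesic (equivalently, a $Q$-quasigeodesic from $x_0$ to $\mathrm{bnd}(\omega) = \eta$) contains a subsegment that $\kappa$-fellow travels some $G$-translate $h\alpha_\varphi$ of the axis of $\varphi$ for length at least $L$, \emph{with matching orientation} — the orientation is automatic because the run in the sample path translates along $\alpha_\varphi$ in the positive $\varphi$-direction. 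Pushing this forward under $\mathrm{bnd}$, and using that $\nu = \mathrm{bnd}_*\PP$, we conclude that for $\nu$-almost every $\eta$ the quasigeodesic $[x_0,\eta)$ has an $(L,\kappa)$-oriented match with $\alpha_\varphi$ for every $L$. Finally, the Morse-lemma reduction from the first step transfers this to \emph{every} $Q$-quasigeodesic ray with endpoint $\eta$ and \emph{every} $Q$-quasiaxis of $\varphi$, at the cost of enlarging $\kappa$ by $O(R)$, which we absorbed into the choice of $\kappa$.

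The main obstacle is the probabilistic step: making precise that almost every sample path contains arbitrarily long oriented runs shadowing a translate of $\alpha_\varphi$. The cleanest route is to invoke the relevant statement from \cite{MaherTiozzo} (persistence/recurrence of loxodromic elements in the semigroup: positive speed plus the fact that the set of elements whose axis is shadowed has positive asymptotic frequency), or, failing a black-box citation, to run the following argument: by nonelementarity the increments can be decorated so that with probability bounded below an independent block of the walk equals a fixed power $\varphi^{k(L)}$ producing a run of length $\ge L$; by indistinguishability of the walk and the ergodic theorem applied to the shift on path space (or a direct second-moment / Borel--Cantelli estimate, using that the walk makes linear progress in $X$ so such blocks are genuinely visible in the geodesic to $\eta$ rather than backtracked), such blocks occur infinitely often almost surely. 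Orientation is not an obstacle — it is inherited from the direction of translation of $\varphi$ along its own axis — and independence of $\gamma$ and of $\alpha_\varphi$ is handled entirely by the Morse lemma as above.
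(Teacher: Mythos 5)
Your overall architecture is correct: fix $\kappa$ via the Morse lemma so that the choice of $Q$-quasigeodesic ray $[x_0,\eta)$ and of $Q$-quasiaxis $\alpha_\varphi$ is immaterial, then argue that a.e.\ sample path quasigeodesic has arbitrarily long oriented matches, and push forward under $\mathrm{bnd}$. You also correctly note that orientation comes for free. And you do gesture at the right tool — ``the ergodic theorem applied to the shift on path space'' — which is what the paper uses. But the core probabilistic step is left as a gap, and the two fallback routes you sketch either need the missing structure anyway or face real obstacles.

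What the paper actually does, and what your proposal doesn't supply, is the following mechanism. One works in the \emph{two-sided} step space $(G,\mu)^\Z$ with the shift $S$, which is ergodic, and the induced two-sided path space $(G^\Z,\mathbb P)$. By Maher--Tiozzo, a.e.\ two-sided sample path has both a forward limit $\partial_+$ and a backward limit $\partial_-$, and the pair $(\partial_+,\partial_-)$ pushes $\mathbb P$ to the product $\nu\times\check\nu$ of the hitting measure with the \emph{reflected} hitting measure. Now pick open neighborhoods $A\ni\alpha_\varphi^-$ and $B\ni\alpha_\varphi^+$ in $\partial X$ so small that any bi-infinite $Q$-quasigeodesic with endpoints in $A$ and $B$ must $\kappa$-fellow travel $\alpha_\varphi$ for length $\ge L$, centered near $\alpha_\varphi(0)$; the crucial point is that $\nu(A)\,\check\nu(B)>0$, and \emph{this} is exactly where $\varphi\in\smgp$ enters, via \cite[Proposition~5.4]{MaherTiozzo}: the attracting (resp.\ repelling) fixed point of $\varphi$ lies in the support of $\nu$ (resp.\ $\check\nu$). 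Birkhoff's ergodic theorem for the shift then produces $k_i\to\infty$ with $S^{k_i}\omega\in\partial^{-1}(A\times B)$, i.e.\ infinitely many times $k$ at which the \emph{shifted} two-sided path has its two limit points close to $w_k\alpha_\varphi^\pm$, giving the desired oriented matches centered near $w_k$. Finally one intersects the full-measure sets over $L\in\Z_+$. Your proposal never articulates the role of the reflected measure $\check\nu$ or the two-sided setup, so the recurrence you assert is never actually established; ``the set of elements whose axis is shadowed has positive asymptotic frequency'' is precisely the statement whose proof is being elided.

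Your alternative ``block-matching'' route is a genuinely different argument, but it has real hazards that you dismiss too quickly. Producing a run $g_{n+1}\cdots g_{n+k}=\varphi^{k'}$ requires choosing a word in $\supp(\mu)$ representing a power of $\varphi$, and the events across different $n$ are not independent, so a naive Borel--Cantelli does not apply; some form of second-moment estimate or (again) an ergodic-theorem argument is needed. More seriously, even if such a block occurs at step $n$, it is not automatic that the geodesic from $x_0$ to $\eta$ passes near the corresponding translate of $\alpha_\varphi$ — the sample path could backtrack past that block. The paper's use of the two-sided path space, requiring $\partial_-(S^k\omega)\in A$ as well as $\partial_+(S^k\omega)\in B$, is exactly what rules out backtracking: it guarantees the translate of the axis is sandwiched between the two limit points of the (shifted) path. ``Linear progress'' alone does not give this for a fixed $n$. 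So if you want to avoid the paper's setup, you must replace this with a concrete positive-drift plus Markov-type estimate, which is more work, not less.
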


Here, a $Q$--\emph{quasiaxis} $\alpha_\varphi$ of $\varphi$ is a $Q$--quasigeodesic that $\varphi$ acts on by translation.

\begin{proof}
Consider the bi-infinite step space $(G, \mu)^\Z$.  Let
$S \colon (g_n)_{n \in \Z} \mapsto (g_{n+1})_{n \in \Z}$ be the shift
map, which acts ergodically on the step space.  Let
$w \colon (g_n)_{n \in \Z} \mapsto (w_n)_{n \in \Z}$ be the map from
the step space to the path space $(G^\Z, \mathbb{P})$, where
\[ w_n = \left\{ \begin{array}{ll}
                   g_1 g_2 \ldots g_n & \text{ for } n > 0 \\
                   g_0^{-1} g_1^{-1} \ldots g_{-n+1}^{-1} &
                                                            \text{ for } n \le 0,\end{array} \right.  \]
\noindent and $\mathbb{P}$ is the push forward of the product measure $\mu^\Z$
by $w$.  By \cite{MaherTiozzo}, almost every sample path converges in
both the forward and backward directions, giving rise to a map
$\partial = \partial_+ \times \partial_- \colon (G^\Z, \mathbb{P})
\to \partial X \times \partial X$, defined on a full measure subset of
the path space. In particular, this means that the shift map $S$ acts
ergodically on $(G^\Z, \mathbb{P})$, where
$S^k (w_n)_{n \in \Z} = (w_k^{-1} w_n)$.  Furthermore,
$\nu \times \check \nu$, the product of the hitting measure with the
reflected hitting measure, is the push forward of the path space
measure $\mathbb{P}$ under $\partial$.

Given an oriented $Q$-quasiaxis $\alpha_\varphi$, we shall write
$\alpha_\varphi^+$ and $\alpha_\varphi^-$ for its forward and backward
limit points in $\partial X$ respectively.  We shall write
$\alpha_\varphi(0)$ for a nearest point on $\alpha_\varphi$ to the
basepoint $x_0$ in $X$.  Given constants $\delta \ge 0$ and $Q \ge 0$,
there is a constant $\kappa \ge 0$, such that for any
$Q$-quasigeodesic $\alpha_\varphi$ in a $\delta$-hyperbolic space, and
any constant $L \ge 0$, there are open sets $A$ and $B$ in
$\partial X$, with $\alpha_\varphi^- \in A$ and
$\alpha_\varphi^+ \in B$ such that any bi-infinite $Q$-quasigeodesic
$\gamma$, with one endpoint in $A$ and the other in $B$,
$\kappa$-fellow travels length at least $L$ with the quasigeodsic
$\alpha_\varphi$, centered at $\alpha_\varphi(0)$.  Furthermore, the
distance between $\alpha_\varphi(0)$ and the closest point on $\gamma$
to the basepoint $x_0$ is bounded in terms of $\delta$ and $Q$.

We shall write $\gamma_\omega$ to denote a bi-infinite
$Q$-quasigodesic connecting the forward and backward limit points of
$(w_nx_0)_{n \in \Z}$.  If $S^k (w_n)_{n \in \Z}$ lies in
$\partial^{-1}(A \times B)$, then there is a subsegment of
$\gamma_\omega$ of length $L$, centered at the nearest point
projection of $w_k$ to $\gamma_\omega$, which fellow travels with
$w_k \alpha_\varphi$.  As $\varphi$ lies in the semigroup generated by
the support of $\mu$, by \cite[Proposition 5.4]{MaherTiozzo},
$\nu \times \check \nu (A \times B) = \nu(A) \check \nu(B)$ is
strictly positive.  In particular, $\partial^{-1}(A \times B)$ is
positive.  Therefore, by Birkhoff's pointwise ergodic theorem, the
proportion of integers $1 \le k \le N$ such that $S^k (w_n)_{n \in Z}$
lies in $\partial^{-1}(A \times B)$ converges to $\nu(A)\check \nu(B)$
as $N \to \infty$.  In particular, there is a sequence of integers
$k_i \to \infty$ such that $S^{k_i} (w_n)_{n \in \Z}$ lies in
$\partial^{-1}(A \times B)$, and as $(w_n)_{n \in \Z}$ converges to
$\partial_+(w_n)_{n \in \Z}$, this means that there are infinitely
many disjoint subintervals of $\gamma_\omega$ which
$\kappa$-fellow travel with a translate of $\alpha_\varphi$ for length $L$.  
The same property now follows for $Q$-quasigeodesic rays starting at $x_0$
and converging to
$\partial_+(w_n)_{n \in \Z}$, as every such ray has an infinite
terminal subray which fellow travels with $\gamma_\omega$.

So we have shown that for some $\kappa \ge0$ and any $L\ge0$,
the set of $\eta \in \partial X$
for which any $Q$-quasigeodesic ray $\gamma = [x_0, \eta)$ has an $(L,\kappa)$-oriented match with $\alpha_\varphi$ has $\nu$ measure $1$. Intersecting these sets over all $L \in \mathbb{Z}_+$, we see that the set of $\eta \in \partial X$ such that every $Q$-quasigeodesic ray $\gamma = [x_0, \eta)$ has an $(L, \kappa)$--oriented match with $\alpha_\varphi$ \emph{for every $L\ge 0$} also has $\nu$ measure $1$. This completes the proof.
\end{proof}

Now Proposition \ref{prop:random_recurrent} follows from Lemma \ref{lem:mach_hyp} and the bounded geodesic image property for translates of $A$. 

\begin{proof}[Proof of Proposition \ref{prop:random_recurrent}]
Recall that for $\nu$-a.e. tree $T \in  \partial \cv$, we have that $T$ is free, arational, and uniquely ergodic (Theorem \ref{th:npr} and Corollary \ref{cor:random_free}.) Hence, by Proposition \ref{prop:folds_to}, there exists a geodesic folding ray $(G_t)$ converging to $T$.

The $\pi$-image of any geodesic folding path in the free factor complex $\FF$ is a $Q$-unparameterized quasigeodesic, for $Q$ depending only on the rank of $F_r$ \cite[Corollary 6.5]{bf11}. Since $\varphi$ acts as a loxodromic isometry on $\FF$, at the expense of increasing $Q$, we may assume that the image $\pi(A)$ of the axis $A$ is a $Q$-quasiaxis for $\varphi$ in $\FF$.
So applying Lemma \ref{lem:mach_hyp} to the situation at hand, gives that almost surely the quasiray $\pi((G_t))$ has an $(L,\kappa)$-oriented match with $\pi(A)$ for every $L \ge 0$.

Unpacking this statement, we see that for any $L\ge 0$, there is an $h
\in \out$ such that $\pi((G_t))$ $\kappa$-fellow travels $\pi(hA)$ for
length at least $L$ in $\FF$.  Since the map $\pi \colon \cv \to \FF$ is coarsely Lipschitz \cite[Corollary 3.5]{bf11}, it suffices to show that fellow traveling of $\pi((G_t))$ and $\pi(hA)$ in $\FF$ can be lifted to uniform fellow traveling of $(G_t)$ and $hA$ in $\cv$. This follows from the bounded geodesic image property established in \cite[Theorem 7.8]{KMPT} and the rest of the argument is similar to the one given for \cite[Theorem A]{KMPT}.

In some detail, if $\pi((G_t))$ and $\pi(hA)$ fellow travel for length
$L$ sufficiently large, then the nearest point projection in $\FF$ of the path $\pi((G_t))$ to $\pi(hA)$ is roughly diameter $L$, depending only on $Q$ and the hyperbolicity constant of $\FF$. In terms of Outer space, this means that the projection of $(G_t)$ to the greedy folding axis $hA$ using the Bestvina--Feighn (see \cite{bf11}) projection $\mathrm{Pr}_{hA} \colon \cv \to hA$ has diameter no less than $cL$, for some $c \ge 0$ depending only on the rank of $F_r$.
This follows from the fact, established in \cite[Lemma 4.2]{dt17}, that $\pi \circ\mathrm{Pr}_{hA}$ is coarsely equal to $\bf{n} \circ \pi$, where ${\bf{n}} \colon \FF \to \pi(hA)$ is the nearest point projection.
Corollary 7.9 of \cite{KMPT} then implies that the path $(G_t)$ contains a subsegment that $K$-fellow travels a subsegment of $hA$ for length $cL -c_1$, for some constants $c_1, K \ge 0$ that depend only on the principal outer automorphism $\varphi$. Since this was true for any $L \ge 0$, we have that $(G_t)$ is $\varphi$-recurrent and the proof is complete.
\end{proof}

\section{Principally recurrent folding lines are eventually legalizing}
\label{sec:principal_recurrence}

In this section, we fix a principal outer automorphism $\varphi\in\out$ and denote by $A$ its lone folding axis in $\cv$.
Our goal is to show that principally recurrent folding paths
are all eventually legalizing.
This is achieved in Proposition \ref{recurrent_star}.

Our first lemma is proven in the same manner as Lemma 5.9 of \cite{KMPT}. It basically states that in the case of interest, if folding paths fellow travel for a long enough time, then they get arbitrarily close to one another.

\begin{lemma} \label{closer_longer}
If the greedy geodesic folding ray $(G_t)$ is $\varphi$-recurrent, then for any $\epsilon >0$ and any $L\ge 0$, the ray $(G_t)$ has a subsegment that $\epsilon$-fellow travels an $\Out(F_r)$--translate of $A$ for length at least $L$.
\end{lemma}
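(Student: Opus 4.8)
The plan is to bootstrap from the definition of $\varphi$-recurrence, which only guarantees fellow traveling at some fixed distance $K$, up to fellow traveling at an arbitrarily small distance $\epsilon$. The mechanism is the usual one for contracting/loxodromic-type behavior: the axis $A$ is $\varphi$-periodic, and both $(G_t)$ and the translates of $A$ are reparametrized geodesics in $\cv$ projecting to quasigeodesics in $\FF$ along which $\varphi$ acts loxodromically. So if $(G_t)$ stays within symmetric distance $K$ of a translate $hA$ for a very long stretch, then along that stretch the two paths are following (coarsely) the same quasigeodesic in $\FF$, and one can use the "flaring" or "closing up" behavior — precisely as in Lemma 5.9 of \cite{KMPT} — to conclude that on a slightly shorter central subsegment they must in fact be much closer than $K$, indeed within any prescribed $\epsilon$. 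The key inputs are: (i) the contraction/projection properties of greedy folding paths and of $A$ established in \cite{bf11} and \cite{KMPT} (bounded geodesic image, Corollary 7.9 of \cite{KMPT}); and (ii) periodicity of $A$, which lets one translate any subsegment of $A$ back into a fixed compact fundamental domain, where uniform estimates apply.

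Concretely, first I would fix the constant $K$ from $\varphi$-recurrence of $(G_t)$, and fix $\epsilon>0$ and a target length $L$. Applying recurrence with a length parameter $L'$ much larger than $L$ (how much larger to be determined), I obtain a subsegment $(G_t)_{t\in[a,b]}$ with $b-a\ge L'$ and an $h\in\out$ so that $(G_t)_{[a,b]}$ $K$-fellow travels $hA$ for length $L'$. Next I would restrict attention to the central subsegment, say $t\in[a+ (b-a)/3, b - (b-a)/3]$: the point is that away from the endpoints of the fellow-traveling region, the two geodesics cannot diverge — if $\ds(G_t, hA(s))$ were close to $K$ somewhere in the middle, then projecting to $\FF$ and using that $\pi(hA)$ is a quasiaxis for the loxodromic $\varphi$, one gets that $\pi((G_t))$ makes definite progress along $\pi(hA)$, and then the bounded geodesic image property (\cite[Theorem 7.8]{KMPT}) together with \cite[Corollary 7.9]{KMPT} forces the lift to Outer space to track $hA$ with an error that decays as the fellow-traveling length grows. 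Quantitatively this is exactly the content of \cite[Lemma 5.9]{KMPT}, so I would invoke that lemma's proof scheme verbatim, using periodicity of $A$ to get the estimates uniform in which subsegment of $A$ we land in. Choosing $L'$ large enough (depending on $\epsilon$, $K$, $\varphi$, and $r$) then yields a central subsegment of length $\ge L$ on which $(G_t)$ $\epsilon$-fellow travels $hA$.

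The main obstacle is purely bookkeeping: making the "middle of a long fellow-traveling segment is uniformly close" statement precise requires tracking how the error in the symmetric metric depends on the fellow-traveling length, and then verifying that passing to a subsegment of length $L$ still leaves enough buffer on both sides. Since the excerpt explicitly states that this lemma "is proven in the same manner as Lemma 5.9 of \cite{KMPT}," I would not re-derive the contraction estimates but rather cite them, spelling out only the reduction: $\varphi$-recurrence gives arbitrarily long $K$-fellow traveling, \cite[Lemma 5.9]{KMPT} (with $A$ in place of the axis used there, legitimate by periodicity and the fact that $\varphi$ is fully irreducible hence loxodromic on $\FF$) upgrades long $K$-fellow traveling to long $\epsilon$-fellow traveling, and since $L$ and $\epsilon$ were arbitrary this is exactly the assertion of the lemma.
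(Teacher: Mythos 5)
Your proposal misses the key mechanism of the paper's proof, and the mechanism you substitute is not actually available in this setting.

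You describe the argument as a quantitative ``flaring'' or ``closing up'' estimate: if $(G_t)$ $K$-fellow travels $hA$ for a very long stretch, then on a central subsegment it must be $\epsilon$-close, with $\epsilon$ decaying as the fellow-traveling length grows. But the Lipschitz metric on $\cv$ does not support such a uniform contraction principle in general: two geodesic folding paths can share endpoints (in $\overline\cv$) and still stay a bounded positive symmetric distance apart forever, so ``fellow traveling at distance $K$ for a long time'' does not by itself force the distance to shrink below $K$ in the middle. What actually makes the lemma work is the \emph{lone axis} property of $A$ (Lemma~\ref{lem:principal}(1)), which your write-up never invokes. The paper's argument is a compactness/limit argument, not a quantitative contraction: using periodicity of $A$, the arbitrarily long $K$-fellow-traveling segments given by $\varphi$-recurrence are translated (by elements $h_i\in\out$) to be centered at a fixed parameter $t_0$ of $A$; by the Arzel\`a--Ascoli-type compactness for folding lines passing through a fixed compact region (this is the content of \cite[Lemma~6.11]{br} and of the cited \cite[Lemma~5.9]{KMPT}), a subsequence of the translated rays $h_i(G_t)$ converges uniformly on compacta to a bi-infinite greedy folding line $B$ at bounded distance from $A$. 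Then $B$ has the same endpoints $[T_\pm]$ in $\partial\cv$ as $A$, and \emph{only then} the lone axis property forces $B=A$ up to reparameterization. Uniform convergence on compacta finally gives the $\epsilon$-fellow traveling for any prescribed $\epsilon$ and $L$.

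So the gap is concrete: your argument replaces the compactness-plus-lone-axis step with a flaring estimate that has no justification here, and you would be unable to conclude $\epsilon$-closeness without identifying the limiting folding line with $A$ itself. The bounded geodesic image results of \cite{KMPT} that you cite are used in the paper to establish $\varphi$-recurrence (Proposition~\ref{prop:random_recurrent}), not to upgrade $K$-fellow traveling to $\epsilon$-fellow traveling; the upgrade genuinely requires the rigidity of the lone axis.
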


\begin{proof}
Using the periodicity of $A$ and $\varphi$-recurrence of $(G_t)$, we can find a $t_0 \in \mathbb{R}$ and a sequence of $h_i \in \out$ so that the rays $h_i(G_t)$ $K$-fellow travel 
the restriction of $A$ to the interval $[t_0 - L_i, t_0+L_i]$ for length $2L_i$. Here, we choose  $L_i \to \infty$ as $i \to \infty$. Up to reparameterizing the geodesic ray $h_i(G_t)$ by translation, we can assume  that $\ds(h_iG_{t_0}, A(t_0)) \le K$.

Then, just as in the proof of Lemma 5.9 of \cite{KMPT}, the sequence $h_i  (G_t)$ has a subsequence that converges uniformly on compact sets to a greedy folding line $B$ which has bounded distance from $A$ (see also \cite[Lemma 6.11]{br}). In particular, $B$ has the same limit points in $\partial \cv$ as $A$ (as in Lemma \ref{lem:principal}.1). This is to say that $B$ is a folding line from the repelling tree to the attracting tree of $\varphi$ and so since $\varphi$ is a lone axis outer automorphism we have that $B = A$, after reparameterizing. Since the convergence to $A$ along the subsequence is uniform on compact sets, we conclude that for any $\epsilon, L\ge 0$ there is an $i \ge0$ so that $h_i(G_t)$ $\epsilon$-fellow travel 
the restriction of $A$ to $[t_0 - L, t_0+L]$ for length $2L$. This completes the proof.
\qedhere
\end{proof}

The main result of this section is the following proposition.

\begin{proposition} \label{recurrent_star}
Suppose that the greedy geodesic folding ray $(G_t)$ in $\cv$ is $\varphi$-recurrent. Then $(G_t)$ is eventually legalizing.

\end{proposition}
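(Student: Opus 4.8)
The plan is to leverage Proposition~\ref{prop:same_fold} and Lemma~\ref{cor:principal_legal} to legalize an arbitrary immersed path by pushing it through a long fellow-traveling stretch with the axis $A$. Fix $s\in J$ and an immersed finite path $\gamma$ in $\widehat G_s$; I must produce $s'>s$ such that $[g_{s,s'}(\gamma)]$ is legal. First I would observe that, since $(G_t)$ is a greedy geodesic folding ray converging to a (free) tree in $\partial\cv$, the image $[g_{s,s'}(\gamma)]$ in $\widehat G_{s'}$ has bounded length in the $\cv$-metric only after rescaling; the key quantity to track is the combinatorial length of $\gamma$ in the train track structure. Using Lemma~\ref{closer_longer}, for any prescribed $\epsilon$ and $L$ the ray $(G_t)$ has a subsegment $(G_t)_{t\in[t_1,t_2]}$ with $t_2-t_1\ge L$ that $\epsilon$-fellow travels an $\out$-translate $A'=hA$; I would take $\epsilon\le\min(\epsilon_0,\epsilon_1)$ from Proposition~\ref{prop:same_fold} and Lemma~\ref{lem:open_balls}, and take $L$ large (to be determined, large enough that after applying $K_0$ from Proposition~\ref{prop:same_fold} and the legalization delay $D$ from Lemma~\ref{cor:principal_legal} there is still room to spare).

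The core mechanism: because $(G_t)$ and $A'$ $\epsilon$-fellow travel with $\epsilon\le\epsilon_1$, Lemma~\ref{lem:open_balls} lets me find a parameter where $A'(x)$ — and hence the nearby $G_t$ — is trivalent and deep in a maximal simplex, so the hypotheses of Proposition~\ref{prop:same_fold} are met: the rescaling homeomorphism $\phi_x\colon A'(x)\to G_t$ matches folded turns with folded turns, hence legal turns with legal turns. Thus it suffices to legalize (the appropriate translate of) $\gamma$ along the \emph{axis} $A'$, which is exactly what Lemma~\ref{cor:principal_legal} provides: a conjugacy class, or more generally a path of bounded length, becomes legal along $A'$ after a uniformly bounded additional time $D$. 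I would apply this to the loop obtained by doubling $\gamma$ (or to each of finitely many bounded-length loops whose legality forces legality of $\gamma$), noting that the image of $\gamma$ in $\widehat G_{t_1}$ already has combinatorial length bounded in terms of the (fixed) combinatorial length of $\gamma$ in $\widehat G_s$, since $g_{s,t_1}$ does not increase combinatorial length of tightened paths. Once the relevant loops are legal at a trivalent graph $A'(x_0)$ along $A'$ inside the fellow-traveling window, the structure-preserving homeomorphism $\phi_{x_0}$ transports this legality to the corresponding $G_t$, and then legality persists for all later times along the folding ray (tightened images of legal paths stay legal under folding maps of a greedy folding path). Setting $s'$ to correspond to this $t$ completes the argument.

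The main obstacle, and the point requiring care, is the bookkeeping that makes ``$L$ large enough'' precise: I need the fellow-traveling length $L$ to exceed $K_0$ (so Proposition~\ref{prop:same_fold} applies at all), plus the legalization delay for a loop whose length is controlled by $\ell(\gamma)$ (so Lemma~\ref{cor:principal_legal} applies, after noting $\ell_{A'(\text{entry})}(\cdot)\le$ const$\cdot\ell_{\widehat G_s}(\gamma)$ via the $\epsilon$-fellow-traveling distance bound), plus the extra time $D_2-D$ needed so that the relevant graph along $A'$ is trivalent with all legal turns taken. A subtlety is that $\gamma$ is a path, not a loop, so I either work with the conjugacy class of a loop obtained by concatenating $\gamma$ with a legal return path, or I directly invoke the statement that a bounded-length path becomes legal after bounded time (which follows from the no-periodic-Nielsen-path input of \cite[Proposition 4.11]{stablestrata} and \cite[Proposition 3.1]{bf94} used in Lemma~\ref{cor:principal_legal}) — I would phrase it so only the loop version of Lemma~\ref{cor:principal_legal} is needed, using that legality of the doubled loop implies legality of $\gamma$. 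Finally, I must confirm that the translate $h$ matching $(G_t)$ to $A'=hA$ is consistent with applying $\phi_{x_0}$: this is automatic since $\phi_{x_0}$ is defined precisely when $A'(x_0)$ and $G_t$ lie in a common open simplex, which the $\epsilon$-fellow-traveling with $\epsilon\le\epsilon_0$ guarantees along the whole window by the first line of the proof of Proposition~\ref{prop:same_fold}.
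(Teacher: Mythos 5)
Your overall plan---use $\varphi$-recurrence via Lemma~\ref{closer_longer} to produce a long $\epsilon$-fellow-traveling window with a translate $A'=hA$ of the axis, invoke Lemma~\ref{lem:open_balls} and Proposition~\ref{prop:same_fold} to match train-track structures between trivalent graphs of $(G_t)$ and $A'$, and then import legality from the axis via Lemma~\ref{cor:principal_legal}---is the same skeleton as the paper's. But there is a genuine gap in how you control lengths. You assert that the tightened image of $\gamma$ ``already has combinatorial length bounded in terms of the (fixed) combinatorial length of $\gamma$ in $\widehat G_s$,'' and plan to choose the delay constant $D$ in Lemma~\ref{cor:principal_legal} accordingly. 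This is false in the sense needed: the fellow-traveling window and Lemma~\ref{cor:principal_legal} live in projectivized Outer space (volume-$1$ graphs), and along a geodesic folding ray the $\cv$-length $\ell_{G_t}(\gamma_t)$ grows exponentially with $t$ (legal candidates realize the maximal stretch $e^t$, and the number of edges crossed grows likewise). Since $D$ depends on the length bound $l$, and $l$ would depend on the a priori unknown time at which the fellow-traveling window begins, the ``choose $L$ large enough'' step cannot be carried out consistently; there is a circularity your proposal does not break.

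The missing idea is that one should \emph{not} attempt to legalize all of $\gamma_t$ at once. After the (non-increasing) number of illegal turns in $\gamma_t$ stabilizes at some time $\mathfrak{t}_0$, one decomposes $\gamma_t$ at its illegal turns into maximal legal pieces and uses the Bestvina--Feighn derivative formula \cite[Corollary~4.8]{bf11} to see that each piece is either eventually ``large'' (length $\geq 8$, growing exponentially) or remains of length $\leq 2$ for all $t\geq\mathfrak{t}_0$. The short pieces, together with the illegal turns joining them, form \emph{illegal clusters} whose total length is at most $3(N_0+1)$, a bound depending only on the original path $\gamma$ and not on $t$. One then checks (again via the derivative formula) that the alternating decomposition into long legal segments and bounded illegal clusters is a splitting, so a given cluster survives intact as a subpath under further folding. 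Such a cluster is then extended to an immersed loop of uniformly bounded length, using the fact that within the window $G_a$ has a single illegal turn once matched with a trivalent axis graph; note your suggestion of ``doubling $\gamma$'' would not produce an immersed loop. Only to this bounded-length loop does Lemma~\ref{cor:principal_legal} apply with an $l$ that can be fixed in advance. Once you isolate and extend a bounded illegal cluster in this way, the rest of your argument goes through and yields the contradiction that an illegal turn disappears.
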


\begin{proof}
Let $\gamma_0$ be an immersed path in $G_0$ and let $\gamma_t$ denote its image in $G_t$ (via the fold maps) after tightening. In general, if $p$ is any path in $G$, its tightening is denoted $[p]$. Our goal is to show that $\gamma_t$ is legal in $G_t$ for sufficiently large $t$.

Let $N$ be the number of illegal turns in $\gamma_0$, so that $N+1$ is the number of maximal legal segments of $\gamma_0$. Note that the number of illegal turns $N_t$ in $\gamma_t$ is nonincreasing in $t$ and so $N_t \le N$.
We begin by choosing $\mathfrak{t}_0 \ge 0$ sufficiently large so that for all $t \ge \mathfrak{t}_0$,
\begin{itemize}
\item $N_t = N_{\mathfrak{t}_0}$, i.e. the number of illegal turns has stabilized.
\end{itemize}
Hence, for all $t \ge \mathfrak{t}_0$ we have the decomposition
\begin{align} \label{decomp1}
\gamma_t = \gamma^0_t \cdot \ldots \cdot \gamma^{N_{\mathfrak{t}_0}}_t,
\end{align}
where the breakpoints happen exactly at the illegal turns of $\gamma_t$. In the language of Section 5 of \cite{bf11}, $\gamma_t$ has all \emph{surviving illegal turns} for the folding ray, in the sense that no illegal turns of $\gamma_t$ become legal or collide with one another while folding.
Although it is not strictly needed for what follows, this observation makes it clear how the decomposition of $\gamma_{t'}$ is obtained from the decomposition of $\gamma_{t}$ for $\mathfrak{t}_0\le t \le t'$: just consider the image of $\gamma^{i}_t$ under the folding map to $G_{t'}$ and remove initial and terminal portions of the image that cancel with portions of its neighbors. Since the number of illegal turns in $\gamma_t$ does not decease for $t\ge \mathfrak{t}_0$, these images are never canceled away.

Returning to the argument, by Corollary 4.8 of \cite{bf11},  for $s\ge t$ any legal segment $\sigma_t$ inside of $\gamma_t$ of length $L_t \ge 2$ gives rise to a legal segment $\sigma_{s}$ inside of $\gamma_s$ of length $L_s \ge 2 + (L_t-2) e^{s-t}$. (This conclusion follows from the so-called derivative formula of Bestvina--Feighn, \cite[Lemma~4.4]{bf11}.)
Hence, if at any time $\gamma^i_t$ 
has length more than $2$, then it grows exponentially thereafter.  So at the expense of making $\mathfrak{t}_0$ larger, we may additionally assume that for each $0 \le i \le N_{\mathfrak{t}_0}$ either:
\begin{itemize}
\item $\gamma^i_{\mathfrak{t}_0}$ has length at least $8$ (and hence has length $\ge 8$ for all $t \ge \mathfrak{t}_0$), or
\item $\gamma^i_t$ has length at most $2$ for all $t \ge \mathfrak{t}_0$.
\end{itemize}
We call the $\gamma^i_t$s of length greater than $8$ \emph{large} and the rest are called \emph{small}.

Note that if $N_{\mathfrak{t}_0} = 0$, then we are done. So assume that $N_{\mathfrak{t}_0} >0$.

Now for any $s \ge \mathfrak{t}_0$ we use (\ref{decomp1}) to construct another decomposition of $\gamma_s$,
\begin{align} \label{decomp2}
\gamma_s = r^1_s \cdot r^2_s \cdot \ldots \cdot r^k_s
\end{align}
for $k\le N_t$ defined as follows: for each large $\gamma_s^i$ there are two breakpoints of the decomposition (\ref{decomp2}) at vertices along $\gamma_s^i$ obtained by starting at the endpoints $\gamma_s^i$, moving inward (along $\gamma_s^i$) for length $2$ and choosing the next vertices of $\gamma_s^i$ (while continuing to move along $\gamma_s^i$).
Since the length of $\gamma_s^i$ is at least $8$ and every edge has length less than $1$, this process chooses two vertex breakpoints per large $\gamma_s^i$, and results in a decomposition of $\gamma_s$ in which each term begins and ends
with (possibly overlapping) legal segments of length at least $2$. We point out that $k-1$ is twice the number of large $\gamma^i_s$ in the initial decomposition of $\gamma_s$.

The decomposition of $\gamma_s$ given in $(\ref{decomp2})$ is a \emph{splitting} in the sense that if we denote the folding maps by $g_{s,t} \colon G_s \to G_t$, we have for $\mathfrak{t}_0\le s <t$
\[
\gamma_t = [g_{s,t}(r^1_s)] \cdot \ldots \cdot  [g_{s,t}(r^k_s)].
\]
This again follows from the formulation of the derivative formula stated above since legal segments of length at least $2$ are not completely cancelled under folding. (We warn the reader that we are not claiming that the above splitting of $\gamma_t$ is the same as the one appearing in (\ref{decomp2}) for $s=t$.)

Note that (for each $s\ge \mathfrak{t}_0$) the $r^j_s$'s alternate between legal segments (of length at least 2) and \emph{clusters} of segments of length no more than $3$ joined by illegal turns.
The total length of each illegal cluster is no more than
$3(N_{\mathfrak{t}_0}+1) \le 3N_0 +3$.
Moreover, if $r^j_s$ is an illegal cluster of $\gamma_s$, then for any $t>s$, $r^j_t$ is an illegal cluster of $\gamma_t$ and $r^j_t$ is a subpath of $[g_{s,t}(r^j_s)]$ whose complementary pieces are legal initial/terminal subpaths of $[g_{s,t}(r^j_s)]$. This fact follows directly from our construction.

Since $N_{\mathfrak{t}_0} >0$ and all illegal turns of $\gamma_s$ are contained in illegal clusters, there exists a $1\le j \le k$ such that $r^j_s$ is an illegal cluster for all $s \ge \mathfrak{t}_0$. We set $r_s =  r^j_s$ and henceforth work only with this illegal cluster. We will show that for some $\mathfrak{t}_0 <s<t$, the immersed path $[g_{s,t}(r_s)]$ is \emph{legal} in $G_t$. Since this is a subpath of $\gamma_t$, this shows that $N_t < N_{\mathfrak{t}_0}$; a contradiction that will complete the proof.

Now apply Lemma \ref{cor:principal_legal} with $l = 2(3N_0+8)$ to obtained a $D \ge 0$ so that for any $t \in \mathbb{R}$ and $h \in \out$,
any loop in $hA(t)$ of length at most $2(3N_0+8)$
becomes legal in $hA(t + D)$, after folding and tightening.
Also fix $\epsilon < \min\{\epsilon_0, \epsilon_1, \log(2)\}$ and $L \ge K_0 + D +2$, where $\epsilon_0$ and  $K_0$ are as in Proposition \ref{prop:same_fold} and $\epsilon_1$ is as in Lemma \ref{lem:open_balls}.
As $(G_t)$ is $\varphi$-recurrent, Lemma \ref{closer_longer} implies that for this $\epsilon , L \ge 0$, there is a interval (after time $\mathfrak{t}_0$) on which $(G_t)$ $\epsilon$-fellow travels $hA(t)$ (for some $h\in\out$) for length $L$. For ease of notation, set $A' = hA$.

Hence, we have obtained a subinterval $[\mathfrak{t}_1, \mathfrak{t}_1+L]$ ($\mathfrak{t}_1 \ge \mathfrak{t}_0$) such that the restriction of $(G_t)$ to this interval $\epsilon$-fellow travels $A'$ for length $L$.
 Applying Proposition \ref{prop:same_fold}, we get a subinterval $[\mathfrak{t}_1 + K_0, \mathfrak{t}_1+L] $ of length at least $D+2$ with the property that
for any $t \in (\mathfrak{t}_1 + K_0, \mathfrak{t_1}+L)$ and $s\in \mathbb{R}$
such that
\begin{itemize}
\item[(a)] $G_t$ is trivalent,
\item[(b)] $A'(s)$ is trivalent and in the same open simplex as $G_t$, and
\item[(c)] $\phi_s \colon A'(s) \to G_t$ is a homeomorphism topologically identifying these graphs,
\end{itemize}
we have that $\phi_s$ preserves the train track structures in the sense that it maps legal turns to legal turns.

We now choose points for which these conditions hold. 
Let $s_1,s_2 \in \mathbb{R}$ with $s_1 < s_2$ be such that the restriction of $A'$ to $[s_1,s_2]$ $\epsilon$-fellow travels the restriction of $(G_t)$ to $[\mathfrak{t}_1 + K_0, \mathfrak{t}_1+L]$. Note that each of these intervals has length at least $D+2$. 
Next apply Lemma \ref{lem:open_balls} to find $c,d \in [s_1,s_2]$ with $s_1< c<s_1+1$ and $s_2-1<d<s_2$ so that the symmetric $\epsilon_1$-balls about $A'(c)$ and $A'(d)$ are each contained in the interior of a maximal simplex. 
We record for later that $d - c \ge D$.
Finally, pick $a,b \in (\mathfrak{t}_1 + K_0, \mathfrak{t}_1+L)$ so that $\ds(G_a,A'(c))$ and $\ds(G_b,A'(d))$ are each less than $\epsilon$. As $\epsilon < \epsilon_1$, we have that $G_a$ and $A'(c)$ are contained in the same open simplex, as are $G_b$ and $A'(d)$. 

Let $\phi_{c} \colon A'(c) \to G_{a}$ and $\phi_{d} \colon A'(d) \to G_{b}$ be the homeomorphisms preserving the associated train track structures. Since $A'(c)$ has exactly one illegal turn (Lemma \ref{lem:principal}), the same is true for $G_{a}$.

Recall that the illegal cluster $r_{a}$ has length no more than $3N_0+3$ in $G_{a}$. Since there is only one illegal turn of $G_{a}$  we can easily `legally' extend $r_a$ to a immersed loop $\alpha_a$. By this we mean that $\alpha_a$ is an immersed loop containing $r_a$ so that the rest of $\alpha_a$ (call it $p_a$) is a legal arc of length at least $2$ which meets the endpoints of $r_a$ at legal turns. It is also easy to see that can be done in such a way that $\alpha_a$ has length no more than $5$ plus the length of $r_a$.

Let $\alpha$ be the conjugacy class of $\mathrm{F}_r$ represented by $\alpha_a$ in $G_a$ and let $\alpha_t$ denote the immersed representative of $\alpha$ in $G_t$ for $t\ge a$. Hence, $\ell_{G_a}(\alpha) \le 3N_0 +8$.

We claim that for all $t>a$, $[g_{a,t}(r_a)]$ is a subpath of $\alpha_t$ in $G_t$. This conclusion is an immediate consequence of the fact that $[g_{a,t}(\alpha_a)] = \alpha_t$ and the fact that
\[
\alpha_a = r_a \cdot p_a,
\]
is a splitting of $\alpha_a$ (as a loop). This last fact again follows from our construction and the formulation of the Bestvina--Feighn derivative formula used above.

We are now ready to complete the proof of Proposition~\ref{recurrent_star}. 
Using that $\ds(G_a,A'(c))\le \epsilon \le \log(2)$, we have that
\[
\ell_{A'(c)}(\alpha) \le 2 \ell_{G_a}(\alpha) \le 2(3N_0+8).
\]
Moreover, our choice of $D$ then gives that the immersed representative of $\alpha$ in $A'(c+D)$ is legal. Because $c+D \le d$, the immersed representative of $\alpha$ in $A'(d)$ is legal. But since the homeomorphism $\phi_d \colon A'(d) \to G_b$ maps the immersed representative of $\alpha$ in $A'(d)$ to the immersed representative of $\alpha$ in $G_b$ and preserves legality, the immersed representative of $\alpha$ in $G_b$ is also legal. This is all to say that $\alpha_b$ is a legal loop in $G_b$. Since $\alpha_b$ contains the path $[g_{a,b}(r_a)]$, this path too is legal in $G_b$. But this is exactly the contradiction we sought, and so the proof of Proposition~\ref{recurrent_star} is complete.
\qedhere

\section{Proof of the main result}\label{sec:main}

Recall that a probability measure $\mu$ on $\out$ is called \emph{nonelementary} if the subsemigroup $\sm$ of $\out$ generated by the support $\supp(\mu)$ of $\mu$ contains two independent fully irreducible elements (that is, two fully irreducible elements $\psi_1,\psi_2\in \out$ such that the subgroup $\langle \psi_1,\psi_2\rangle$ is not virtually cyclic).

We can now prove the main result of this paper (c.f. Theorem~\ref{th:main} in the introduction):

\begin{theorem}\label{thm:main}
Suppose that $r\ge3$ and let $\mu$ be a nonelementary probability measure on $\out$ with finite support such that $\varphi^{-1} \in \sm$ for some principal fully irreducible $\varphi\in\out$. Let $\nu$ be the hitting measure on $\partial \cv$ for the random walk $(\out,\mu)$
starting at some $y_0\in\cv$.

Then for $\nu$-a.e. $[T]\in \partial \cv$, the tree $T$ is trivalent and nongeometric.
\end{theorem}

\begin{proof}
By Corollary~\ref{cor:random_free} and Theorem \ref{th:npr}, for $\nu$-a.e. $[T]\in \partial \cv$, the tree $T$ is $F_r$-free and uniquely ergodic.

By Proposition~\ref{prop:folds_to}, there exists a (greedy) geodesic folding ray $(G_t)$ in $\cv$ such that $\lim_{t\to\infty} G_t=[T]$ in $\overline{\cv}$.
Proposition~\ref{prop:random_recurrent} now implies that the ray $(G_t)$ is $\varphi$-recurrent. Hence, by Proposition~\ref{recurrent_star},  the ray $(G_t)$ is eventually legalizing.
Therefore, by Theorem~\ref{th:trivalent}, the tree $T$ is trivalent and nongeometric.
\end{proof}

\begin{corollary}\label{cor:main}
Suppose that $r\ge3$ and let $\mu$ be a nonelementary probability measure on $\out$ with finite support such that 
$\sm$ contains a subgroup of finite index in $\out$.  Let $\nu$ be the hitting measure on $\partial \cv$ for the random walk $(\out,\mu)$ starting at some $y_0\in\cv$.

Then for $\nu$-a.e. $[T]\in \partial \cv$, the tree $T$ is trivalent and nongeometric.
\end{corollary}
\begin{proof}
Let $H\le \out$ be a subgroup of finite index such that $H\subseteq \sm$. By \cite[Example 6.1]{stablestrata}, there exists a principal fully irreducible $\varphi\in\out$. Then for some $m\ge 1$ we have $\varphi^m\in H$ and therefore $\varphi^{-m}\in \sm$. Hence, by Theorem~\ref{thm:main} above, the statement of the corollary follows.
\end{proof}

\end{proof}

\vskip1pt
\bibliographystyle{alpha}
\bibliography{References}

\end{document}